\newtheorem{theorem}{Theorem}
\newtheorem{lemma}{Lemma}
\newtheorem{definition}{Definition}
\newtheorem{assumption}{Assumption}
\newtheorem{remark}{Remark}
\title{ Zeroth-Order Stochastic Alternating Direction Method of Multipliers \\ for Nonconvex Nonsmooth Optimization }
\author{Feihu Huang$^{1}$, Shangqian Gao$^1$, Songcan Chen$^{2,3}$ {\normalfont and} Heng Huang$^{1,4}$\thanks{Corresponding Author.} \\ \affiliations
$^1$ Department of Electrical \& Computer Engineering, University of Pittsburgh, USA\\
$^2$ College of Computer Science \& Technology, Nanjing University of Aeronautics and Astronautics \\
$^3$ MIIT Key Laboratory of Pattern Analysis \& Machine Intelligence, China
\\
$^4$ JD Finance America Corporation\\
feh23@pitt.edu, \ shg84@pitt.edu, \ s.chen@nuaa.edu.cn, \ heng.huang@pitt.edu
}
\begin{document}

\maketitle
\begin{abstract}
Alternating direction method of multipliers (ADMM) is a popular optimization tool
for the composite and constrained problems in machine learning.
However, in many machine learning problems such as black-box learning and bandit feedback,
ADMM could fail because the explicit gradients of these problems are difficult
or even infeasible to obtain. Zeroth-order (gradient-free) methods can effectively solve these problems due to that the objective function values
are only required in the optimization. Recently, though there exist a few zeroth-order ADMM methods,
they build on the convexity of objective function.
Clearly, these existing zeroth-order methods are limited in many applications.
In the paper, thus, we propose a class of fast zeroth-order stochastic ADMM methods
(\emph{i.e.}, ZO-SVRG-ADMM and ZO-SAGA-ADMM) for solving nonconvex problems with multiple nonsmooth penalties, based on the
coordinate smoothing gradient estimator.
Moreover, we prove that both the ZO-SVRG-ADMM and ZO-SAGA-ADMM have convergence rate of $O(1/T)$, where $T$ denotes
the number of iterations.
In particular, our methods not only reach the best convergence rate of $O(1/T)$ for the nonconvex optimization,
but also are able to effectively solve many complex machine learning problems with multiple regularized penalties and constraints.
Finally, we conduct the experiments of black-box binary classification and structured adversarial attack on black-box deep neural network
to validate the efficiency of our algorithms.
\end{abstract}

\section{Introduction}
Alternating direction method of multipliers (ADMM \cite{gabay1976dual,boyd2011distributed}) is a popular optimization tool
for solving the composite and constrained problems in machine learning.
In particular, ADMM can efficiently optimize some problems with complicated structure regularization
such as the graph-guided fused lasso \cite{kim2009multivariate},
which is too complicated for the other popular optimization methods
such as proximal gradient methods \cite{beck2009fast}.
For the large-scale optimization,
the stochastic ADMM method \cite{ouyang2013stochastic} has been proposed.
Recently, some faster stochastic ADMM methods \cite{suzuki2014stochastic,zheng2016fast}
have been proposed by using the variance reduced (VR) techniques such as the SVRG \cite{johnson2013accelerating}.
In fact, ADMM is also highly successful in solving various nonconvex problems
such as training deep neural networks \cite{Taylor2016Training}.
Thus, some fast nonconvex stochastic ADMM methods have been developed in \cite{huang2016stochastic,Huang2019fasterst}.

Currently, most of the ADMM methods
need to  compute the gradients
of objective functions over each iteration.
However, in many machine learning problems,
the explicit expression of gradient for objective function
is difficult or infeasible to obtain.
For example, in black-box situations, only prediction results (\emph{i.e.}, function
values) are provided \cite{chen2017zoo,liu2018zeroth}.
In bandit settings \cite{agarwal2010optimal},
player only receives the partial feedback in terms of loss function values,
so it is impossible to obtain expressive gradient of the loss function.
Clearly, the classic optimization methods, based on the first-order gradient or second-order information, are not competent to these problems.
Recently, the zeroth-order optimization methods \cite{duchi2015optimal,Nesterov2017RandomGM}
are developed by only using the function values in the optimization.
\begin{table*}
  \centering
  \begin{tabular}{c|c|c|c|c}
  \hline
  % after \\: \hline or \cline{col1-col2} \cline{col3-col4} ...
 \textbf{Algorithm} & \textbf{Reference} & \textbf{Gradient Estimator} &  \textbf{Problem}  & \textbf{Convergence Rate} \\ \hline
  ZOO-ADMM & \cite{liu2018admm} &GauSGE &  C(S) + C(NS) & $O(\sqrt{1/T})$  \\ \hline
  ZO-GADM & \cite{gao2018information} & UniSGE & C(S) + C(NS)&  $O(\sqrt{1/T})$  \\ \hline
  RSPGF  &\cite{ghadimi2016mini} & GauSGE &NC(S) + C(NS)  & $O(\sqrt{1/T})$ \\ \hline
  ZO-ProxSVRG & \multirow{2}*{\cite{Huang2019faster}} & \multirow{2}*{CooSGE} & \multirow{2}*{NC(S) + C(NS)} & \multirow{2}*{$O(1/T)$} \\
  ZO-ProxSAGA &  & & &   \\ \hline
  ZO-SVRG-ADMM & \multirow{2}*{Ours} & \multirow{2}*{CooSGE} & \multirow{2}*{NC(S) + C(mNS)} & \multirow{2}*{$O(1/T)$} \\
  ZO-SAGA-ADMM &  & & &   \\ \hline
  \end{tabular}
  \caption{  Convergence properties comparison of the zeroth-order ADMM algorithms and other ones.
  C, NC, S, NS and mNS are the abbreviations of convex,
    non-convex, smooth, non-smooth and the sum of multiple non-smooth functions, respectively.
    $T$ is the whole iteration number.
    Gaussian Smoothing Gradient Estimator (GauSGE),
    Uniform Smoothing Gradient Estimator (UniSGE) and Coordinate Smoothing Gradient Estimator (CooSGE). }\label{tab:1}
\end{table*}

In the paper, we focus on using the zeroth-order methods to solve the following nonconvex nonsmooth problem:
\begin{align} \label{eq:1}
 \min_{x,\{y_j\}_{j=1}^k} & F(x,y_{[k]}) := \frac{1}{n}\sum_{i=1}^n f_i(x) + \sum_{j=1}^{k}\psi_j(y_j)  \\
 \mbox{s.t.} \ & Ax + \sum_{j=1}^kB_jy_j =c, \nonumber
\end{align}
where $A\in \mathbb{R}^{p\times d}$, $B_j\in \mathbb{R}^{p\times q}$ for all $j\in [k],\ k \geq 1$,
$f(x)=\frac{1}{n}\sum_{i=1}^n f_i(x): \mathbb{R}^d\rightarrow \mathbb{R}$
is a \emph{nonconvex} and black-box function, and each $\psi_j(y_j): \mathbb{R}^q\rightarrow \mathbb{R}$ is a convex and
\emph{nonsmooth} function.
In machine learning, function $f(x)$ can be used for the empirical loss,
$\sum_{j=1}^{k}\psi_j(y_j)$ for multiple structure penalties (\emph{e.g.}, sparse + group sparse), and the constraint for encoding
the structure pattern of model parameters such as graph structure.
Due to the flexibility in splitting the objective function into loss $f(x)$ and each penalty $\psi_j(y_j)$,
ADMM is an efficient method to solve the obove problem. However,
in the problem \eqref{eq:1}, we only access the objective values
rather than the whole explicit function $F(x,y_{[k]})$,
thus the classic ADMM methods are unsuitable for the problem \eqref{eq:1}.

Recently, \cite{gao2018information,liu2018admm} proposed the zeroth-order stochastic ADMM methods,
which only use the objective values to optimize. However,
these zeroth-order ADMM-based methods build on the convexity of objective function. Clearly, these methods
are limited in many nonconvex problems such as adversarial attack on black-box deep neural network (DNN).
At the same time, due to that the problem \eqref{eq:1} includes multiple nonsmooth regularization functions and an equality constraint,
the existing zeroth-order algorithms \cite{liu2018zeroth,Huang2019faster}
are not suitable for solving this problem.

In the paper, thus, we propose a class of
fast zeroth-order stochastic ADMM methods (\emph{i.e.}, ZO-SVRG-ADMM and ZO-SAGA-ADMM) to solve the problem \eqref{eq:1} based on the coordinate
smoothing gradient estimator \cite{liu2018zeroth}.
In particular, the ZO-SVRG-ADMM and ZO-SAGA-ADMM methods build on
the SVRG \cite{johnson2013accelerating} and SAGA \cite{defazio2014saga}, respectively.
Moreover, we study the convergence properties of the proposed methods.
Table \ref{tab:1} shows the convergence properties
of the proposed methods and other related ones.
\subsection{Challenges and Contributions}
Although both SVRG and SAGA show good performances in the first-order and second-order methods,
applying these techniques to the nonconvex zeroth-order ADMM method
is \emph{not trivial}. There exists at least two main \textbf{challenges}:
\begin{itemize}
 \item Due to failure of the Fej\'{e}r monotonicity of iteration, the convergence analysis of the nonconvex ADMM is generally quite difficult \cite{wang2015convergence}.
       With using the inexact zeroth-order estimated gradient, this difficulty becomes greater in the nonconvex ADMM methods.
 \item To guarantee convergence of our zeroth-order ADMM methods, we need to design a new effective \emph{Lyapunov} function, which can not follow
       the existing nonconvex (stochastic) ADMM methods \cite{jiang2019structured,huang2016stochastic}.
\end{itemize}
Thus, we carefully establish the \emph{Lyapunov} functions in the following theoretical analysis to
ensure convergence of the proposed methods.
In summary, our major \textbf{contributions} are given below:
\begin{itemize}
\item[1)] We propose a class of fast zeroth-order stochastic ADMM methods (\emph{i.e.}, ZO-SVRG-ADMM and ZO-SAGA-ADMM) to solve the problem \eqref{eq:1}.
\item[2)] We prove that both the ZO-SVRG-ADMM and ZO-SAGA-ADMM have convergence rate of $O(\frac{1}{T})$ for nonconvex nonsmooth optimization.
          In particular, our methods not only reach the existing best convergence rate $O(\frac{1}{T})$ for the nonconvex optimization,
          but also are able to effectively solve many machine learning problems with multiple complex regularized penalties.
\item[3)] Extensive experiments conducted on black-box classification and structured adversarial attack on black-box DNNs
          validate efficiency of the proposed algorithms.
\end{itemize}
\section{Related Works}
Zeroth-order (gradient-free) optimization is a powerful optimization tool for solving many
machine learning problems, where the gradient
of objective function is not available or computationally prohibitive.
Recently, the zeroth-order optimization methods are widely applied and studied.
For example, zeroth-order optimization methods have been applied to bandit feedback analysis \cite{agarwal2010optimal} and
black-box attacks on DNNs \cite{chen2017zoo,liu2018zeroth}.
\cite{Nesterov2017RandomGM} have proposed several random zeroth-order
methods based on the Gaussian smoothing gradient estimator.
To deal with the nonsmooth regularization, \cite{gao2018information,liu2018admm} have proposed
the zeroth-order online/stochastic ADMM-based methods.

So far, the above algorithms mainly build on the convexity of problems.
In fact, the zeroth-order methods are also highly successful in solving various nonconvex problems such as
adversarial attack to black-box DNNs \cite{liu2018zeroth}.
Thus, \cite{ghadimi2013stochastic,liu2018zeroth,Gu2018faster}
have begun to study the zeroth-order stochastic methods for the nonconvex optimization.
To deal with the nonsmooth regularization, \cite{ghadimi2016mini,Huang2019faster} have proposed
some non-convex zeroth-order proximal stochastic gradient methods.
However, these methods still are not well competent to some complex machine learning problems
such as a task of structured adversarial attack to the black-box DNNs, which is described in the following experiment.
\subsection{Notations}
Let $y_{[k]}= \{y_1,\cdots,y_k\}$ and $y_{[j:k]}= \{y_{j},\cdots,y_k\}$ for $j\in[k]$.
Given a positive definite matrix $G$, $\|x\|^2_G = x^TGx$;
$\sigma_{\max}(G)$ and $\sigma_{\min}(G)$
denote the largest and smallest eigenvalues of $G$, respectively,
and $\kappa_G = \frac{\sigma_{\max}(G)}{\sigma_{\min}(G)}$.
$\sigma^A_{\max}$ and $\sigma^A_{\min}$ denote the largest and smallest eigenvalues of matrix $A^TA$.
\section{Preliminaries}
In the section, we begin with restating a standard $\epsilon$-approximate stationary point of the problem \eqref{eq:1},
as in \cite{jiang2019structured,Huang2019fasterst}.
\begin{definition} \label{def:1}
Given $\epsilon>0$, the point $(x^*,y_{[k]}^*,\lambda^*)$ is said to be an
$\epsilon$-approximate stationary point of the problems \eqref{eq:1},
if it holds that
\begin{align}
 \mathbb{E}\big[ \mbox{dist}(0,\partial L(x^*,y_{[k]}^*,\lambda^*))^2 \big] \leq \epsilon,
\end{align}
where $L(x,y_{[k]},\lambda)=f(x) + \sum_{j=1}^k\psi_j(y_j) - \langle \lambda, Ax+\sum_{j=1}^kB_jy_j-c\rangle$,
\begin{align}
   \partial L(x,y_{[k]},\lambda) = \left [ \begin{matrix}
     \nabla_x L(x,y_{[k]},\lambda) \\
     \partial_{y_1} L(x,y_{[k]},\lambda) \\
     \cdots \\
     \partial_{y_k} L(x,y_{[k]},\lambda) \\
     -Ax - \sum_{j=1}^kB_jy_j+c
 \end{matrix}
 \right ], \nonumber
\end{align}
$\mbox{dist}(0,\partial L)=\inf_{L'\in \partial L} \|0-L'\|.$
\end{definition}

Next, we make some mild assumptions regarding problem \eqref{eq:1} as follows:
\begin{assumption}
Each function $f_i(x)$ is $L$-smooth for $\forall i \in \{1,2,\cdots,n\}$ such that
\begin{align}
\|\nabla f_i(x)-\nabla f_i(y)\| \leq L \|x - y\|, \ \forall x,y \in \mathbb{R}^d, \nonumber
\end{align}
which is equivalent to
\begin{align}
f_i(x) \leq f_i(y) + \nabla f_i(y)^T(x-y) + \frac{L}{2}\|x-y\|^2.  \nonumber
\end{align}
\end{assumption}
\begin{assumption}
Full gradient of loss function $f(x)$ is bounded, i.e., there exists a constant $\delta >0$ such that for all $x$,
it follows that $\|\nabla f(x)\|^2 \leq \delta^2$.
\end{assumption}
\begin{assumption}
$f(x) $ and $\psi_j(y_j)$ for all $j\in [k]$ are all lower bounded, and denote
$f^* = \inf_x f(x)$ and $\psi_j^* = \inf_y \psi_j(y)$ for $j\in [k]$.
\end{assumption}
\begin{assumption}
$A$ is a full row or column rank matrix.
\end{assumption}
Assumption 1 has been commonly used in the convergence analysis of
nonconvex algorithms \cite{ghadimi2016mini}.
Assumption 2 is widely used for
stochastic gradient-based and ADMM-type methods \cite{boyd2011distributed}.
Assumptions 3 and 4 are usually used in the convergence analysis of ADMM methods \cite{jiang2019structured,huang2016stochastic,Huang2019fasterst}.
Without loss of generality, we will use the full column rank of matrix $A$ in the rest of this paper.
\section{ Fast Zeroth-Order Stochastic ADMMs }
\begin{algorithm}[htb]
   \caption{ Nonconvex ZO-SVRG-ADMM Algorithm }
   \label{alg:1}
\begin{algorithmic}[1]
   \STATE {\bfseries Input:} $b$, $m$, $T$, $S=\lceil T/m\rceil$, $\eta>0$ and $\rho>0$;
   \STATE {\bfseries Initialize:} $x_0^1$, $y^{0,1}_j$ for $j\in [k]$ and $\lambda_0^1$;
   \FOR {$s=1,2,\cdots,S$}
   \STATE{} $\tilde{x}^{s+1}=x_0^{s+1}$, $\hat{\nabla} f(\tilde{x}^{s})=\frac{1}{n}\sum_{i=1}^n\hat{\nabla} f_i(\tilde{x}^{s})$;
   \FOR {$t=0,1,\cdots,m-1$}
   \STATE{} Uniformly randomly pick a mini-batch $\mathcal{I}_t$ (with replacement) from $\{1,2,\cdots,n\}$, and $|\mathcal{I}_t|=b$ ;
   \STATE{}  Using \eqref{eq:5} to estimate stochastic gradient
             $\hat{g}_{t}^{s} = \hat{\nabla} f_{\mathcal{I}_t}(x_{t}^{s})-\hat{\nabla} f_{\mathcal{I}_t}(\tilde{x}^s)+\hat{\nabla} f(\tilde{x}^s)$;
   \STATE{}  $y_j^{s,t+1} \!= \! \arg\min_{y_j} \! \big\{ \mathcal {L}_{\rho}(x_t^s,y_{[j-1]}^{s,t+1},y_j,y_{[j+1:k]}^{s,t},\lambda_t^s)$
             $ \quad + \frac{1}{2}\|y_j-y_j^{s,t}\|^2_{H_j} \big\} $, for all $j\in [k]$;
   \STATE{}  $x^s_{t+1}= \arg\min_x \hat{\mathcal {L}}_{\rho}\big( x,y_{[k]}^{s,t+1},\lambda^s_t, \hat{g}_{t}^{s} \big)$;
   \STATE{}  $\lambda^s_{t+1} = \lambda^s_t-\rho(Ax^s_{t+1} + \sum_{j=1}^kB_jy_j^{s,t+1}-c)$;
   \ENDFOR
   \STATE{} $x_0^{s+1}=x_{m}^{s}$, $y_j^{s+1,0}=y_j^{s,m}$ for $j\in [k]$, $\lambda_0^{s+1}=\lambda_{m}^{s}$;
   \ENDFOR
   \STATE {\bfseries Output:} $\{x,y_{[k]},\lambda\}$ chosen at random uniformly from $\{(x_{t}^s,y_{[k]}^{s,t},\lambda_t^s)_{t=1}^{m}\}_{s=1}^S$.
\end{algorithmic}
\end{algorithm}

In this section, we propose a class of zeroth-order stochastic ADMM methods to solve
the problem \eqref{eq:1}. First, we define an augmented
Lagrangian function of the problem \eqref{eq:1}:
\begin{align}
 \mathcal {L}_{\rho}(x,y_{[k]},\lambda) & \!= f(x) + \sum_{j=1}^k\psi_j(y_j) \!-\! \langle\lambda, Ax \!+\! \sum_{j=1}^kB_jy_j-c\rangle \nonumber \\
 &  + \frac{\rho}{2} \|Ax + \sum_{j=1}^kB_jy_j - c\|^2,
\end{align}
where $\lambda\in \mathbb{R}^{p}$ and $\rho >0$ denotes the dual variable and penalty parameter, respectively.

\begin{algorithm}[htb]
   \caption{ Nonconvex ZO-SAGA-ADMM Algorithm }
   \label{alg:2}
\begin{algorithmic}[1]
   \STATE {\bfseries Input:} $b$, $T$, $\eta>0$ and $\rho>0$;
   \STATE {\bfseries Initialize:} $z_i^0=x_0$ for $i\in \{1,2,\cdots,n\}$, $\hat{\phi}_0=\frac{1}{n}\sum_{i=1}^n\nabla f_i(z^0_i)$, $y^{0}_j$ for $j\in [k]$ and $\lambda_0$;
   \FOR {$t=0,1,\cdots,T-1$}
   \STATE{} Uniformly randomly pick a mini-batch $\mathcal{I}_t$ (with replacement) from $\{1,2,\cdots,n\}$, and $|\mathcal{I}_t|=b$ ;
   \STATE{} Using \eqref{eq:5} to estimate stochastic gradient $\hat{g}_{t} \!=\! \frac{1}{b}\sum_{i_t\in \mathcal{I}_t}
             \big(\hat{\nabla} f_{i_t}(x_{t})\!-\!\hat{\nabla} f_{i_t}(z^t_{i_t}) \big)\!+\!\hat{\phi}_t$
            with $\hat{\phi}_t\!=\!\frac{1}{n}\sum_{i=1}^n\hat{\nabla} f_i(z^t_i)$;
   \STATE{}  $y_j^{t+1} = \arg\min_{y_j} \big\{ \mathcal {L}_{\rho}(x_t,y_{[j-1]}^{t+1},y_j,y_{[j+1:k]}^{t},\lambda_t)
              + \frac{1}{2}\|y_j-y_j^t\|^2_{H_j} \big\}$, for all $j\in [k]$;
   \STATE{}  $x_{t+1}= \arg\min_x \hat{\mathcal {L}}_{\rho}\big( x,y_{[k]}^{t+1},\lambda_t, \hat{g}_{t} \big)$;
   \STATE{}  $\lambda_{t+1} = \lambda_t-\rho(Ax_{t+1} + \sum_{j=1}^kB_jy_j^{t+1}-c)$;
   \STATE{} $z^{t+1}_{i_t}= x_{t}$ for $i_t \in \mathcal{I}_t$ and $z_{i_t}^{t+1}=z^t_{i_t}$ for $i_t \not\in \mathcal{I}_t$;
   \STATE{} $\hat{\phi}_{t+1}=\hat{\phi}_t-\frac{1}{n}\sum_{i_t\in \mathcal{I}_t} \big(\hat{\nabla} f_{i_t}(z^t_{i_t})-\hat{\nabla} f_{i_t}(z^{t+1}_{i_t})\big)$;
   \ENDFOR
   \STATE {\bfseries Output:} $\{x,y_{[k]},\lambda\}$ chosen at random uniformly from $\{x_{t},y_{[k]}^{t},\lambda_t\}_{t=1}^{T}$.
\end{algorithmic}
\end{algorithm}
In the problem \eqref{eq:1}, the explicit expression of objective function $f_i(x)$ is not available, and
only the function value of $f_i(x)$ is available. To avoid
computing explicit gradient, thus, we use the coordinate smoothing gradient estimator
\cite{liu2018zeroth} to estimate gradients: for $i\in [n]$,
\begin{align} \label{eq:5}
 \hat{\nabla} f_i(x) = \sum_{j=1}^d \frac{1}{2\mu_j}\big(
 f_i(x+\mu_j e_j)-f_i(x-\mu_j e_j)\big)e_j,
\end{align}
where $\mu_j$ is a coordinate-wise smoothing parameter, and $e_j$ is
a standard basis vector with 1 at its $j$-th coordinate, and 0
otherwise.

Based on the above estimated gradients, we propose a zeroth-order
ADMM (ZO-ADMM) method to solve the problem \eqref{eq:1} by executing the following iterations,
for $t \!=\! 1,2,\cdots$
\begin{equation*}
\left\{
\begin{aligned}
& y_j^{t+1} = \arg\min_{y_j} \big\{ \mathcal {L}_{\rho}(x_t,y_{[j-1]}^{t+1},y_j,y_{[j+1:k]}^{t},\lambda_t) \\
& \quad \qquad \qquad \qquad + \frac{1}{2}\|y_j-y_j^t\|^2_{H_j} \big\}, \ \forall j\in [k] \\
& x_{t+1} = \arg\min_x \hat{\mathcal {L}}_{\rho}(x,y_{t+1},\lambda_t,\hat{\nabla} f(x)) \\
& \lambda_{t+1} = \lambda_t - \rho(Ax_{t+1}+By_{t+1}-c),
\end{aligned} \right.
\end{equation*}
where the term $\frac{1}{2}\|y_j-y_j^t\|^2_{H_j}$ with $H_j\succ 0$ to linearize the term $\|Ax + \sum_{j=1}^kB_jy_j-c\|^2$.
Here, due to using the inexact zeroth-order gradient to update $x$, we define an approximate function over $x_t$ as follows:
\begin{align}
\small
 & \hat{\mathcal {L}}_{\rho}\big(x,y_{[k]}^{t+1},\lambda_t,\hat{\nabla} f(x)\big) \!=\!
 f(x_t) \!+\! \hat{\nabla}f(x)^T(x-x_t) \nonumber \\
 & \!+\! \frac{1}{2\eta}\|x\!-\!x_t\|^2_G \!+\! \sum_{j=1}^k \psi_j(y_j^{t+1}) \!-\! \lambda_t^T(Ax \!+\! \sum_{j=1}^k B_jy_j^{t+1}\!-\!c) \nonumber \\
 & \!+\! \frac{\rho}{2}\|Ax \!+\! \sum_{j=1}^kB_jy_j^{t+1}\!-\!c\|^2,
\end{align}
where $G\succ0$, $\hat{\nabla} f(x)$ is the zeroth-order gradient and $\eta>0$ is a step size.
Considering the matrix $A^TA$ is large, set $G = r I - \rho \eta A^TA \succ I$ with $r > \rho \eta
\sigma_{\max}(A^TA) + 1 $ to linearize the term $\|Ax + \sum_{j=1}^kB_jy_j^{t+1}-c\|^2$.
In the problem \eqref{eq:1}, not only the noisy
gradient of $f_i(x)$ is not available, but also the sample size $n$
is very large. Thus, we propose fast ZO-SVRG-ADMM and ZO-SAGA-ADMM to solve the problem \eqref{eq:1},
based on the SVRG and SAGA, respectively.

Algorithm \ref{alg:1} shows the algorithmic framework of ZO-SVRG-ADMM.
In Algorithm \ref{alg:1}, we use the estimated stochastic gradient
$\hat{g}_{t}^{s} = \hat{\nabla} f_{\mathcal{I}_t}(x_{t}^{s})-\hat{\nabla} f_{\mathcal{I}_t}(\tilde{x}^s)+\hat{\nabla} f(\tilde{x}^s)$
with $\hat{\nabla} f_{\mathcal{I}_t}(x^s_t) = \frac{1}{b}\sum_{i_t\in \mathcal{I}_t} \hat{\nabla} f_{i_t}(x^s_t)$.
We have $\mathbb{E}_{\mathcal{I}_t}[\hat{g}_{t}^{s}] = \hat{\nabla} f(x_{t}^{s}) \neq \nabla f(x_{t}^{s})$, \emph{i.e.}, this stochastic gradient
is a \textbf{biased} estimate of the true full gradient.
Although the SVRG has shown a great promise,
it relies upon the assumption that the stochastic
gradient is an \textbf{unbiased }estimate of true full gradient.
Thus, adapting the similar ideas of SVRG to zeroth-order ADMM optimization
is not a trivial task. To handle this challenge, we choose the appropriate step size
$\eta$, penalty parameter $\rho$ and smoothing parameter $\mu$ to guarantee the convergence of our algorithms,
which will be discussed in the following convergence analysis.

Algorithm \ref{alg:2} shows the algorithmic framework of ZO-SAGA-ADMM.
In Algorithm \ref{alg:2}, we use the estimated stochastic gradient $\hat{g}_{t} = \frac{1}{b} \sum_{i_t\in \mathcal{I}_t}
\big(\hat{\nabla} f_{i_t}(x_{t})-\hat{\nabla} f_{i_t}(z^t_{i_t}) \big) + \hat{\phi}_t$
with $\hat{\phi}_t=\frac{1}{n}\sum_{i=1}^n \hat{\nabla} f_i(z^t_i)$.
Similarly, we have $\mathbb{E}_{\mathcal{I}_t}[\hat{g}_{t}] = \hat{\nabla} f(x_t) \neq \nabla f(x_t)$.
\section{Convergence Analysis}
In this section, we will study the convergence properties of
the proposed algorithms (ZO-SVRG-ADMM and ZO-SAGA-ADMM).
\subsection{Convergence Analysis of ZO-SVRG-ADMM }
In this subsection, we analyze convergence properties of the ZO-SVRG-ADMM.

Given the sequence $\{(x^{s}_t,y_{[k]}^{s,t},\lambda^{s}_t)_{t=1}^m\}_{s=1}^S$ generated from Algorithm \ref{alg:1},
we define a \emph{Lyapunov} function:
 \begin{align}
 R^s_t \! = & \mathbb{E}\big[\mathcal{L}_{\rho} (x^s_t,y_{[k]}^{s,t},\lambda^s_t) \!+\! (\frac{3\sigma^2_{\max}(G)}{\sigma^A_{\min}\eta^2\rho} \!+\! \frac{9L^2}{\sigma^A_{\min}\rho})\|x^s_{t}\!-\!x^s_{t-1}\|^2 \nonumber \\
 & \quad + \frac{18 L^2d }{\sigma^A_{\min}\rho b}\|x^s_{t-1}-\tilde{x}^s\|^2 + c_t\|x^s_{t}-\tilde{x}^s\|^2\big], \nonumber
 \end{align}
 where the positive sequence $\{c_t\}$ satisfies
 \begin{equation*}
  c_t= \left\{
  \begin{aligned}
  & \frac{36 L^2d }{\sigma^A_{\min}\rho b} +
     \frac{2Ld}{b} + (1+\beta)c_{t+1}, \ 1 \leq t \leq m, \\
  & 0, \ t \geq m+1.
  \end{aligned}
  \right.\end{equation*}
Next, we definite a useful variable $\theta^s_{t} \!=\! \mathbb{E} \big[ \|x^s_{t+1}-x^s_{t}\|^2 + \|x^s_{t}-x^s_{t-1}\|^2 + \frac{d}{b}(\|x^s_{t}-\tilde{x}^s\|^2 + \|x^s_{t-1}-\tilde{x}^s\|^2 )
   + \sum_{j=1}^k \|y_j^{s,t}-y_j^{s,t+1}\|^2\big]$.
\begin{theorem} \label{th:1}
 Suppose the sequence $\{(x^{s}_t,y_{[k]}^{s,t},\lambda^{s}_t)_{t=1}^m\}_{s=1}^S$ is generated from Algorithm \ref{alg:1}. Let $m=n^{\frac{1}{3}}$,
 $b=d^{1-l} n^{\frac{2}{3}},\ l \in\{ 0,\frac{1}{2},1\}$, $\eta = \frac{\alpha\sigma_{\min}(G)}{9d^lL} \ (0 < \alpha \leq 1 )$ and
 $\rho = \frac{6\sqrt{71}\kappa_G d^l L}{\sigma^A_{\min}\alpha}$,
 then we have
 \begin{align}
\min_{s,t} \mathbb{E}\big[ \mbox{dist}(0,\partial L(x^s_t,y_{[k]}^{s,t},\lambda^s_t))^2\big] \leq O(\frac{\tilde{\nu} d^{2l}}{T}) + O(d^{2+2l}\mu^2), \nonumber
\end{align}
where $\tilde{\nu}=R^1_0 - R^*$, and $R^*$ is a lower bound of function $R^s_t$.
It follows that suppose the smoothing parameter $\mu$ and the whole iteration number $T=mS$ satisfy
 \begin{align}
  \frac{1}{\mu} = O \big( \frac{d^{1+l}}{\sqrt{\epsilon}} \big), \quad
   T = O\big( \frac{\tilde{\nu} d^{2l}}{\epsilon} \big), \nonumber
 \end{align}
 then $(x^{s^*}_{t^*},y_{[k]}^{s^*,t^*},\lambda^{s^*}_{t^*})$
 is an $\epsilon$-approximate stationary point of the problems \eqref{eq:1}, where $(t^*,s^*) = \mathop{\arg\min}_{t,s}\theta^s_{t}$.
\end{theorem}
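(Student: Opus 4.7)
The plan is to establish a per-iteration sufficient-decrease inequality for the Lyapunov function $R^s_t$, telescope across the $m$ inner iterations and the $S$ epochs, lower-bound $R^s_t$ using Assumption 3, and finally translate the resulting bound on the auxiliary quantity $\theta^s_t$ into a bound on $\mathbb{E}[\mathrm{dist}(0,\partial L)^2]$. The specific tuning of $\eta$, $\rho$, $b$, $m$ in the theorem is engineered so that the SVRG variance and the coordinate-smoothing bias are exactly absorbed by the negative terms of the descent inequality.

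First I would extract the first-order optimality conditions from each line of Algorithm \ref{alg:1}. The $y_j$-update yields $0 \in \partial \psi_j(y^{s,t+1}_j) - B_j^T\lambda^s_t + \rho B_j^T r^{s,t+1}_j + H_j(y^{s,t+1}_j - y^{s,t}_j)$ for the appropriate partial residual $r^{s,t+1}_j$; the $x$-update gives $\hat{g}^s_t - A^T\lambda^s_t + \rho A^T(Ax^s_{t+1} + \sum_j B_jy^{s,t+1}_j - c) + \tfrac{1}{\eta}G(x^s_{t+1}-x^s_t)=0$, which combined with the dual update reduces to $A^T\lambda^s_{t+1} = \hat{g}^s_t + \tfrac{1}{\eta}G(x^s_{t+1}-x^s_t)$. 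Using Assumption 4 (full column rank of $A$), I would bound $\sigma^A_{\min}\|\lambda^s_{t+1}-\lambda^s_t\|^2 \le \|A^T(\lambda^s_{t+1}-\lambda^s_t)\|^2$ and decompose the right-hand side into $\|\hat{g}^s_t-\hat{g}^s_{t-1}\|^2$ plus a $G$-quadratic in $x^s_{t+1}-2x^s_t+x^s_{t-1}$. The first summand is then controlled by the $L$-smoothness of $f$, the SVRG variance identity $\mathbb{E}\|\hat{g}^s_t - \hat{\nabla}f(x^s_t)\|^2 = O(L^2 d\|x^s_t-\tilde{x}^s\|^2/b)$, and the coordinate-smoothing bias $\|\hat{\nabla}f_i(x) - \nabla f_i(x)\|^2 = O(d\mu^2 L^2)$; this explains why the coefficients $3\sigma_{\max}^2(G)/(\sigma^A_{\min}\eta^2\rho)$, $9L^2/(\sigma^A_{\min}\rho)$ and $18L^2 d/(\sigma^A_{\min}\rho b)$ appear inside $R^s_t$.

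Second I would bound the one-step change of the augmented Lagrangian in three blocks. Each $y_j$-update, being the minimizer of a strongly convex proximal problem, decreases $\mathcal{L}_\rho$ by at least $\tfrac{1}{2}\sigma_{\min}(H_j)\|y^{s,t+1}_j-y^{s,t}_j\|^2$. The $x$-update, via $L$-smoothness of $f$ and the surrogate model $\hat{\mathcal{L}}_\rho$, decreases $\mathcal{L}_\rho$ by $\Omega(\|x^s_{t+1}-x^s_t\|^2/\eta)$ modulo the cross term $\langle \hat{g}^s_t - \nabla f(x^s_t), x^s_{t+1}-x^s_t\rangle$, which Young's inequality splits into the SVRG-plus-bias variance and a small multiple of $\|x^s_{t+1}-x^s_t\|^2$. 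The dual ascent then contributes $+\tfrac{1}{\rho}\|\lambda^s_{t+1}-\lambda^s_t\|^2$, controlled by the step-one bound. Combining the three blocks, the backward recursion defining $c_t$ with slack $1+\beta$ (producing a geometric sum bounded by $O(m\cdot L^2 d/(\sigma^A_{\min}\rho b) + m Ld/b)$) causes the $\|x^s_t-\tilde{x}^s\|^2$ cross-epoch terms to cancel telescopically; under the stated choices this yields $\mathbb{E}[R^s_{t+1}-R^s_t] \le -\gamma\,\theta^s_t + O(d^2 L^2 \mu^2)$ with $\gamma = \Omega(d^{-l})$.

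Third, telescoping over $t=0,\dots,m-1$ (where $c_m=0$ and $x^s_0=\tilde{x}^s$ cause the Lyapunov correction terms to reset at epoch boundaries) and then over $s=1,\dots,S$, together with a uniform lower bound $R^s_t \ge R^*$ obtained by eliminating $\lambda$ via $A^T\lambda = \hat{g}+G(x^s_t-x^s_{t-1})/\eta$ and applying Assumption 3 to $f$ and the $\psi_j$, gives $\min_{s,t}\theta^s_t \le O(\tilde{\nu}d^l/T) + O(d^{2+l}\mu^2)$. To convert this into the stated stationarity bound I re-inject the optimality conditions into $\partial L(x^s_t,y^{s,t}_{[k]},\lambda^s_t)$: the $y_j$-block is dominated by $H_j(y^{s,t+1}_j-y^{s,t}_j)+\rho B_j^T(\cdot)$, the $x$-block by $\nabla f(x^s_t)-\hat{g}^s_t + G(x^s_t-x^s_{t-1})/\eta + \rho A^T(\cdot)$, and the primal-residual block by $\tfrac{1}{\rho}(\lambda^s_t-\lambda^s_{t-1})$. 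Each is majorized by $O(\rho^2)\cdot\theta^s_t$ plus the coordinate-smoothing bias, and since $\rho=\Theta(d^l L)$ this introduces the extra $d^l$ factor that produces the final rate $O(\tilde{\nu}d^{2l}/T)+O(d^{2+2l}\mu^2)$; setting each term below $\epsilon$ gives the prescribed $T$ and $\mu$. The main obstacle is the interplay in step two between the biased zeroth-order gradient and the SVRG control variate: the textbook SVRG bound hinges on $\mathbb{E}[\hat{g}^s_t]=\nabla f(x^s_t)$, whereas here $\mathbb{E}_{\mathcal{I}_t}[\hat{g}^s_t]=\hat{\nabla} f(x^s_t)\neq\nabla f(x^s_t)$, so one must carry the smoothing bias as a separate deterministic error through all Young's inequality splittings, and simultaneously tune $\eta$, $\rho$, $b$, $m$ so that the recursion for $c_t$ remains summable and the $A^T\lambda$ estimate stays tight.
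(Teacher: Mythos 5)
Your plan follows the paper's own route step for step: the dual-increment bound obtained from $A^T\lambda^s_{t+1}=\hat g^s_t+\tfrac{1}{\eta}G(x^s_{t+1}-x^s_t)$ and full column rank of $A$ (Lemma~\ref{lem:1}), the block-wise descent of $\mathcal L_\rho$ combined into a decrease of $R^s_t$ with the backward recursion for $c_t$ and a separate treatment of the epoch boundary $R^{s+1}_1$ versus $R^s_m$ (Lemma~\ref{lem:2}), the lower bound on $R^s_t$ obtained by eliminating $\lambda$ through the same dual relation (note this step also needs Assumption~2 to control $\|\nabla f(x^s_t)\|^2$, which you do not mention), and finally the conversion of $\theta^s_t$ into $\mathbb E[\mathrm{dist}(0,\partial L)^2]$ via the three optimality conditions. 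So the architecture is sound and identical to the paper's.

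The genuine problem is your dimension bookkeeping, which as written does not deliver the stated rate. With the theorem's choices the descent coefficients are $\sigma^H_{\min}$ and $L$ for the $y$- and $\tfrac{d}{b}\|x^s_t-\tilde x^s\|^2$-terms and $\chi_t\ge \tfrac{3\sqrt{71}\kappa_G L d^{l}}{2\alpha}$ for the $x$-term, so $\gamma=\min(\sigma^H_{\min},L,\chi_t)=\Theta(1)$; consequently the telescoped bound is $\tfrac1T\sum_{s,t}\theta^s_t\le O(\tilde\nu/T)+O(d^2\mu^2)$, with \emph{no} $d^{l}$ inflation at this stage. All of the $d^{2l}$ in the final rate enters only through the conversion constants $\nu_{\max}=O\big(\rho^2+\sigma_{\max}^2(G)/\eta^2\big)=O(d^{2l})$. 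Your version instead treats $\gamma$ as $d^{-l}$ (so $\theta\le O(\tilde\nu d^{l}/T)+O(d^{2+l}\mu^2)$) and then multiplies by a conversion factor you yourself identify as $O(\rho^2)=O(d^{2l})$; that yields $O(\tilde\nu d^{3l}/T)+O(d^{2+3l}\mu^2)$, not the theorem's $O(\tilde\nu d^{2l}/T)+O(d^{2+2l}\mu^2)$, and your statement that the conversion adds only ``an extra $d^{l}$'' contradicts your own $O(\rho^2)$ majorization. The fix is exactly the paper's accounting: verify $\chi_t\ge\tfrac{3\sqrt{71}\kappa_G Ld^{l}}{2\alpha}$ from the choices of $\eta,\rho,b,m$ (so that $\gamma=\Theta(1)$), keep the $\theta$-bound free of $d^{l}$, and let $\nu_{\max}$ alone carry the $d^{2l}$ factor.
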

\begin{remark}
Theorem \ref{th:1} shows that given $m=n^{\frac{1}{3}}$,
 $b=d^{1-l} n^{\frac{2}{3}},\ l \in\{ 0,\frac{1}{2},1\}$, $\eta = \frac{\alpha\sigma_{\min}(G)}{9d^lL} \ (0 < \alpha \leq 1 )$,
 $\rho = \frac{6\sqrt{71}\kappa_G d^l L}{\sigma^A_{\min}\alpha}$ and $\mu= O(\frac{1}{d\sqrt{T}})$,
 the ZO-SVRG-ADMM has convergence rate of $O(\frac{d^{2l}}{T})$. Specifically, when $1\leq d < n^{\frac{1}{3}}$, given $l=0$,
 the ZO-SVRG-ADMM has convergence rate of $O(\frac{1}{T})$; when $n^{\frac{1}{3}} \leq d < n^{\frac{2}{3}}$, given $l=\frac{1}{2}$,
 it has convergence rate of $O(\frac{\sqrt{d}}{T})$; when $n^{\frac{2}{3}} \leq d$, given $l=1$,
 it has convergence rate of $O(\frac{d}{T})$.
\end{remark}
\subsection{Convergence Analysis of ZO-SAGA-ADMM }
In this subsection, we provide the convergence analysis of
the ZO-SAGA-ADMM.

Given the sequence $\{x_t,y_{[k]}^{t},\lambda_t\}_{t=1}^T$ generated from Algorithm \ref{alg:2},
we define a \emph{Lyapunov} function
 \begin{align}
 & \Omega_t \! = \mathbb{E}\big[ \mathcal{L}_{\rho} (x_t,y_{[k]}^{t},\lambda_t) \!+\! (\frac{3\sigma^2_{\max}(G)}{\sigma^A_{\min}\rho\eta^2}
  \!+\! \frac{9L^2}{\sigma^A_{\min}\rho}) \|x_{t}\!-\!x_{t-1}\|^2  \nonumber \\
 &\quad + \frac{18 L^2d }{\sigma^A_{\min}\rho b}\frac{1}{n} \sum_{i=1}^n\|x_{t-1}-z^{t-1}_i\|^2 + c_t\frac{1}{n} \sum_{i=1}^n\|x_{t}-z^t_i\|^2 \big]. \nonumber
\end{align}
Here the positive sequence $\{c_t\}$ satisfies
 \begin{equation*}
 c_t \!= \left\{
  \begin{aligned}
  & \frac{36L^2d }{\sigma^A_{\min}\rho b} \!+ \frac{2Ld}{b} \!+ (1-\hat{p})(1+\beta)c_{t+1}, \ 0 \leq t \leq T-1, \\
  & 0, \ t \geq T,
  \end{aligned}
  \right.    \end{equation*}
 where $\hat{p}$ denotes probability of an index $i$ in $\mathcal{I}_t$.
Next, we definite a useful variable $\theta_{t} \!=\! \mathbb{E} \big[\|x_{t+1}\!-\!x_t\|^2 \!+\! \|x_t\!-\!x_{t-1}\|^2
\!+\! \frac{d}{bn}\sum^n_{i=1} (\|x_t\!-\!z^t_i\|^2 \!+\! \|x_{t-1}\!-\!z^{t-1}_i\|^2)\!+\! \sum_{j=1}^k \|y_j^{t}\!-\!y_j^{t+1}\|^2 \big]$.
\begin{theorem} \label{th:2}
Suppose the sequence $\{x_t,y_{[k]}^{t},\lambda_t\}_{t=1}^T$ is generated from Algorithm \ref{alg:2}. Let $b = n^{\frac{2}{3}}d^{\frac{1-l}{3}},\ l \in\{ 0,\frac{1}{2},1\}$,
$\eta = \frac{\alpha\sigma_{\min}(G)}{33d^lL} \ (0 < \alpha \leq 1)$ and $\rho = \frac{6\sqrt{791}\kappa_G d^l L}{\sigma^A_{\min}\alpha}$
then we have
\begin{align}
\min_{1\leq t \leq T} \mathbb{E}\big[ \mbox{dist}(0,\partial L(x_t,y_{[k]}^{t},\lambda_t))^2\big] \leq O(\frac{\tilde{\nu}d^{2l}}{T}) \!+ O(d^{2+2l}\mu^2), \nonumber
\end{align}
where $\tilde{\nu}=\Omega_0- \Omega^*$, and $\Omega^*$ is a lower bound of function $\Omega_t$.
It follows that suppose the parameters $\mu$ and $T$ satisfy
\begin{align}
 \frac{1}{\mu} = O \big( \frac{d^{1+l} }{\sqrt{\epsilon}} \big), \quad  T = O\big(\frac{\tilde{\nu}d^{2l}}{\epsilon} \big), \nonumber
\end{align}
then $(x_{t^*},y_{[k]}^{t^*},\lambda_{t^*})$ is an $\epsilon$-approximate stationary point of the problems \eqref{eq:1},
where $t^* = \mathop{\arg\min}_{ 1\leq t\leq T}\theta_{t}$.
\end{theorem}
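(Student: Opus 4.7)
The plan is to mirror the Lyapunov-descent scheme used for Theorem \ref{th:1} but adapted to the SAGA memory table $\{z_i^t\}_{i=1}^n$, so the core task is to show that $\Omega_t - \Omega_{t+1}$ dominates a positive multiple of $\theta_t$ up to the smoothing bias $O(d^{2+2l}\mu^2)$, and then to bound $\mathbb{E}[\mbox{dist}(0,\partial L(x_t,y_{[k]}^t,\lambda_t))^2]$ by a constant times $\theta_t$. Telescoping over $t=0,\dots,T-1$ and using the lower bound $\Omega^*$ would then yield the stated rate $O(\tilde{\nu}d^{2l}/T)+O(d^{2+2l}\mu^2)$, from which the $\epsilon$-approximate stationarity conclusion follows by choosing $\mu$ and $T$ as stated.

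The analysis rests on four ingredients that I would establish as separate lemmas. First, for the coordinate smoothing estimator I would record the standard bounds $\|\hat{\nabla} f_i(x)-\nabla f_i(x)\|^2 \leq O(d^2 L^2 \mu^2)$ and $\|\hat{\nabla} f_i(x)-\hat{\nabla} f_i(y)\|^2 \leq d L^2\|x-y\|^2$ (the extra $d$ is precisely why the dimension factor appears in the rate). Second, using $\mathbb{E}_{\mathcal{I}_t}[\hat{g}_t] = \hat{\nabla} f(x_t)$ and independence of the sampled indices, the SAGA variance would be bounded by $\mathbb{E}\|\hat{g}_t-\hat{\nabla} f(x_t)\|^2 \leq \frac{dL^2}{bn}\sum_{i=1}^n\|x_t-z_i^t\|^2$, which is exactly the term that motivates the presence of $\frac{1}{n}\sum_i\|x_t-z_i^t\|^2$ inside $\Omega_t$. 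Third, from the $x$-subproblem optimality, the $\frac{1}{2\eta}\|\cdot\|_G^2$ regularization and the descent lemma for $f$ applied to the inexact update I would derive a primal decrease of the augmented Lagrangian controlled by $\frac{\sigma_{\min}(G)}{2\eta}\|x_{t+1}-x_t\|^2$ minus the variance/bias of $\hat{g}_t$; from each $y_j$-subproblem I would obtain a decrease of $\tfrac{1}{2}\|y_j^{t+1}-y_j^t\|_{H_j}^2$. Fourth, using the full column rank of $A$ together with the optimality in $x$, I would bound $\|\lambda_{t+1}-\lambda_t\|^2 \leq \frac{1}{\sigma^A_{\min}}\big(O(\|x_{t+1}-x_t\|^2/\eta^2) + O(L^2)\|x_t-x_{t-1}\|^2 + (\text{SAGA variance}) + O(d^2L^2\mu^2)\big)$, which is what forces the $\frac{3\sigma_{\max}^2(G)}{\sigma^A_{\min}\rho\eta^2}\|x_t-x_{t-1}\|^2$ and $\frac{18L^2d}{\sigma^A_{\min}\rho b}\frac{1}{n}\sum\|x_{t-1}-z_i^{t-1}\|^2$ terms to appear inside $\Omega_t$.

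Combining these pieces, one period of the algorithm is handled as follows: the Lagrangian change equals (primal decrease) $-$ (dual increase), the dual increase is absorbed by a multiple of $1/\rho$ times the bound from ingredient four, and the SAGA memory terms satisfy the recursion $\frac{1}{n}\sum_i\|x_{t+1}-z_i^{t+1}\|^2 \leq (1-\hat{p})(1+\beta)\frac{1}{n}\sum_i\|x_t-z_i^t\|^2 + (1+1/\beta)\|x_{t+1}-x_t\|^2$ for any $\beta>0$, which is exactly why $c_t$ carries the factor $(1-\hat{p})(1+\beta)$ in the recursion stated in the paper. Choosing $\beta$ of order $\hat{p}$ keeps $c_t$ uniformly bounded, and then picking $\eta = \frac{\alpha\sigma_{\min}(G)}{33 d^l L}$, $\rho = \frac{6\sqrt{791}\kappa_G d^l L}{\sigma^A_{\min}\alpha}$ and $b = n^{2/3}d^{(1-l)/3}$ makes the coefficient in front of $\theta_t$ in the decrease positive. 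Telescoping and dividing by $T$ produces $\min_t\theta_t = O(\tilde{\nu}/T) + O(d^{2+2l}\mu^2)$.

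For the final conversion to a stationarity bound I would apply the KKT-style identities coming from the $(x,y_j)$-subproblems to express every block of $\partial L(x_t,y_{[k]}^t,\lambda_t)$ in terms of the successive-iterate differences that $\theta_t$ controls, picking up an extra factor $O(d^{2l})$ (from $1/\eta^2$) and a bias $O(d^2\mu^2)$ from swapping $\nabla f$ with $\hat{\nabla} f$ and $\hat{g}_t$. The hardest step, in my view, is the fourth ingredient: because the zeroth-order estimator is \emph{biased}, the dual-gap bound must simultaneously track the smoothing bias $O(d^2L^2\mu^2)$, the SAGA variance $\frac{dL^2}{bn}\sum_i\|x_t-z_i^t\|^2$, and the successive-difference term $\|x_t-x_{t-1}\|^2$, and all three have to be absorbed by the combined decrease of the augmented Lagrangian and the auxiliary terms in $\Omega_t$ without destroying positivity; this is what pins down the precise constants $\frac{3\sigma_{\max}^2(G)}{\sigma^A_{\min}\rho\eta^2}$, $\frac{9L^2}{\sigma^A_{\min}\rho}$ and $\frac{18L^2d}{\sigma^A_{\min}\rho b}$, and the delicate balance between $\eta$ and $\rho$ that forces $\eta\rho = O(\kappa_G/\sigma^A_{\min})$.
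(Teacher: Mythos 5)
Your plan follows essentially the same route as the paper's proof: the same Lyapunov descent for $\Omega_t$ with the SAGA memory recursion carrying the $(1-\hat{p})(1+\beta)$ factor, the same dual-gap bound on $\|\lambda_{t+1}-\lambda_t\|^2$ obtained from the $x$-optimality condition and the full column rank of $A$, the same parameter regimes for $\eta$, $\rho$, $b$ to keep the coefficient of $\theta_t$ positive, and the same conversion of $\mathbb{E}\big[\mbox{dist}(0,\partial L)^2\big]$ into a multiple of $\theta_t$ plus an $O(d^{2}\mu^2)$ bias via the subproblem optimality conditions. The only step you gloss over that the paper spells out is verifying that $\Omega_t$ is actually bounded below (done there via Assumptions 2--3 and the identity $\lambda_{t+1}=(A^T)^+\big(\hat{g}_t+\tfrac{G}{\eta}(x_{t+1}-x_t)\big)$ with Young's inequality), which is needed before the telescoping argument can deliver $\tilde{\nu}=\Omega_0-\Omega^*$.
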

\begin{remark}
Theorem \ref{th:2} shows that $b = n^{\frac{2}{3}}d^{\frac{1-l}{3}},\ l \in\{ 0,\frac{1}{2},1\}$,
$\eta = \frac{\alpha\sigma_{\min}(G)}{33d^lL} \ (0 < \alpha \leq 1)$, $\rho = \frac{6\sqrt{791}\kappa_G d^l L}{\sigma^A_{\min}\alpha}$
and $\mu= O(\frac{1}{d\sqrt{T}})$, the ZO-SAGA-ADMM has the $O(\frac{d^{2l}}{T})$ of convergence rate. Specifically, when $1\leq d < n$, given $l=0$,
 the ZO-SAGA-ADMM has convergence rate of $O(\frac{1}{T})$; when $n \leq d < n^2$, given $l=\frac{1}{2}$,
 it has convergence rate of $O(\frac{d}{T})$; when $n^2 \leq d$, given $l=1$,
 it has convergence rate of $O(\frac{d^2}{T})$.
\end{remark}
\begin{figure*}[!t]
\centering
\subfigure[20news]{\includegraphics[width=0.235\textwidth]{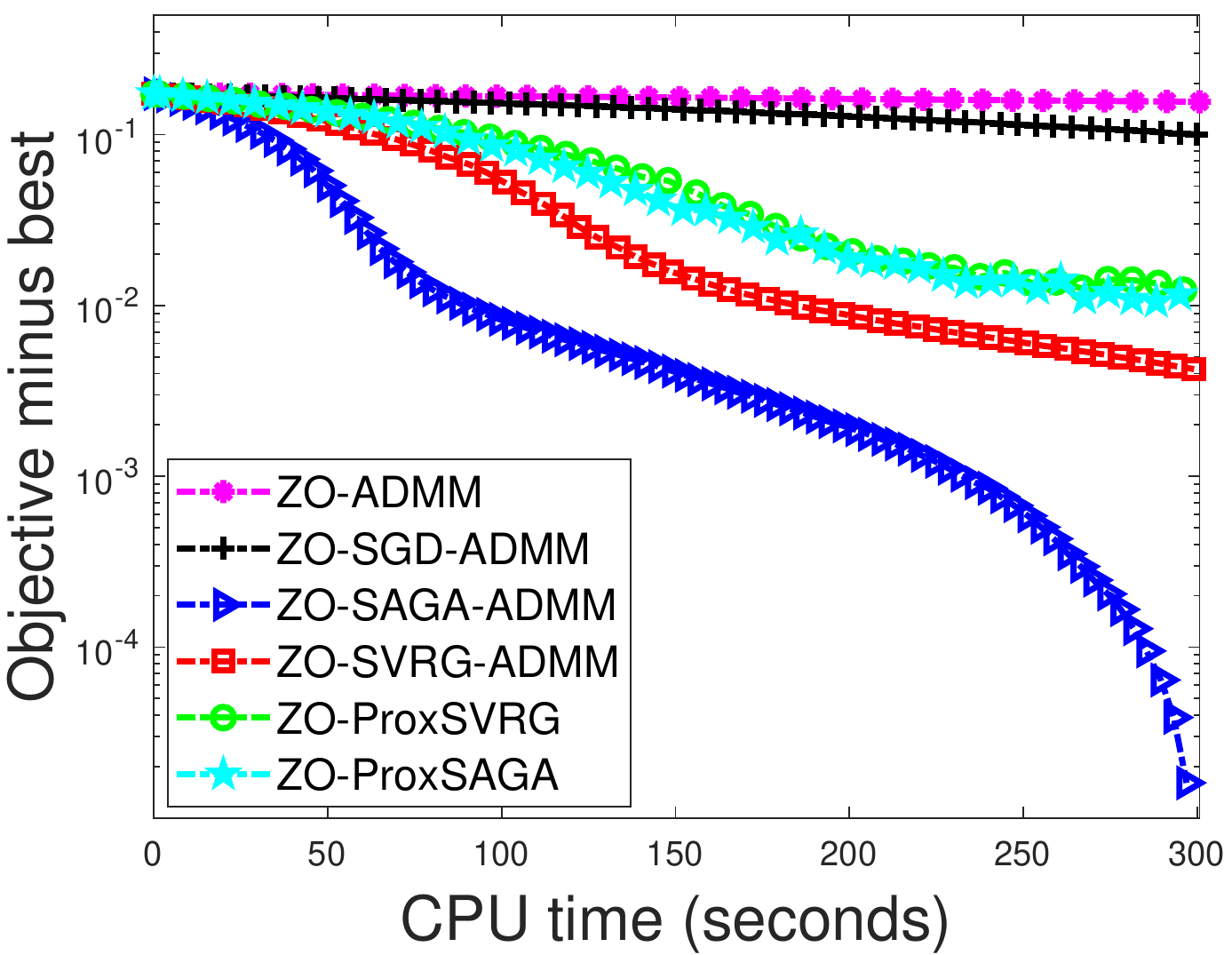}}
\subfigure[a9a]{\includegraphics[width=0.235\textwidth]{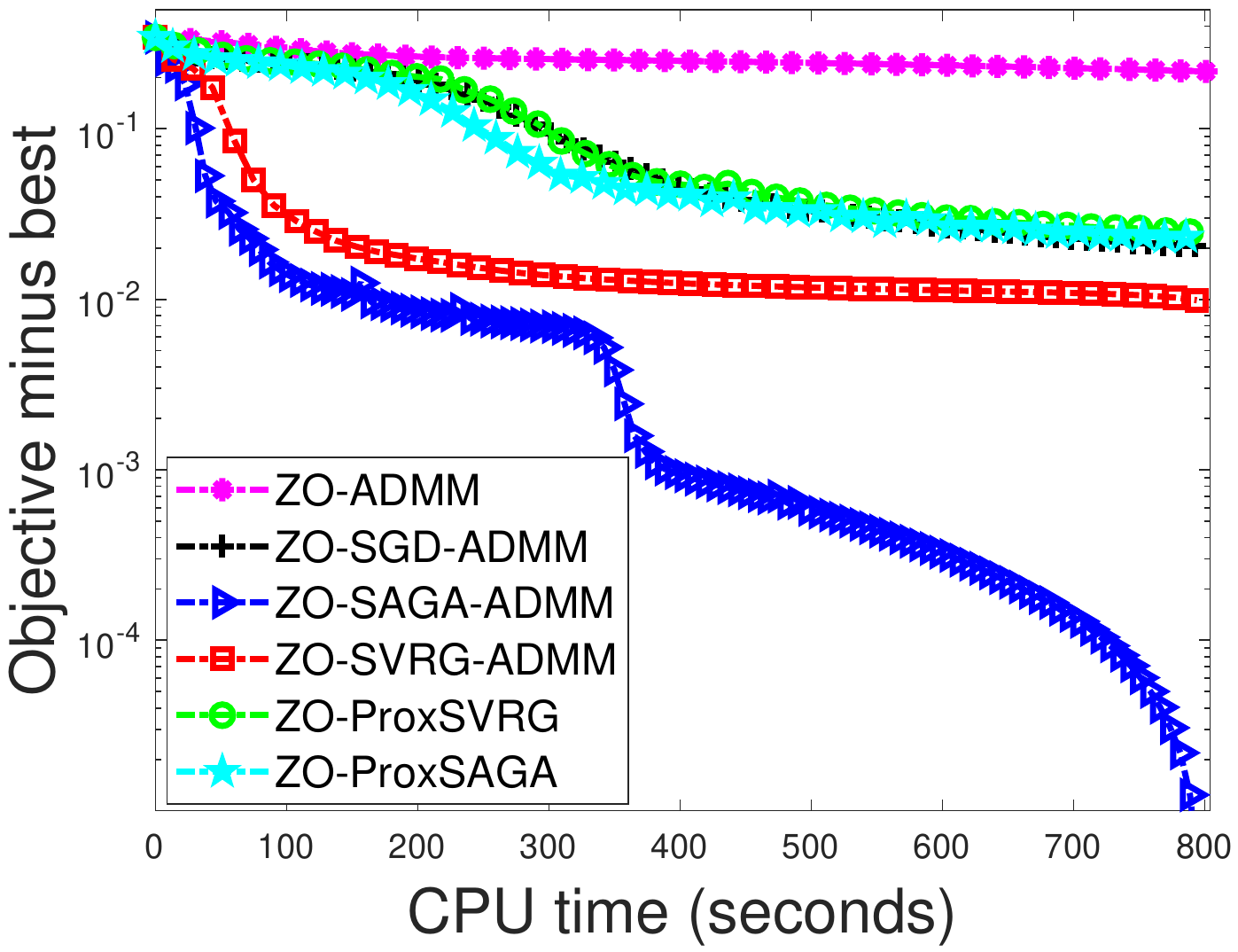}}
\subfigure[w8a]{\includegraphics[width=0.235\textwidth]{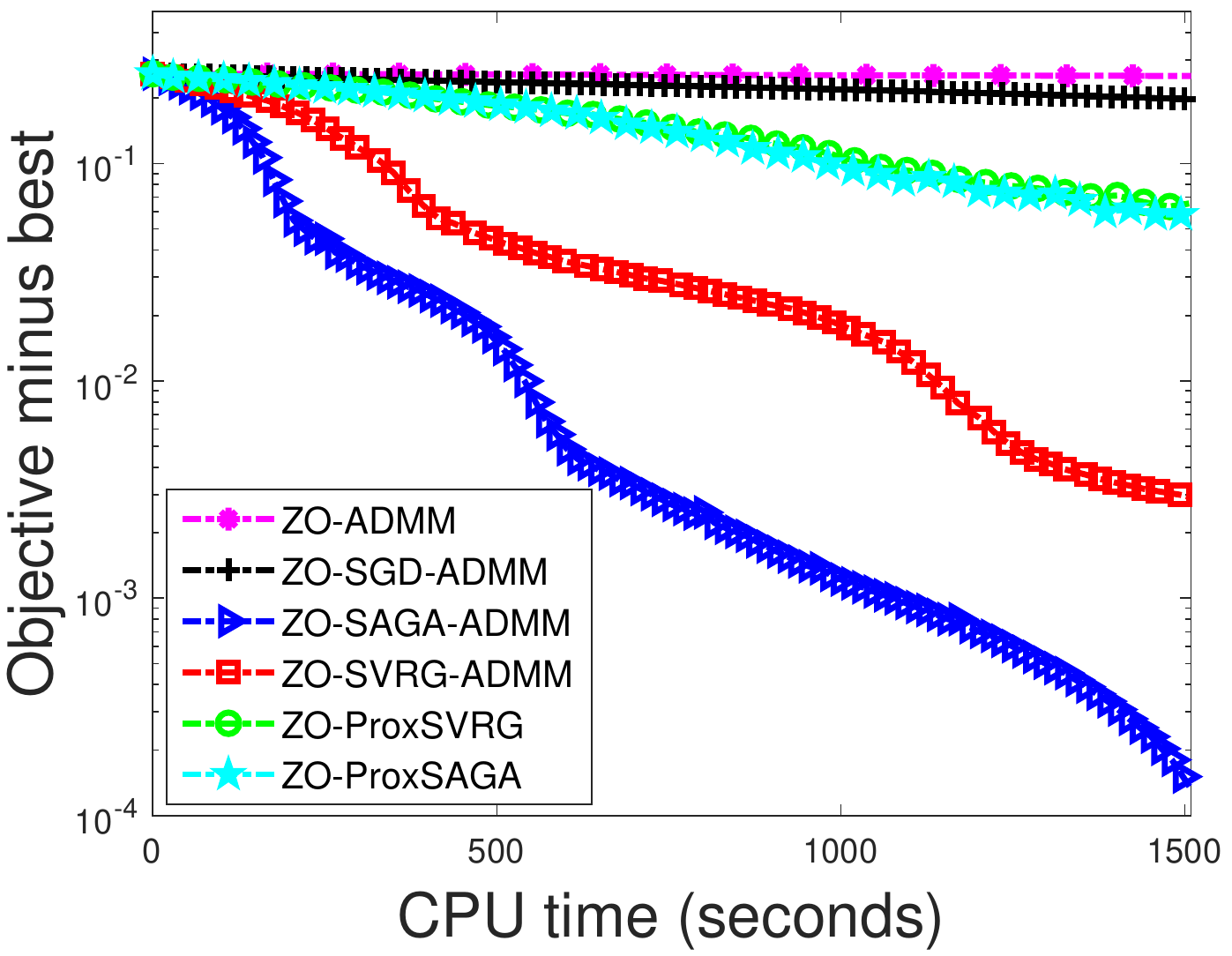}}
\subfigure[covtype.binary]{\includegraphics[width=0.235\textwidth]{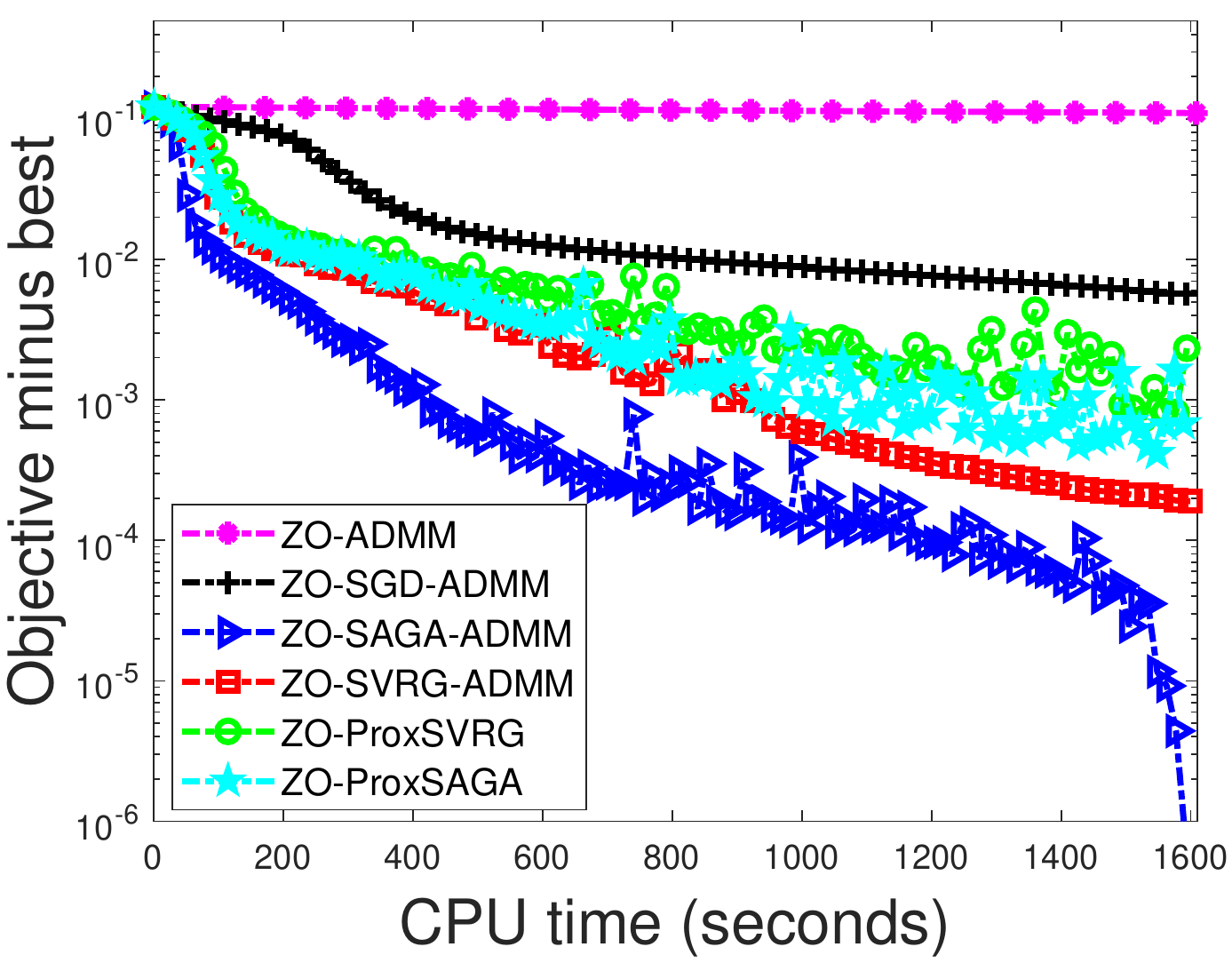}}
\caption{Objective value gaps \emph{versus} CPU time on benchmark datasets.}
\label{fig:1}
\end{figure*}
\section{Experiments}
In this section, we compare our algorithms
(ZO-SVRG-ADMM, ZO-SAGA-ADMM) with the ZO-ProxSVRG, ZO-ProxSAGA \cite{Huang2019faster}, the deterministic zeroth-order ADMM (ZO-ADMM),
and zeroth-order stochastic ADMM (ZO-SGD-ADMM) without variance reduction
on two applications: 1) robust black-box binary classification, and 2) structured adversarial
attacks on black-box DNNs.
\begin{table}[!h]
  \centering
  \begin{tabular}{c|c|c|c}
  \hline
  datasets & \#samples & \#features & \#classes \\ \hline
  \emph{20news} & 16,242 &  100 & 2 \\
  \emph{a9a}   & 32,561 & 123 & 2 \\
  \emph{w8a} & 64,700 & 300 & 2 \\
  \emph{covtype.binary} & 581,012 & 54 & 2\\
  \hline
  \end{tabular}
  \caption{Real Datasets for Black-Box Binary Classification }\label{tab:2}
\end{table}

\begin{figure*}[!t]
  \centering
  % Requires \usepackage{graphicx}
  \includegraphics[width=0.75\textwidth]{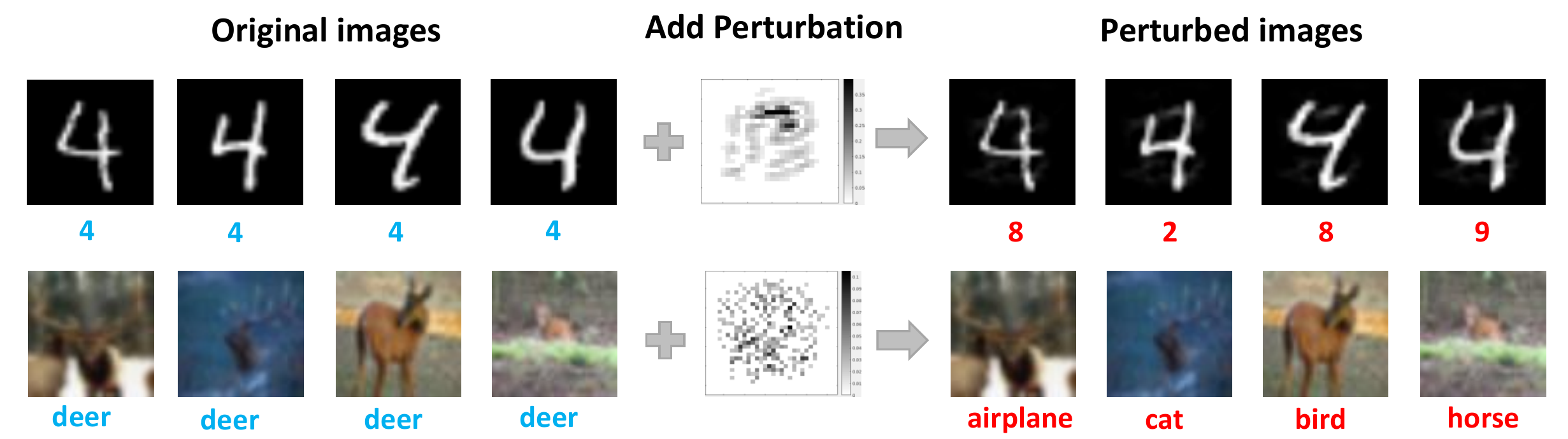}\\
  \caption{Group-sparsity perturbations are learned from MNIST and CIFAR-10 datasets. Blue and red labels denote the initial label,
  and the label after attack, respectively. }\label{fig:3}
\end{figure*}
\subsection{ Robust Black-Box Binary Classification }
In this subsection, we focus on a robust black-box binary classification task
with graph-guided fused lasso.
Given a set of training samples $(a_i,l_i)_{i=1}^n$,
where $a_i\in \mathbb{R}^d$ and $l_i \in \{-1,+1\}$,
we find the optimal parameter $x\in \mathbb{R}^d$ by solving the problem:
\begin{align} \label{eq:8}
 \min_{x\in \mathbb{R}^d} \frac{1}{n}\sum_{i=1}^n f_i(x) + \tau_1\|x\|_1 + \tau_2\|\hat{G}x\|_1,
\end{align}
where $f_i(x)$ is the black-box loss function,
that only returns the function value given an input.
Here, we specify the loss function
$f_i(x)= \frac{\sigma^2}{2}\big(1-\exp(-\frac{(l_i-a_i^Tx)^2}{\sigma^2})\big)$,
which is the \emph{nonconvex} robust correntropy
induced loss \cite{he2011maximum}.
Matrix $\hat{G}$ decodes the sparsity pattern of graph obtained by learning sparse Gaussian graphical model \cite{huang2015joint}.
In the experiment, we give mini-batch size $b=20$,
smoothing parameter $\mu=\frac{1}{d\sqrt{t}}$ and penalty parameters $\tau_1=\tau_2=10^{-5}$.

In the experiment, we use
some public real datasets\footnote{\emph{20news} is from \url{https://cs.nyu.edu/~roweis/data.html};
others are from \url{www.csie.ntu.edu.tw/~cjlin/libsvmtools/datasets/}.},
which are summarized in Table \ref{tab:2}. For each dataset, we use half of the samples as training data
and the rest as testing data. Figure \ref{fig:1} shows that the objective values
of our algorithms faster decrease than the
other algorithms, as the CPU time increases. In particular, our algorithms show better performances than
the zeroth-order proximal algorithms. It is relatively difficult that these zeroth-order proximal methods deal with
the nonsmooth penalties in the problem \eqref{eq:8}.
Thus, we have to use some iterative methods
to solve the proximal operator in these proximal methods.
\subsection{Structured Attacks on Black-Box DNNs }
In this subsection, we use our algorithms to generate adversarial examples to attack the pre-trained DNN models,
whose parameters are hidden from us and only its outputs are accessible.
Moreover, we consider an interesting problem: ``What possible
structures could adversarial perturbations have to fool black-box DNNs ?"
Thus, we use the zeroth-order algorithms to find an universal structured adversarial perturbation $x \in \mathbb{R}^d$
that could fool the samples $\{a_i\in \mathbb{R}^d , \ l_i \in \mathbb{N} \}_{i=1}^n$,
which can be regarded as the following problem:
\begin{align} \label{eq:9}
\min_{x \in \mathbb{R}^d} &\frac{1}{n}\sum\limits_{i=1}^n \max\big\{F_{l_i}(a_i+x) - \max\limits_{j\neq l_i} F_j(a_{i}+x), 0 \big\} \nonumber \\
 &  + \tau_1 \sum_{p=1}^P\sum_{q=1}^Q \|x_{\mathcal{G}_{p,q}}\|_2 + \tau_2 \|x \|_2^2 + \tau_3 h(x),
\end{align}
where $F(a)$ represents the final layer output before softmax of neural network, and $h(x)$ ensures the validness of created adversarial examples.
Specifically, $h(x) = 0$ if $a_i+x \in [0,1]^d$ for all $i\in [n]$ and $\|x\|_\infty \leq \epsilon$, otherwise $h(x) = \infty$.
Following \cite{xu2018structured}, we use the overlapping lasso to obtain structured perturbations.
Here, the overlapping groups $\{\mathcal{G}_{p,q}\},\ p=1,\cdots,P, \ q = 1,\cdots,Q$
generate from dividing an image into
sub-groups of pixels.

In the experiment, we use the pre-trained DNN models on MNIST and CIFAR-10 as the target black-box models, which can attain $99.4\%$ and $80.8\%$ test accuracy, respectively.
For MNIST, we select 20 samples from a target class and set batch size $b=4$; For CIFAR-10, we select 30 samples and set $b=5$.
In the experiment, we set $\mu=\frac{1}{d\sqrt{t}}$, where $d=28 \times 28$ and $d=3\times 32\times 32$ for MNIST and CIFAR-10, respectively.
At the same time, we set the parameters $\epsilon=0.4$, $\tau_1 = 1$, $\tau_2=2$ and $\tau_3=1$.
For both datasets, the kernel size for overlapping group lasso is set to $3 \times 3$ and the stride is one.

Figure \ref{fig:2} shows that attack losses (\emph{i.e.} the first term of the problem \eqref{eq:9}) of
our methods faster decrease than the other methods, as the number of iteration increases.
Figure \ref{fig:3} shows that our algorithms can learn some structure perturbations,
and can successfully attack the corresponding DNNs.
\begin{figure}[htbp]
\centering
\subfigure[\emph{MNIST}]{\includegraphics[width=0.236\textwidth]{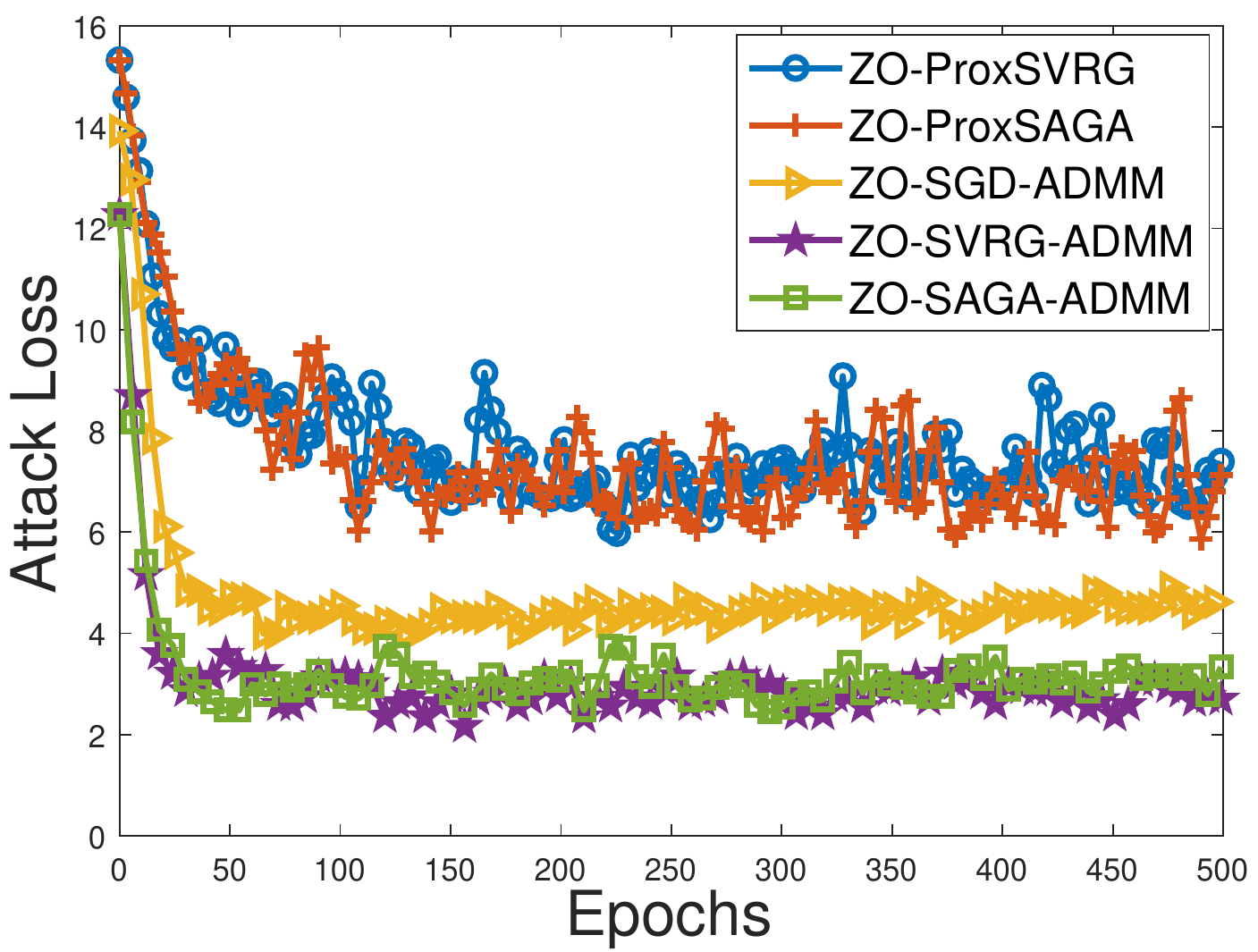}}
\subfigure[\emph{CIFAR-10}]{\includegraphics[width=0.236\textwidth]{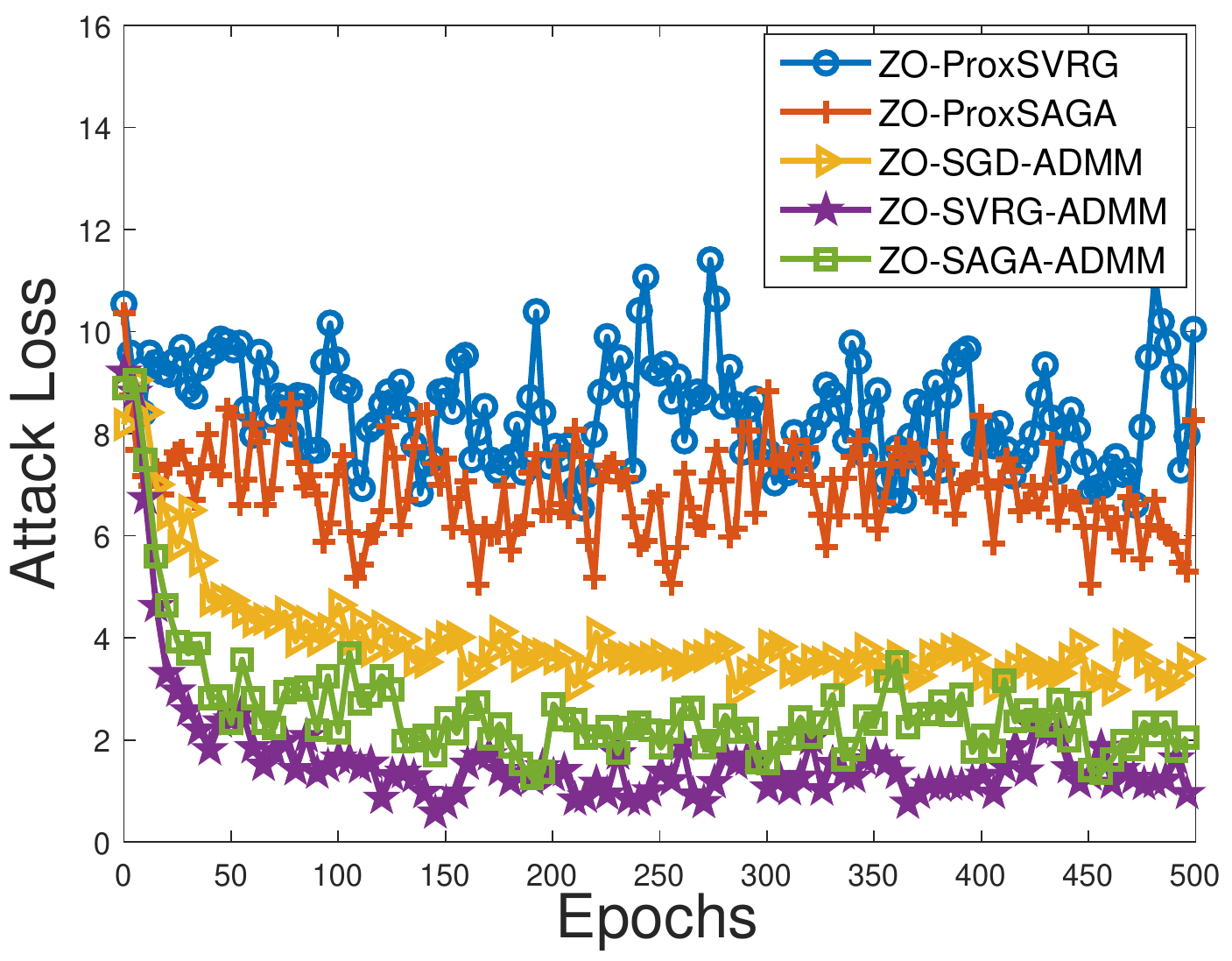}}
\caption{Attack loss on adversarial attacks black-box DNNs. }
\label{fig:2}
\end{figure}
\section{Conclusions}
In the paper, we proposed fast ZO-SVRG-ADMM and ZO-SAGA-ADMM methods based on
the coordinate smoothing gradient estimator, which only uses the objective function
values to optimize. Moreover, we prove that the proposed methods have a convergence rate of $O(\frac{1}{T})$.
In particular, our methods not only reach the existing best convergence rate $O(\frac{1}{T})$ for the nonconvex optimization,
but also are able to effectively solve many machine learning problems with the complex nonsmooth regularizations.
\section*{Acknowledgments}
F.H., S.G., H.H. were partially supported by
U.S. NSF IIS 1836945, IIS 1836938, DBI 1836866, IIS 1845666, IIS 1852606, IIS 1838627, IIS 1837956.
S.C. was partially supported by the NSFC under Grant No. 61806093 and No. 61682281, and the Key Program of NSFC under Grant No. 61732006.

\bibliographystyle{named}
\bibliography{ijcai19}

\begin{thebibliography}{}

\bibitem[\protect\citeauthoryear{Agarwal \bgroup \em et al.\egroup
  }{2010}]{agarwal2010optimal}
Alekh Agarwal, Ofer Dekel, and Lin Xiao.
\newblock Optimal algorithms for online convex optimization with multi-point
  bandit feedback.
\newblock In {\em COLT}, pages 28--40. Citeseer, 2010.

\bibitem[\protect\citeauthoryear{Beck and Teboulle}{2009}]{beck2009fast}
Amir Beck and Marc Teboulle.
\newblock A fast iterative shrinkage-thresholding algorithm for linear inverse
  problems.
\newblock {\em SIAM journal on imaging sciences}, 2(1):183--202, 2009.

\bibitem[\protect\citeauthoryear{Boyd \bgroup \em et al.\egroup
  }{2011}]{boyd2011distributed}
Stephen Boyd, Neal Parikh, Eric Chu, Borja Peleato, and Jonathan Eckstein.
\newblock Distributed optimization and statistical learning via the alternating
  direction method of multipliers.
\newblock {\em Foundations and Trends{\textregistered} in Machine Learning},
  3(1):1--122, 2011.

\bibitem[\protect\citeauthoryear{Chen \bgroup \em et al.\egroup
  }{2017}]{chen2017zoo}
Pin-Yu Chen, Huan Zhang, Yash Sharma, Jinfeng Yi, and Cho-Jui Hsieh.
\newblock Zoo: Zeroth order optimization based black-box attacks to deep neural
  networks without training substitute models.
\newblock In {\em Workshop on Artificial Intelligence and Security}, pages
  15--26. ACM, 2017.

\bibitem[\protect\citeauthoryear{Defazio \bgroup \em et al.\egroup
  }{2014}]{defazio2014saga}
Aaron Defazio, Francis Bach, and Simon Lacoste-Julien.
\newblock Saga: A fast incremental gradient method with support for
  non-strongly convex composite objectives.
\newblock In {\em NIPS}, pages 1646--1654, 2014.

\bibitem[\protect\citeauthoryear{Duchi \bgroup \em et al.\egroup
  }{2015}]{duchi2015optimal}
John~C Duchi, Michael~I Jordan, Martin~J Wainwright, and Andre Wibisono.
\newblock Optimal rates for zero-order convex optimization: The power of two
  function evaluations.
\newblock {\em IEEE TIT}, 61(5):2788--2806, 2015.

\bibitem[\protect\citeauthoryear{Gabay and Mercier}{1976}]{gabay1976dual}
Daniel Gabay and Bertrand Mercier.
\newblock A dual algorithm for the solution of nonlinear variational problems
  via finite element approximation.
\newblock {\em Computers \& Mathematics with Applications}, 2(1):17--40, 1976.

\bibitem[\protect\citeauthoryear{Gao \bgroup \em et al.\egroup
  }{2018}]{gao2018information}
Xiang Gao, Bo~Jiang, and Shuzhong Zhang.
\newblock On the information-adaptive variants of the admm: an iteration
  complexity perspective.
\newblock {\em Journal of Scientific Computing}, 76(1):327--363, 2018.

\bibitem[\protect\citeauthoryear{Ghadimi and Lan}{2013}]{ghadimi2013stochastic}
Saeed Ghadimi and Guanghui Lan.
\newblock Stochastic first-and zeroth-order methods for nonconvex stochastic
  programming.
\newblock {\em SIAM Journal on Optimization}, 23(4):2341--2368, 2013.

\bibitem[\protect\citeauthoryear{Ghadimi \bgroup \em et al.\egroup
  }{2016}]{ghadimi2016mini}
Saeed Ghadimi, Guanghui Lan, and Hongchao Zhang.
\newblock Mini-batch stochastic approximation methods for nonconvex stochastic
  composite optimization.
\newblock {\em Mathematical Programming}, 155(1-2):267--305, 2016.

\bibitem[\protect\citeauthoryear{Gu \bgroup \em et al.\egroup
  }{2018}]{Gu2018faster}
Bin Gu, Zhouyuan Huo, Cheng Deng, and Heng Huang.
\newblock Faster derivative-free stochastic algorithm for shared memory
  machines.
\newblock In {\em ICML}, pages 1807--1816, 2018.

\bibitem[\protect\citeauthoryear{He \bgroup \em et al.\egroup
  }{2011}]{he2011maximum}
Ran He, Wei-Shi Zheng, and Bao-Gang Hu.
\newblock Maximum correntropy criterion for robust face recognition.
\newblock {\em IEEE TPAMI}, 33(8):1561--1576, 2011.

\bibitem[\protect\citeauthoryear{Huang and Chen}{2015}]{huang2015joint}
Feihu Huang and Songcan Chen.
\newblock Joint learning of multiple sparse matrix gaussian graphical models.
\newblock {\em IEEE transactions on neural networks and learning systems},
  26(11):2606--2620, 2015.

\bibitem[\protect\citeauthoryear{Huang \bgroup \em et al.\egroup
  }{2016}]{huang2016stochastic}
Feihu Huang, Songcan Chen, and Zhaosong Lu.
\newblock Stochastic alternating direction method of multipliers with variance
  reduction for nonconvex optimization.
\newblock {\em arXiv preprint arXiv:1610.02758}, 2016.

\bibitem[\protect\citeauthoryear{Huang \bgroup \em et al.\egroup
  }{2019a}]{Huang2019fasterst}
Feihu Huang, Songcan Chen, and Heng Huang.
\newblock Faster stochastic alternating direction method of multipliers for
  nonconvex optimization.
\newblock In {\em ICML}, pages 2839--2848, 2019.

\bibitem[\protect\citeauthoryear{Huang \bgroup \em et al.\egroup
  }{2019b}]{Huang2019faster}
Feihu Huang, Bin Gu, Zhouyuan Huo, Songcan Chen, and Heng Huang.
\newblock Faster gradient-free proximal stochastic methods for nonconvex
  nonsmooth optimization.
\newblock In {\em AAAI}, 2019.

\bibitem[\protect\citeauthoryear{Jiang \bgroup \em et al.\egroup
  }{2019}]{jiang2019structured}
Bo~Jiang, Tianyi Lin, Shiqian Ma, and Shuzhong Zhang.
\newblock Structured nonconvex and nonsmooth optimization: algorithms and
  iteration complexity analysis.
\newblock {\em Computational Optimization and Applications}, 72(1):115--157,
  2019.

\bibitem[\protect\citeauthoryear{Johnson and
  Zhang}{2013}]{johnson2013accelerating}
Rie Johnson and Tong Zhang.
\newblock Accelerating stochastic gradient descent using predictive variance
  reduction.
\newblock In {\em NIPS}, pages 315--323, 2013.

\bibitem[\protect\citeauthoryear{Kim \bgroup \em et al.\egroup
  }{2009}]{kim2009multivariate}
Seyoung Kim, Kyung-Ah Sohn, and Eric~P Xing.
\newblock A multivariate regression approach to association analysis of a
  quantitative trait network.
\newblock {\em Bioinformatics}, 25(12):i204--i212, 2009.

\bibitem[\protect\citeauthoryear{Liu \bgroup \em et al.\egroup
  }{2018a}]{liu2018admm}
Sijia Liu, Jie Chen, Pin-Yu Chen, and Alfred Hero.
\newblock Zeroth-order online alternating direction method of multipliers:
  Convergence analysis and applications.
\newblock In {\em AISTATS}, volume~84, pages 288--297, 2018.

\bibitem[\protect\citeauthoryear{Liu \bgroup \em et al.\egroup
  }{2018b}]{liu2018zeroth}
Sijia Liu, Bhavya Kailkhura, Pin-Yu Chen, Paishun Ting, Shiyu Chang, and Lisa
  Amini.
\newblock Zeroth-order stochastic variance reduction for nonconvex
  optimization.
\newblock In {\em NIPS}, pages 3731--3741, 2018.

\bibitem[\protect\citeauthoryear{Nesterov and
  Spokoiny}{2017}]{Nesterov2017RandomGM}
Yurii Nesterov and Vladimir~G. Spokoiny.
\newblock Random gradient-free minimization of convex functions.
\newblock {\em Foundations of Computational Mathematics}, 17:527--566, 2017.

\bibitem[\protect\citeauthoryear{Ouyang \bgroup \em et al.\egroup
  }{2013}]{ouyang2013stochastic}
Hua Ouyang, Niao He, Long Tran, and Alexander~G Gray.
\newblock Stochastic alternating direction method of multipliers.
\newblock {\em ICML}, 28:80--88, 2013.

\bibitem[\protect\citeauthoryear{Suzuki}{2014}]{suzuki2014stochastic}
Taiji Suzuki.
\newblock Stochastic dual coordinate ascent with alternating direction method
  of multipliers.
\newblock In {\em ICML}, pages 736--744, 2014.

\bibitem[\protect\citeauthoryear{Taylor \bgroup \em et al.\egroup
  }{2016}]{Taylor2016Training}
Gavin Taylor, Ryan Burmeister, Zheng Xu, Bharat Singh, Ankit Patel, and Tom
  Goldstein.
\newblock Training neural networks without gradients: a scalable admm approach.
\newblock In {\em ICML}, pages 2722--2731, 2016.

\bibitem[\protect\citeauthoryear{Wang \bgroup \em et al.\egroup
  }{2015}]{wang2015convergence}
Fenghui Wang, Wenfei Cao, and Zongben Xu.
\newblock Convergence of multi-block bregman admm for nonconvex composite
  problems.
\newblock {\em arXiv preprint arXiv:1505.03063}, 2015.

\bibitem[\protect\citeauthoryear{Xu \bgroup \em et al.\egroup
  }{2018}]{xu2018structured}
Kaidi Xu, Sijia Liu, Pu~Zhao, Pin-Yu Chen, Huan Zhang, Deniz Erdogmus, Yanzhi
  Wang, and Xue Lin.
\newblock Structured adversarial attack: Towards general implementation and
  better interpretability.
\newblock {\em arXiv preprint arXiv:1808.01664}, 2018.

\bibitem[\protect\citeauthoryear{Zheng and Kwok}{2016}]{zheng2016fast}
Shuai Zheng and James~T Kwok.
\newblock Fast-and-light stochastic admm.
\newblock In {\em IJCAI}, pages 2407--2613, 2016.

\end{thebibliography}

\begin{onecolumn}
%\appendices

\begin{appendices}

\section{ Supplementary Materials }
In this section, we study at detail the convergence properties of both the ZO-SVRG-ADMM and ZO-SAGA-ADMM algorithms.

\textbf{Notations:} To make the paper easier to follow, we give
the following notations:
\begin{itemize}
\item $[k] = \{1,2,\cdots,k\}$ and $[j:k] = \{j,j+1,\cdots,k\}$ for all $1\leq j \leq k$.
\item $\|\cdot\|$ denotes the vector $\ell_2$ norm and the matrix spectral norm, respectively.
\item $\|x\|_G=\sqrt{x^TGx}$, where $G$ is a positive definite matrix.
\item $\sigma^A_{\min}$ and $\sigma^A_{\max}$ denote the minimum and maximum eigenvalues of $A^TA$, respectively.
\item $\sigma^{B_j}_{\max}$ denotes the maximum eigenvalues of $B_j^TB_j$ for all $j\in[k]$, and $\sigma^{B}_{\max} = \max_{j=1}^k \sigma^{B_j}_{\max}$.
\item $\sigma_{\min}(G)$ and $\sigma_{\max}(G)$ denote the minimum and maximum eigenvalues of matrix $G$, respectively;
the conditional number $\kappa_G = \frac{\sigma_{\max}(G)}{\sigma_{\min}(G)}$.
\item $\sigma_{\min}(H_j)$ and $\sigma_{\max}(H_j)$ denote the minimum and maximum eigenvalues of matrix $H_j$ for all $j\in[k]$, respectively;
$\sigma_{\min}(H) = \min_{j=1}^k\sigma_{\min}(H_j)$ and $\sigma_{\max}(H) = \max_{j=1}^k\sigma_{\max}(H_j)$.
\item $\mu$ denotes the smoothing parameter of the gradient estimator.
\item $\eta$ denotes the step size of updating variable $x$.
\item $L$ denotes the Lipschitz constant of $\nabla f(x)$.
\item $b$ denotes the mini-batch size of stochastic gradient.
\item $T$, $m$ and $S$ are the total number of iterations, the number of iterations in the inner loop, and the number
                 of iterations in the outer loop, respectively.

\end{itemize}

\subsection{ Theoretical Analysis of the ZO-SVRG-ADMM}
In this subsection, we in detail give the convergence analysis of the ZO-SVRG-ADMM algorithm.
First, we give some useful lemmas as follows:

\begin{lemma} \label{lem:1}
Suppose the sequence $\big\{(x^s_t,y_{[k]}^{s,t},\lambda^s_t)_{t=1}^m\big\}_{s=1}^S$ is generated by Algorithm \ref{alg:1}, the following inequality holds
 \begin{align}
 \mathbb{E}\|\lambda^s_{t+1}-\lambda^s_{t}\|^2 \leq & \frac{18L^2d }{\sigma^A_{\min} b} \big( \|x^s_t - \tilde{x}^s\|^2
 + \|x^s_{t-1} - \tilde{x}^s\|^2\big)
  + \frac{3\sigma^2_{\max}(G)}{\sigma^A_{\min}\eta^2}\|x^s_{t+1}-x^s_t\|^2 \nonumber \\
  & + (\frac{3\sigma^2_{\max}(G)}{\sigma^A_{\min}\eta^2}+\frac{9L^2}{\sigma^A_{\min}})\|x^s_{t}-x^s_{t-1}\|^2
  + \frac{9L^2d^2\mu^2}{\sigma^A_{\min}}.
 \end{align}
\end{lemma}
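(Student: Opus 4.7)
\textbf{Proof plan for Lemma \ref{lem:1}.} The starting point is the first-order optimality condition of the $x$-subproblem. Since $x^s_{t+1}$ minimizes the linearized surrogate $\hat{\mathcal{L}}_{\rho}(x,y_{[k]}^{s,t+1},\lambda^s_t,\hat{g}^s_t)$ in $x$, setting the gradient to zero gives
\[
\hat{g}^s_t + \tfrac{1}{\eta}G(x^s_{t+1}-x^s_t) - A^T\lambda^s_t + \rho A^T\bigl(Ax^s_{t+1} + \sum_{j=1}^k B_jy_j^{s,t+1} - c\bigr) = 0.
\]
Plugging in the dual update $\lambda^s_{t+1} - \lambda^s_t = -\rho(Ax^s_{t+1} + \sum_j B_jy_j^{s,t+1}-c)$ collapses the last two terms into $-A^T\lambda^s_{t+1}$, yielding the key identity $A^T\lambda^s_{t+1} = \hat{g}^s_t + \tfrac{1}{\eta}G(x^s_{t+1}-x^s_t)$. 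Subtracting the same identity at step $t-1$ gives
\[
A^T(\lambda^s_{t+1}-\lambda^s_t) = (\hat{g}^s_t - \hat{g}^s_{t-1}) + \tfrac{1}{\eta}G(x^s_{t+1}-x^s_t) - \tfrac{1}{\eta}G(x^s_t-x^s_{t-1}).
\]

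Next I would invoke the rank assumption on $A$ to pass from $A^T(\lambda^s_{t+1}-\lambda^s_t)$ to $\lambda^s_{t+1}-\lambda^s_t$, picking up the factor $1/\sigma^A_{\min}$. Then apply the elementary bound $\|u+v+w\|^2 \le 3(\|u\|^2+\|v\|^2+\|w\|^2)$ and use $\|Gu\| \le \sigma_{\max}(G)\|u\|$ to absorb the $G$-terms, producing the $\tfrac{3\sigma_{\max}^2(G)}{\sigma^A_{\min}\eta^2}(\|x^s_{t+1}-x^s_t\|^2 + \|x^s_t-x^s_{t-1}\|^2)$ contribution. The remaining piece is $\tfrac{3}{\sigma^A_{\min}}\mathbb{E}\|\hat{g}^s_t-\hat{g}^s_{t-1}\|^2$.

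To control this gradient-difference term, I would insert the true gradients $\nabla f(x^s_t)$ and $\nabla f(x^s_{t-1})$ as anchors and decompose
\[
\hat{g}^s_t - \hat{g}^s_{t-1} = \bigl(\hat{g}^s_t - \nabla f(x^s_t)\bigr) + \bigl(\nabla f(x^s_t) - \nabla f(x^s_{t-1})\bigr) + \bigl(\nabla f(x^s_{t-1}) - \hat{g}^s_{t-1}\bigr).
\]
Another application of $\|\cdot\|^2 \le 3(\cdot)$ together with Assumption 1 bounds the middle piece by $3L^2\|x^s_t-x^s_{t-1}\|^2$, generating the $\tfrac{9L^2}{\sigma^A_{\min}}$ coefficient. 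For the two error terms, I would use the standard decomposition $\mathbb{E}\|\hat{g}^s_\tau - \nabla f(x^s_\tau)\|^2 \le 2\mathbb{E}\|\hat{g}^s_\tau - \hat{\nabla} f(x^s_\tau)\|^2 + 2\|\hat{\nabla} f(x^s_\tau) - \nabla f(x^s_\tau)\|^2$, where the first summand is the SVRG mini-batch variance (bounded by $\tfrac{dL^2}{b}\|x^s_\tau - \tilde{x}^s\|^2$ via the coordinate-smoothed Lipschitz property and $|\mathcal{I}_t|=b$), and the second summand is the bias of the coordinate smoothing estimator, uniformly $O(d^2\mu^2 L^2)$. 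Summing over $\tau \in \{t,t-1\}$ and tracking constants produces both the $\tfrac{18L^2d}{\sigma^A_{\min}b}$ coefficient on $\|x^s_t-\tilde{x}^s\|^2+\|x^s_{t-1}-\tilde{x}^s\|^2$ and the $\tfrac{9L^2d^2\mu^2}{\sigma^A_{\min}}$ residual bias term.

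The main obstacle, in my view, is the simultaneous handling of the two independent error sources: the SVRG-type variance (which shrinks with mini-batch size $b$ and with proximity to the snapshot $\tilde{x}^s$) and the coordinate-smoothing bias (which scales with $d^2\mu^2$ and does not vanish without taking $\mu\to 0$). Because $\hat{g}^s_t$ is an unbiased estimator of $\hat{\nabla} f(x^s_t)$ but not of $\nabla f(x^s_t)$, one cannot simply invoke a standard SVRG lemma; the two-step decomposition through $\hat{\nabla} f$ and then $\nabla f$ is essential, and the constants $18$ and $9$ in the final bound arise precisely from the $3\times 3\times 2$ cascade of Young-type splittings this forces. Everything else is bookkeeping once the optimality identity and this decomposition are in place.
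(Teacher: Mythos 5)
Your plan follows the paper's proof essentially step for step: the optimality condition of the $x$-update combined with the dual update to get $A^T\lambda^s_{t+1} = \hat{g}^s_t + \tfrac{1}{\eta}G(x^s_{t+1}-x^s_t)$, differencing consecutive iterates, passing through $(A^T)^+$ with the factor $1/\sigma^A_{\min}$, a three-term Young splitting, and the same insertion of $\nabla f(x^s_t)$, $\nabla f(x^s_{t-1})$ to bound $\mathbb{E}\|\hat{g}^s_t-\hat{g}^s_{t-1}\|^2$, yielding exactly the stated constants. The only cosmetic difference is that you expand the bound on $\mathbb{E}\|\hat{g}^s_\tau-\nabla f(x^s_\tau)\|^2$ into its variance-plus-smoothing-bias decomposition, whereas the paper simply cites Lemma~1 of \cite{Huang2019faster} for that step; the argument is otherwise the same and correct.
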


\begin{proof}
 Using the optimal condition for the step 9 of Algorithm \ref{alg:1}, we have
 \begin{align}
   \hat{g}^s_t + \frac{1}{\eta}G(x^s_{t+1}-x^s_t) - A^T\lambda^s_t + \rho A^T( Ax^s_{t+1} + \sum_{j=1}^kB_jy_j^{s,t+1} -c) = 0.
 \end{align}
 By the step 11 of Algorithm \ref{alg:1}, we have
 \begin{align} \label{eq:A65}
  A^T\lambda^s_{t+1} = \hat{g}^s_t + \frac{1}{\eta}G(x^s_{t+1}-x^s_t).
 \end{align}
 It follows that
 \begin{align} \label{eq:A66}
  \lambda^s_{t+1} = (A^T)^+ \big( \hat{g}^s_t + \frac{1}{\eta}G(x^s_{t+1}-x^s_t) \big),
 \end{align}
where $(A^T)^+$ is the pseudoinverse of $A^T$. By Assumption 4, \emph{i.e.,} $A$ is a full column matrix, we have $(A^T)^+=A(A^TA)^{-1}$.
Then we have
 \begin{align} \label{eq:A67}
 \mathbb{E} \|\lambda^s_{t+1}-\lambda^s_t\|^2 & = \mathbb{E}\|(A^T)^+ \big( \hat{g}^s_t - \hat{g}^s_{t-1} + \frac{G}{\eta}(x^s_{t+1}-x^s_t) - \frac{G}{\eta}(x^s_{t}-x^s_{t-1}) \big)\|^2 \nonumber \\
 &\leq \frac{1}{\sigma^A_{\min}}\big[3 \mathbb{E}\|\hat{g}^s_t - \hat{g}^s_{t-1}\|^2 +\frac{3\sigma^2_{\max}(G)}{\eta^2}\mathbb{E} \|x^s_{t+1}-x^s_t\|^2
  + \frac{3\sigma^2_{\max}(G)}{\eta^2} \mathbb{E}\|x^s_{t}-x^s_{t-1}\|^2 \big],
 \end{align}
where $\sigma^A_{\min}$ denotes the minimum eigenvalues of $A^TA$.

 Next, considering the upper bound of $\|\hat{g}^s_t - \hat{g}^s_{t-1}\|^2$, we have
 \begin{align} \label{eq:A68}
   \mathbb{E} \|\hat{g}^s_t - \hat{g}^s_{t-1}\|^2 & =  \mathbb{E}\|\hat{g}^s_t - \nabla f(x^s_t) + \nabla f(x^s_t) -  \nabla f(x^s_{t-1}) + \nabla f(x^s_{t-1}) - \hat{g}^s_{t-1}\|^2 \nonumber \\
  & \leq 3 \mathbb{E}\|\hat{g}^s_t - \nabla f(x^s_t)\|^2 + 3 \mathbb{E}\|\nabla f(x^s_t) -  \nabla f(x^s_{t-1})\|^2 + 3 \mathbb{E}\|\nabla f(x^s_{t-1}) - \hat{g}^s_{t-1}\|^2 \nonumber \\
  & \leq \frac{6L^2d}{b}\|x^s_t - \tilde{x}^s\|^2 + \frac{3L^2d^2\mu^2}{2} + \frac{6L^2d}{b}\|x^s_{t-1} - \tilde{x}^s\|^2  + \frac{3L^2d^2\mu^2}{2} +  3\|\nabla f(x^s_t) -  \nabla f(x^s_{t-1})\|^2 \nonumber \\
  & \leq  \frac{6L^2d}{b} \big(\|x^s_t - \tilde{x}^s\|^2 +  \|x^s_{t-1} - \tilde{x}^s\|^2 \big) + 3L^2\|x^s_t - x^s_{t-1}\|^2 + 3L^2d^2\mu^2,
 \end{align}
 where the second inequality holds by Lemma 1 of \cite{Huang2019faster} and the third inequality holds by Assumption 1.
 Finally, combining \eqref{eq:A67} and \eqref{eq:A68}, we obtain the above result.
\end{proof}

\begin{lemma} \label{lem:2}
 Suppose the sequence $\{(x^{s}_t,y_{[k]}^{s,t},\lambda^{s}_t)_{t=1}^m\}_{s=1}^S$ is generated from Algorithm \ref{alg:1},
 and define a \emph{Lyapunov} function:
 \begin{align}
 R^s_t = \mathbb{E}\big[\mathcal{L}_{\rho} (x^s_t,y_{[k]}^{s,t},\lambda^s_t) + (\frac{3\sigma^2_{\max}(G)}{\sigma^A_{\min}\eta^2\rho} + \frac{9L^2}{\sigma^A_{\min}\rho})\|x^s_{t}-x^s_{t-1}\|^2
 + \frac{18 L^2d }{\sigma^A_{\min}\rho b}\|x^s_{t-1}-\tilde{x}^s\|^2 + c_t\|x^s_{t}-\tilde{x}^s\|^2\big]
 \end{align}
 where the positive sequence $\{c_t\}$ satisfies, for $s =1,2,\cdots,S$
 \begin{equation*}
  c_t= \left\{
  \begin{aligned}
  & \frac{36 L^2d }{\sigma^A_{\min}\rho b} +
     \frac{2Ld}{b} + (1+\beta)c_{t+1}, \ 1 \leq t \leq m, \\
  & \\
  & 0, \ t \geq m+1.
  \end{aligned}
  \right.\end{equation*}
It follows that
\begin{align}
\frac{1}{T}\sum_{s=1}^S \sum_{t=0}^{m-1} (\sum_{j=1}^k \|y_j^{s,t}-y_j^{s,t+1}\|^2 + \frac{d}{b}\|x^s_t-\tilde{x}^s\|^2_2 + \|x^s_{t+1}-x^s_t\|^2)
\leq \frac{R^1_0 - R^*}{\gamma T}  + \frac{9L^2d^2\mu^2}{\gamma\sigma^A_{\min}\rho} + \frac{L d^2 \mu^2}{4\gamma},
\end{align}
where $\gamma = \min(\sigma_{\min}^H, L, \chi_t)$, $\chi_t \geq \frac{3\sqrt{71}\kappa_GLd^{l}}{2\alpha} >0 \ (l=0,0.5,1)$,
and $R^*$ denotes a lower bound of $R^s_t$.
\end{lemma}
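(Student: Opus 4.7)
\medskip
\noindent\textbf{Proof plan for Lemma \ref{lem:2}.}
The overall strategy is to prove a one-step descent inequality of the form
\begin{align}
R^s_{t+1}-R^s_t\le -\gamma\Big(\sum_{j=1}^k\|y_j^{s,t+1}-y_j^{s,t}\|^2+\tfrac{d}{b}\|x^s_t-\tilde x^s\|^2+\|x^s_{t+1}-x^s_t\|^2\Big)+\tfrac{9L^2 d^2\mu^2}{\sigma_{\min}^A\rho}+\tfrac{L d^2\mu^2}{4},\nonumber
\end{align}
and then telescope over $t=0,\dots,m-1$ and $s=1,\dots,S$. Because $c_m=0$ and $x^s_0=\tilde x^{s+1}$, the boundary terms match across outer epochs, so the telescoping collapses to $R^1_0-R^{S+1}_0\le R^1_0-R^*$. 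Dividing by $\gamma T$ delivers the stated bound.

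To produce the per-iteration inequality I would analyze the three block updates. First, the optimality of $y_j^{s,t+1}$ together with the quadratic proximal term $\tfrac12\|y_j-y_j^{s,t}\|^2_{H_j}$ gives
\begin{align}
\mathcal L_\rho(x^s_t,y_{[k]}^{s,t+1},\lambda^s_t)-\mathcal L_\rho(x^s_t,y_{[k]}^{s,t},\lambda^s_t)\le -\tfrac{\sigma_{\min}(H)}{2}\sum_{j=1}^k\|y_j^{s,t+1}-y_j^{s,t}\|^2.\nonumber
\end{align}
Second, the $L$-smoothness of $f$ (Assumption 1) combined with the optimality of $x^s_{t+1}$ for the surrogate $\hat{\mathcal L}_\rho$ and a Young's inequality on $\langle \hat g^s_t-\nabla f(x^s_t),x^s_{t+1}-x^s_t\rangle$ yields a bound that contains $-\tfrac{\sigma_{\min}(G)-2\eta L}{2\eta}\|x^s_{t+1}-x^s_t\|^2$ plus the mean-square error $\tfrac{1}{2L}\mathbb E\|\hat g^s_t-\nabla f(x^s_t)\|^2$. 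The SVRG variance estimate (Lemma 1 of \cite{Huang2019faster}) controls this error by $\tfrac{2L d}{b}\|x^s_t-\tilde x^s\|^2+\tfrac{L d^2\mu^2}{2}$, producing the $\tfrac{2Ld}{b}$ coefficient visible in the recursion for $c_t$. Third, the dual step contributes $\tfrac{1}{\rho}\mathbb E\|\lambda^s_{t+1}-\lambda^s_t\|^2$, which is bounded directly by Lemma \ref{lem:1}.

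The crucial design aspect is that every positive quantity on the right-hand side of Lemma \ref{lem:1} has an exact counterpart among the auxiliary pieces of $R^s_t$. The coefficient $\tfrac{3\sigma^2_{\max}(G)}{\sigma_{\min}^A\eta^2\rho}+\tfrac{9L^2}{\sigma_{\min}^A\rho}$ on $\|x^s_t-x^s_{t-1}\|^2$ is chosen precisely so that this residue telescopes against the analogous term at step $t+1$. Similarly, the recursion $c_t=\tfrac{36L^2 d}{\sigma_{\min}^A\rho b}+\tfrac{2Ld}{b}+(1+\beta)c_{t+1}$ is exactly what is needed so that the SVRG variance term $\tfrac{d}{b}\|x^s_t-\tilde x^s\|^2$ is absorbed into the shift $c_t\|x^s_t-\tilde x^s\|^2-c_{t+1}\|x^s_{t+1}-\tilde x^s\|^2$ after a Young expansion $\|x^s_{t+1}-\tilde x^s\|^2\le(1+\beta)\|x^s_t-\tilde x^s\|^2+(1+\beta^{-1})\|x^s_{t+1}-x^s_t\|^2$, while the $\tfrac{18L^2 d}{\sigma_{\min}^A\rho b}\|x^s_{t-1}-\tilde x^s\|^2$ block handles the matching term in Lemma \ref{lem:1}.

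The main obstacle will be certifying the positivity condition $\chi_t>0$, i.e.\ that after all cancellations the net coefficient on $\|x^s_{t+1}-x^s_t\|^2$ is at least $\tfrac{3\sqrt{71}\kappa_G L d^l}{2\alpha}$ under the parameter choices $\eta=\tfrac{\alpha\sigma_{\min}(G)}{9 d^l L}$ and $\rho=\tfrac{6\sqrt{71}\kappa_G d^l L}{\sigma_{\min}^A\alpha}$ to be used in Theorem \ref{th:1}. This reduces to bounding the geometric sum $\sum_{i\ge 0}(1+\beta)^i\big(\tfrac{36L^2 d}{\sigma_{\min}^A\rho b}+\tfrac{2Ld}{b}\big)$ uniformly in $t$; choosing $\beta=1/m$ so that $(1+\beta)^m\le e$ and using $m=n^{1/3}$, $b=d^{1-l}n^{2/3}$ makes $c_t=O(d^l L/\kappa_G)$, which is dominated by the $\tfrac{\sigma_{\min}(G)}{2\eta}$ term. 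Once this verification is complete, summing the three block inequalities, taking expectations, telescoping, and invoking $R^s_t\ge R^*$ gives the desired inequality.
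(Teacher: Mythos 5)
Your plan follows the same route as the paper's proof: the same three block inequalities ($y$-block descent from the $H_j$-proximal update, $x$-block descent from $L$-smoothness plus the surrogate's optimality and Young's inequality on the variance term, dual ascent controlled by Lemma \ref{lem:1}), the same Lyapunov bookkeeping with the Young expansion $\|x^s_{t+1}-\tilde x^s\|^2\le(1+1/\beta)\|x^s_{t+1}-x^s_t\|^2+(1+\beta)\|x^s_t-\tilde x^s\|^2$, the same choice $\beta=1/m$ with $(1+1/m)^m\le e$ to bound $c_t$, and the same verification that $\chi_t$ stays positive under the stated $\eta,\rho,b,m$. Up to harmless constant-factor slips (the paper's variance bound contributes $\frac{Ld}{b}\|x^s_t-\tilde x^s\|^2+\frac{Ld^2\mu^2}{4}$, not $\frac{2Ld}{b}$ and $\frac{Ld^2\mu^2}{2}$; the $y$-descent constant is $\sigma_{\min}(H_j)$, not $\sigma_{\min}(H)/2$), the per-iteration inequality you describe is exactly the paper's \eqref{eq:A81}.

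The genuine gap is the epoch boundary. You assert that ``because $c_m=0$ and $x^s_0=\tilde x^{s+1}$ the boundary terms match, so the telescoping collapses,'' but $c_m\neq 0$ in this construction (only $c_{m+1}=0$), and more importantly the generic one-step inequality cannot simply be chained across epochs: $R^{s+1}_0$ would involve the undefined quantities $\|x^{s+1}_0-x^{s+1}_{-1}\|^2$ and $\|x^{s+1}_{-1}-\tilde x^{s+1}\|^2$, and at the first step of epoch $s+1$ the estimator changes to $\hat g^{s+1}_0=\hat\nabla f(\tilde x^{s+1})=\hat\nabla f(x^s_m)$, so both the variance term and the dual-increment bound differ from the in-epoch case. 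The paper resolves this with a separate cross-epoch argument (its inequalities \eqref{eq:A85}--\eqref{eq:A89}): it bounds $\mathbb{E}\|\hat g^{s+1}_0-\hat g^s_m\|^2\le \frac{L^2d}{b}\|x^s_m-\tilde x^s\|^2$, re-derives the bound on $\|\lambda^{s+1}_1-\lambda^s_m\|^2$, and proves $R^{s+1}_1\le R^s_m-\sigma^H_{\min}\sum_j\|y_j^{s,m}-y_j^{s+1,1}\|^2-\frac{Ld}{b}\|x^s_m-\tilde x^s\|^2-\chi_m\|x^{s+1}_1-x^s_m\|^2+\frac{Ld^2\mu^2}{4}$, which is what makes the telescoping over $s$ legitimate; this step is absent from your plan. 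A secondary omission: you invoke $R^s_t\ge R^*$ without justification, whereas the paper must (and does) prove that $R^s_t$ is bounded below, using the identity $A^T\lambda^s_{t+1}=\hat g^s_t+\frac{G}{\eta}(x^s_{t+1}-x^s_t)$ together with Assumptions 2 and 3 to control the $-\langle\lambda,\cdot\rangle$ term in $\mathcal{L}_\rho$; without this the right-hand side of the claimed bound is not known to be finite or meaningful.
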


\begin{proof}
By the optimal condition of step 8 in Algorithm \ref{alg:1},
we have, for $j\in [k]$
\begin{align}
0 & =(y_j^{s,t}-y_j^{s,t+1})^T\big(\partial \psi_j(y_j^{s,t+1}) - B^T\lambda_t^s + \rho B^T(Ax_t^s + \sum_{i=1}^jB_iy_i^{s,t+1} + \sum_{i=j+1}^kB_iy_i^{s,t}-c) + H_j(y_j^{s,t+1}-y_j^{s,t})\big) \nonumber \\
& \leq \psi_j(y_j^{s,t})- \psi_j(y_j^{s,t+1}) - (\lambda_t^s)^T(B_jy_j^{s,t}-B_jy_j^{s,t+1}) + \rho(By_j^{s,t}-By_j^{s,t+1})^T(Ax_t^s + \sum_{i=1}^jB_iy_i^{s,t+1} + \sum_{i=j+1}^kB_iy_i^{s,t}-c) \nonumber \\
& \quad - \|y_j^{s,t+1}-y_j^{s,t}\|^2_{H_j} \nonumber \\
& = \psi_j(y_j^{s,t})- \psi_j(y_j^{s,t+1}) - (\lambda_t^s)^T(Ax_t^s+\sum_{i=1}^{j-1}B_iy_i^{s,t+1} + \sum_{i=j}^kB_iy_i^{s,t}-c) + (\lambda_t^s)^T(Ax_t^s+\sum_{i=1}^jB_iy_i^{s,t+1}+ \sum_{i=j+1}^kB_iy_i^{s,t}-c) \nonumber \\
& \quad  + \frac{\rho}{2}\|Ax_t^s +\sum_{i=1}^{j-1}B_iy_i^{s,t+1} + \sum_{i=j}^kB_iy_i^{s,t}-c\|^2 - \frac{\rho}{2}\|Ax_t^s+\sum_{i=1}^jB_iy_i^{s,t+1}+ \sum_{i=j+1}^kB_iy_i^{s,t}-c\|^2
  - \|y_j^{s,t+1}-y_j^{s,t}\|^2_{H_j} \nonumber \\
& \quad  -\frac{\rho}{2}\|B_jy_j^{s,t}-B_jy_j^{s,t+1}\|^2  \nonumber \\
& = \underbrace{ f(x_t^s) \!+ \! \sum_{i=1}^{j-1}\psi_i(y_i^{s,t+1})  \!+ \! \sum_{i=j}^{k}\psi_i(y_i^{s,t})  \!- \! (\lambda_t^s)^T(Ax_t^s  \!+ \! \sum_{i=1}^{j-1}B_iy_i^{s,t+1}  \!+ \! \sum_{i=j}^kB_iy_i^{s,t}-c)  \!+ \! \frac{\rho}{2}\|Ax_t^s  \!+ \!\sum_{i=1}^{j-1}B_iy_i^{s,t+1}  \!+ \! \sum_{i=j}^kB_iy_i^{s,t}-c\|^2}_{\mathcal{L}_{\rho} (x_t^s,y_{[j-1]}^{s,t+1},y_{[j:k]}^{s,t},\lambda_t^s)}  \nonumber \\
& \!-\! \big( \underbrace{f(x_t^s) \!+\! \sum_{i=1}^{j}\psi_i(y_i^{s,t+1}) \!+\! \sum_{i=j+1}^{k}\psi_i(y_i^{s,t}) \!-\! (\lambda_t^s)^T(Ax_t^s \!+\! \sum_{i=1}^jB_iy_i^{s,t+1} \!+\! \sum_{i=j+1}^kB_iy_i^{s,t}\!-\!c)
\!+\! \frac{\rho}{2}\|Ax_t^s\!+\!\sum_{i=1}^jB_iy_i^{s,t+1} \!+\! \sum_{i=j+1}^kB_iy_i^{s,t}\!-\!c\|^2}_{\mathcal{L}_{\rho} (x_t^s,y_{[j]}^{s,t+1},y_{[j+1:k]}^{s,t},\lambda_t^s)} \big) \nonumber \\
& \quad - \|y_j^{s,t+1}-y_j^{s,t}\|^2_{H_j} -\frac{\rho}{2}\|B_jy_j^{s,t}-B_jy_j^{s,t+1}\|^2  \nonumber \\
& \leq \mathcal{L}_{\rho} (x_t^s,y_{[j-1]}^{s,t+1},y_{[j:k]}^{s,t},\lambda_t^s) - \mathcal{L}_{\rho} (x_t^s,y_{[j]}^{s,t+1},y_{[j+1:k]}^{s,t},\lambda_t^s)
- \sigma_{\min}(H_j)\|y_j^{s,t}-y_j^{s,t+1}\|^2,
\end{align}
where the first inequality holds by the convexity of function $\psi_j(y)$,
and the second equality follows by applying the equality
$(a-b)^Tb = \frac{1}{2}(\|a\|^2-\|b\|^2-\|a-b\|^2)$ on the term $(By_j^{s,t}-By_j^{s,t+1})^T(Ax_t^s + \sum_{i=1}^jB_iy_i^{s,t+1} + \sum_{i=j+1}^kB_iy_i^{s,t}-c)$.
Thus, we have, for all $j\in[k]$
\begin{align} \label{eq:A10-1}
\mathcal{L}_{\rho} (x_t^s,y_{[j]}^{s,t+1},y_{[j+1:k]}^{s,t},\lambda_t^s) \leq \mathcal{L}_{\rho} (x_t^s,y_{[j-1]}^{s,t+1},y_{[j:k]}^{s,t},\lambda_t^s)
 - \sigma_{\min}(H_j)\|y_j^{s,t}-y_j^{s,t+1}\|^2.
\end{align}
Telescoping inequality \eqref{eq:A10-1} over $j$ from $1$ to $k$, we obtain
\begin{align} \label{eq:A73}
 \mathcal{L}_{\rho} (x_t^s,y^{s,t+1}_{[k]},\lambda_t^s) \leq \mathcal{L}_{\rho} (x_t^s,y^{s,t}_{[k]},\lambda_t^s)
 - \sigma_{\min}^H\sum_{j=1}^k \|y_j^{s,t}-y_j^{s,t+1}\|^2,
\end{align}
where $\sigma_{\min}^H=\min_{j\in[k]}\sigma_{\min}(H_j)$.

By Assumption 1, we have
\begin{align} \label{eq:A74}
0 \leq f(x^s_t) - f(x^s_{t+1}) + \nabla f(x^s_t)^T(x^s_{t+1}-x^s_t) + \frac{L}{2}\|x^s_{t+1}-x^s_t\|^2.
\end{align}
Using the optimal condition of the step 9 in Algorithm \ref{alg:1},
we have
\begin{align} \label{eq:A75}
 0 = (x^s_t-x^s_{t+1})^T \big( \hat{g}^s_t - A^T\lambda^s_t + \rho A^T(Ax^s_{t+1} + \sum_{j=1}^kB_jy_j^{s,t+1}-c) + \frac{G}{\eta}(x^s_{t+1}-x^s_t) \big).
\end{align}
Combining \eqref{eq:A74} and \eqref{eq:A75}, we have
\begin{align}
 0 & \leq f(x^s_t) - f(x^s_{t+1}) + \nabla f(x^s_t)^T(x^s_{t+1}-x^s_t) + \frac{L}{2}\|x^s_{t+1}-x^s_t\|^2 \nonumber \\
 & \quad + (x^s_t-x^s_{t+1})^T \big( \hat{g}^s_t - A^T\lambda^s_t + \rho A^T(Ax^s_{t+1} + \sum_{j=1}^kB_jy_j^{s,t+1}-c) + \frac{G}{\eta}(x^s_{t+1}-x^s_t) \big)  \nonumber \\
 & = f(x^s_t) - f(x^s_{t+1}) + \frac{L}{2}\|x^s_t-x^s_{t+1}\|^2 - \frac{1}{\eta}\|x^s_t - x^s_{t+1}\|^2_G + (x^s_t-x^s_{t+1})^T(\hat{g}^s_t-\nabla f(x^s_t)) \nonumber \\
 & \quad -(\lambda^s_t)^T(Ax^s_t-Ax^s_{t+1}) + \rho(Ax^s_t - Ax^s_{t+1})^T(Ax^s_{t+1} + \sum_{j=1}^kB_jy_j^{s,t+1}-c) \nonumber \\
 & \mathop{=}^{(i)} f(x^s_t) - f(x^s_{t+1}) + \frac{L}{2}\|x^s_t-x^s_{t+1}\|^2 - \frac{1}{\eta}\|x^s_t - x^s_{t+1}\|^2_G
 + (x^s_t-x^s_{t+1})^T(\hat{g}^s_t-\nabla f(x^s_t)) -(\lambda^s_t)^T(Ax^s_t + \sum_{j=1}^kB_jy_j^{s,t+1}-c)\nonumber \\
 & \quad  + (\lambda^s_t)^T(Ax^s_{t+1}+ \sum_{j=1}^kB_jy_j^{s,t+1}-c) + \frac{\rho}{2}\big(\|Ax^s_{t} + \sum_{j=1}^kB_jy_j^{s,t+1}-c\|^2
 - \|Ax^s_{t+1} + \sum_{j=1}^kB_jy_j^{s,t+1}-c\|^2 - \|Ax^s_t - Ax^s_{t+1}\|^2 \big) \nonumber \\
 & = \underbrace{f(x^s_t) + \sum_{j=1}^k \psi_j(x^s_{t+1}) -(\lambda^s_t)^T(Ax^s_t + \sum_{j=1}^kB_jy_j^{s,t+1}-c) + \frac{\rho}{2}\|Ax^s_{t} + \sum_{j=1}^kB_jy_j^{s,t+1}-c\|^2}_{\mathcal{L}_{\rho} (x^s_t,y_{[k]}^{s,t+1},\lambda^s_t)} \nonumber \\
 & \quad -\big(\underbrace{ f(x^s_{t+1}) + \sum_{j=1}^k \psi_j(x^s_{t+1}) -(\lambda^s_t)^T(Ax^s_{t+1} + \sum_{j=1}^kB_jy_j^{s,t+1}-c) + \frac{\rho}{2}\|Ax^s_{t+1} + \sum_{j=1}^kB_jy_j^{s,t+1}-c\|^2}_{\mathcal{L}_{\rho} (x^s_{t+1},y_{[k]}^{s,t+1},\lambda^s_t)} \big) \nonumber \\
 & \quad + \frac{L}{2}\|x^s_t-x^s_{t+1}\|^2 + (x^s_t-x^s_{t+1})^T(\hat{g}^s_t-\nabla f(x^s_t))  -\frac{1}{\eta}\|x^s_t - x^s_{t+1}\|^2_G - \frac{\rho}{2}\|Ax^s_t - Ax^s_{t+1}\|^2 \nonumber \\
 & \leq \mathcal{L}_{\rho} (x^s_t,y_{[k]}^{s,t+1},\lambda^s_t) -  \mathcal{L}_{\rho} (x^s_{t+1},y_{[k]}^{s,t+1},\lambda^s_t)
 - (\frac{\sigma_{\min}(G)}{\eta}+\frac{\rho \sigma^A_{\min}}{2}-\frac{L}{2})\|x^s_t - x^s_{t+1}\|^2 +(x^s_t-x^s_{t+1})^T(\hat{g}^s_t-\nabla f(x^s_t)) \nonumber \\
 & \mathop{\leq}^{(ii)}  \mathcal{L}_{\rho} (x^s_t,y_{[k]}^{s,t+1},\lambda^s_t) -  \mathcal{L}_{\rho} (x^s_{t+1},y_{[k]}^{s,t+1},\lambda^s_t)
 - (\frac{\sigma_{\min}(G)}{\eta}+\frac{\rho \sigma^A_{\min}}{2}-L)\|x^s_t - x^s_{t+1}\|^2 + \frac{1}{2L}\|\hat{g}^s_t-\nabla f(x^s_t)\|^2 \nonumber \\
 & \mathop{\leq}^{(iii)} \mathcal{L}_{\rho} (x^s_t,y_{[k]}^{s,t+1},\lambda^s_t) -  \mathcal{L}_{\rho} (x^s_{t+1},y_{[k]}^{s,t+1},\lambda^s_t)
 - (\frac{\sigma_{\min}(G)}{\eta}+\frac{\rho \sigma^A_{\min}}{2}-L)\|x^s_t - x^s_{t+1}\|^2 +\frac{ L d}{b}\|x^s_t-\tilde{x}^s\|^2 + \frac{L d^2 \mu^2}{4},
\end{align}
where the equality $(i)$ holds by applying the equality
$(a-b)^Tb = \frac{1}{2}(\|a\|^2-\|b\|^2-\|a-b\|^2)$ on the term $(Ax^s_t - Ax^s_{t+1})^T(Ax^s_{t+1}+\sum_{j=1}^kB_jy_j^{s,t+1}-c)$, the inequality
$(ii)$ holds by the inequality $a^Tb \leq \frac{L}{2}\|a\|^2 + \frac{1}{2L}\|b\|^2$,
and the inequality $(iii)$ holds by Lemma 1 of \cite{Huang2019faster}. Thus, we obtain
\begin{align} \label{eq:A77}
\mathcal{L}_{\rho} (x^s_{t+1},y_{[k]}^{s,t+1},\lambda^s_t) \leq & \mathcal{L}_{\rho} (x^s_t,y_{[k]}^{s,t+1},\lambda^s_t) -
(\frac{\sigma_{\min}(G)}{\eta}+\frac{\rho \sigma^A_{\min}}{2}-L)\|x^s_t - x^s_{t+1}\|^2 +\frac{ L d}{b}\|x^s_t-\tilde{x}^s\|^2 + \frac{L d^2 \mu^2}{4}.
\end{align}

Using the step 10 in Algorithm \ref{alg:1}, we have
\begin{align} \label{eq:A78}
\mathcal{L}_{\rho} (x^s_{t+1},y_{[k]}^{s,t+1},\lambda^s_{t+1}) -
\mathcal{L}_{\rho} (x^s_{t+1},y_{[k]}^{s,t+1},\lambda^s_t)
& = \frac{1}{\rho}\|\lambda^s_{t+1}-\lambda^s_t\|^2 \nonumber \\
& \leq  \frac{18 L^2d  }{\sigma^A_{\min} b\rho} \big( \|x^s_t - \tilde{x}^s\|^2 + \|x^s_{t-1} - \tilde{x}^s\|^2\big)
 + \frac{3\sigma^2_{\max}(G)}{\sigma^A_{\min}\eta^2\rho}\|x^s_{t+1}-x^s_t\|^2 \nonumber \\
& \quad + (\frac{3\sigma^2_{\max}(G)}{\sigma^A_{\min}\eta^2\rho}+\frac{9L^2}{\sigma^A_{\min}\rho})\|x^s_{t}-x^s_{t-1}\|^2
  + \frac{9L^2d^2\mu^2}{\sigma^A_{\min}\rho}.
\end{align}
Combining \eqref{eq:A73}, \eqref{eq:A77} and \eqref{eq:A78}, we have
\begin{align}
\mathcal{L}_{\rho} (x^s_{t+1},y_{[k]}^{s,t+1},\lambda^s_{t+1}) & \leq \mathcal{L}_{\rho} (x^s_t,y_{[k]}^{s,t},\lambda^s_t)
- \sigma_{\min}^H\sum_{j=1}^k \|y_j^{s,t}-y_j^{s,t+1}\|^2 - (\frac{\sigma_{\min}(G)}{\eta}+\frac{\rho \sigma^A_{\min}}{2}-L)\|x^s_t - x^s_{t+1}\|^2  \nonumber \\
& \quad +\frac{ L d}{b}\|x^s_t-\tilde{x}^s\|^2 + \frac{18 L^2d }{\sigma^A_{\min} b\rho}
\big( \|x^s_t - \tilde{x}^s\|^2 + \|x^s_{t-1} - \tilde{x}^s\|^2\big)
+ \frac{3\sigma^2_{\max}(G)}{\sigma^A_{\min}\eta^2\rho}\|x^s_{t+1}-x^s_t\|^2 \nonumber \\
& \quad + (\frac{3\sigma^2_{\max}(G)}{\sigma^A_{\min}\eta^2\rho}+\frac{9L^2}{\sigma^A_{\min}\rho})\|x^s_{t}-x^s_{t-1}\|^2
+ \frac{9L^2d^2\mu^2}{\sigma^A_{\min}\rho} + \frac{L d^2 \mu^2}{4}.
\end{align}

Next, we define a \emph{Lyapunov} function $R^s_t$ as follows:
\begin{align} \label{eq:79}
 R^s_t = \mathbb{E}\big[\mathcal{L}_{\rho} (x^s_t,y_{[k]}^{s,t},\lambda^s_t) + (\frac{3\sigma^2_{\max}(G)}{\sigma^A_{\min}\eta^2\rho} + \frac{9L^2}{\sigma^A_{\min}\rho})\|x^s_{t}-x^s_{t-1}\|^2
 + \frac{18 L^2d }{\sigma^A_{\min}\rho b}\|x^s_{t-1}-\tilde{x}^s\|^2 + c_t\|x^s_{t}-\tilde{x}^s\|^2\big].
\end{align}
Considering the upper bound of $\|x^s_{t+1}-\tilde{x}^s\|^2$, we have
\begin{align}  \label{eq:80}
\|x^s_{t+1}-x^s_t + x^s_t - \tilde{x}^s\|^2
& = \|x^s_{t+1}-x^s_t\|^2 + 2(x^s_{t+1}-x^s_t)^T(x^s_t-\tilde{x}^s) +\|x^s_t -\tilde{x}^s\|^2 \nonumber \\
& \leq \|x^s_{t+1}-x^s_t\|^2 + 2\big(\frac{1}{2\beta} \|x^s_{t+1}-x^s_t\|^2 + \frac{\beta}{2}\|x^s_t-\tilde{x}^s\|^2\big)
+ \|x^s_t -\tilde{x}^s\|^2 \nonumber \\
& = (1+1/\beta)\|x^s_{t+1}-x^s_t\|^2 +(1+\beta)\|x^s_t -\tilde{x}^s\|^2,
\end{align}
where the above inequality holds by the
Cauchy-Schwarz inequality with $\beta>0$.
Combining \eqref{eq:79} with \eqref{eq:80}, then we obtain
\begin{align} \label{eq:A81}
R^s_{t+1} & = \mathbb{E}\big[\mathcal{L}_{\rho}(x^s_{t+1},y_{[k]}^{s,t+1},\lambda^s_{t+1}) + (\frac{3\sigma^2_{\max}(G)}{\sigma^A_{\min}\eta^2\rho}+\frac{9L^2}{\sigma^A_{\min}\rho}) \|x^s_{t+1}-x^s_{t}\|^2 + \frac{18 L^2d }{\sigma^A_{\min} b\rho} \|x^s_{t}-\tilde{x}^s\|^2 + c_{t+1}\|x^s_{t+1}-\tilde{x}^s\|^2\big] \nonumber \\
& \leq \mathcal{L}_{\rho} (x^s_t,y_{[k]}^{s,t},\lambda^s_t) + (\frac{3\sigma^2_{\max}(G)}{\sigma^A_{\min}\eta^2\rho}+\frac{9L^2}{\sigma^A_{\min}\rho})\|x^s_{t}-x^s_{t-1}\|^2
+ \frac{18 L^2d }{\sigma^A_{\min} b\rho} \|x^s_{t-1}-\tilde{x}^s\|^2 \nonumber \\
&\quad + \big(\frac{36 L^2d }{\sigma^A_{\min} b\rho} + \frac{2L d}{b}  +(1+\beta)c_{t+1}\big)\|x^s_t-\tilde{x}^s\|^2 - \sigma_{\min}^H\sum_{j=1}^k \|y_j^{s,t}-y_j^{s,t+1}\|^2 \nonumber\\
& \quad - \big( \frac{\sigma_{\min}(G)}{\eta} + \frac{\rho\sigma^A_{\min}}{2} - L - \frac{6\sigma^2_{\max}(G)}{\sigma^A_{\min}\eta^2\rho}-\frac{9L^2}{\sigma^A_{\min}\rho}
- (1+1/\beta)c_{t+1} \big)\|x^s_t - x^s_{t+1}\|^2 \nonumber \\
& \quad- \frac{L d }{ b}\|x^s_{t}-\tilde{x}^s\|^2 + \frac{9L^2d^2\mu^2}{\sigma^A_{\min}\rho} + \frac{L d^2 \mu^2}{4}\nonumber \\
& \leq R^s_t - \sigma_{\min}^H\sum_{j=1}^k \|y_j^{s,t}-y_j^{s,t+1}\|^2 - \frac{L d}{b}\|x^s_{t}-\tilde{x}^s\|^2 - \chi_t \|x^s_t - x^s_{t+1}\|^2  + \frac{9L^2d^2\mu^2}{\sigma^A_{\min}\rho} + \frac{L d^2 \mu^2}{4},
\end{align}
where
$c_t = \frac{36 L^2d }{\sigma^A_{\min} b\rho} + \frac{2L d}{b}  + (1+\beta)c_{t+1}$ and $\chi_t = \frac{\sigma_{\min}(G)}{\eta}+\frac{\rho
\sigma^A_{\min}}{2} - L - \frac{6\sigma^2_{\max}(G)}{\sigma^A_{\min}\eta^2\rho}-\frac{9L^2}{\sigma^A_{\min}\rho} - (1+1/\beta)c_{t+1}$.

Next, we will prove the relationship between $R^{s+1}_1$ and
$R^s_m$. Due to $x^{s+1}_0 = x^s_m = \tilde{x}^{s+1}$, we have
\begin{align}
 \hat{g}^{s+1}_0 = \hat{\nabla} f_{\mathcal{I}}(x^{s+1}_0) - \hat{\nabla}
 f_{\mathcal{I}}(x^{s+1}_0) + \hat{\nabla} f(x^{s+1}_0) = \hat{\nabla}
 f(x^{s+1}_0) = \hat{\nabla} f(x^{s}_m).
\end{align}
It follows that
\begin{align}
 \mathbb{E} \|\hat{g}^{s+1}_0 - \hat{g}^{s}_m\|^2 & = \mathbb{E}\|\hat{\nabla} f(x^{s}_m) - \hat{\nabla} f_{\mathcal{I}}(x^{s}_m)
 + \hat{\nabla} f_{\mathcal{I}}(\tilde{x}^{s}) - \hat{\nabla} f(\tilde{x}^{s})\|^2 \nonumber \\
 & = \| \hat{\nabla} f_{\mathcal{I}}(x^{s}_m)
 - \hat{\nabla} f_{\mathcal{I}}(\tilde{x}^{s})-\mathbb{E}_{\mathcal{I}} [\hat{\nabla} f_{\mathcal{I}}(x^{s}_m)
 - \hat{\nabla} f_{\mathcal{I}}(\tilde{x}^{s})]\|^2 \nonumber \\
 & \leq \| \hat{\nabla} f_{\mathcal{I}}(x^{s}_m) - \hat{\nabla} f_{\mathcal{I}}(\tilde{x}^{s}) \|^2 \nonumber \\
 & \leq \frac{1}{b} \sum_{i\in \mathcal{I}} \| \hat{\nabla} f_{i}(x^{s}_m) - \hat{\nabla} f_{i}(\tilde{x}^{s})\|^2 \nonumber \\
 & = \frac{1}{b} \sum_{i\in \mathcal{I}} \| \hat{\nabla} f_{i}(x^{s}_m) - \hat{\nabla} f_{i}(\tilde{x}^{s})\|^2 \nonumber \\
 & \leq \frac{L^2d}{b}\|x^s_m-\tilde{x}^s\|^2,
\end{align}
where the first inequality holds by the inequality $\mathbb{E}\|\zeta - \mathbb{E}\zeta\|^2 = \mathbb{E}\|\zeta\|^2 - \|\mathbb{E}\zeta\|^2$;
the third inequality holds by the definition of zeroth-order gradient \eqref{eq:5}.

By Lemma \ref{lem:1}, we have
\begin{align} \label{eq:A85}
 \|\lambda^{s+1}_1-\lambda^s_m\|^2 & \leq
 \frac{1}{\sigma^A_{\min}}\|\hat{g}^{s+1}_0 - \hat{g}^{s}_m + \frac{G}{\eta}(x^{s+1}_1-x^{s+1}_0) + \frac{G}{\eta}(x^s_m - x^s_{m-1})
 \|^2 \nonumber \\
 &= \frac{1}{\sigma^A_{\min}}\|\hat{\nabla} f(x^{s}_m) - \hat{g}^{s}_m + \frac{G}{\eta}(x^{s+1}_1-x^{s}_m) + \frac{G}{\eta}(x^s_m - x^s_{m-1})
 \|^2 \nonumber \\
 & \leq \frac{1}{\sigma^A_{\min}} \big( 3\|\hat{\nabla} f(x^{s}_m) - \hat{g}^{s}_m\|^2 +
 \frac{3\sigma^2_{\max}(G)}{\eta^2}\|x^{s+1}_1 - x^s_m\|^2
 + \frac{3\sigma^2_{\max}(G)}{\eta^2}\|x^s_m-x^{s}_{m-1}\|^2 \big)
 \nonumber \\
 & \leq \frac{1}{\sigma^A_{\min}} \big( 3\|\hat{\nabla} f(x^{s}_m) - \hat{g}^{s}_m\|^2 +
 \frac{3\sigma^2_{\max}(G)}{\eta^2}\|x^{s+1}_1 - x^s_m\|^2
 + \frac{3\sigma^2_{\max}(G)}{\eta^2}\|x^s_m-x^{s}_{m-1}\|^2 \big)
 \nonumber \\
 & \leq \frac{1}{\sigma^A_{\min}} \big( \frac{3L^2 d}{b}\|x^s_m-\tilde{x}^s\|^2_2
  + \frac{3\sigma^2_{\max}(G)}{\eta^2}\|x^{s+1}_1 - x^s_m\|^2
 + \frac{3\sigma^2_{\max}(G)}{\eta^2}\|x^s_m-x^{s}_{m-1}\|^2 \big).
\end{align}

Since $x^{s}_m=x^{s+1}_0$, $y_j^{s,m} = y_j^{s+1,0}$ for all $j\in [k]$ and $\lambda^{s}_m=\lambda^{s+1}_0$,
by \eqref{eq:A73}, we have
\begin{align} \label{eq:A86}
\mathcal{L}_{\rho} (x^{s+1}_0,y_{[k]}^{s+1,1},\lambda^{s+1}_0) &\leq \mathcal{L}_{\rho} (x^{s+1}_0,y_{[k]}^{s+1,0},\lambda^{s+1}_0)
 - \sigma_{\min}^H\sum_{j=1}^k \|y_j^{s+1,0}-y_j^{s+1,1}\|^2 \nonumber \\
& = \mathcal{L}_{\rho} (x^{s}_m,y_{[k]}^{s,m},\lambda^{s}_m) - \sigma_{\min}^H\sum_{j=1}^k \|y_j^{s,m}-y_j^{s+1,1}\|^2.
\end{align}
By \eqref{eq:A77}, we have
\begin{align} \label{eq:A87}
\mathcal{L}_{\rho} (x^{s+1}_{1},y_{[k]}^{s+1,1},\lambda^{s+1}_0) \leq
\mathcal{L}_{\rho} (x^{s+1}_0,y_{[k]}^{s+1,1},\lambda^{s+1}_0) -
(\frac{\sigma_{\min}(G)}{\eta}+\frac{\rho \sigma^A_{\min}}{2}-L)\|x^{s+1}_0 - x^{s+1}_{1}\|^2 + \frac{L d^2 \mu^2}{4}.
\end{align}
By \eqref{eq:A78}, we have
\begin{align} \label{eq:A88}
\mathcal{L}_{\rho} (x^{s+1}_{1},y_{[k]}^{s+1,1},\lambda^{s+1}_1) & \leq
\mathcal{L}_{\rho} (x^{s+1}_{1},y_{[k]}^{s+1,1},\lambda^{s+1}_0) + \frac{1}{\rho}\|\lambda^{s+1}_1-\lambda^{s+1}_0\|^2 \nonumber \\
& \leq \mathcal{L}_{\rho} (x^{s+1}_{1},y_{[k]}^{s+1,1},\lambda^{s+1}_0) + \frac{1}{\sigma^A_{\min}\rho} \big( \frac{3 L^2d}{b}\|x^s_m-\tilde{x}^s\|^2_2 \nonumber \\
& \quad + \frac{3\sigma^2_{\max}(G)}{\eta^2}\|x^{s+1}_1 - x^s_m\|^2 + \frac{3\sigma^2_{\max}(G)}{\eta^2}\|x^s_m-x^{s}_{m-1}\|^2 \big).
\end{align}
where the second inequality holds by \eqref{eq:A85}.

Combining \eqref{eq:A86}, \eqref{eq:A87} with \eqref{eq:A88}, we have
\begin{align}
\mathcal{L}_{\rho} (x^{s+1}_{1},y_{[k]}^{s+1,1},\lambda^{s+1}_1) & \leq
\mathcal{L}_{\rho} (x^{s}_m,y_{[k]}^{s,m},\lambda^{s}_m) - \sigma_{\min}^H\sum_{j=1}^k \|y_j^{s,m}-y_j^{s+1,1}\|^2 -(\frac{\sigma_{\min}(G)}{\eta}+\frac{\rho \sigma^A_{\min}}{2}-L)
\|x^{s+1}_0 - x^{s+1}_{1}\|^2 \nonumber \\
& \quad  + \frac{1}{\sigma^A_{\min}\rho} \big( \frac{3L^2d}{b}\|x^s_m-\tilde{x}^s\|^2_2+\frac{3\sigma^2_{\max}(G)}{\eta^2}\|x^{s+1}_1 - x^s_m\|^2
+ \frac{3\sigma^2_{\max}(G)}{\eta^2}\|x^s_m-x^{s}_{m-1}\|^2 \big) + \frac{L d^2\mu^2}{4}.
\end{align}

Therefore, we have
\begin{align} \label{eq:A89}
R^{s+1}_1
& = \mathbb{E}\big[\mathcal{L}_{\rho}(x^{s+1}_{1},y_{[k]}^{s+1,1},\lambda^{s+1}_1)
+ (\frac{3\sigma^2_{\max}(G)}{\sigma^A_{\min}\eta^2\rho} + \frac{9L^2}{\sigma^A_{\min}\rho})\|x^{s+1}_{1}-x^{s+1}_{0}\|^2
+ \frac{18 L^2d }{\sigma^A_{\min} b\rho}\|x^{s+1}_{0}-\tilde{x}^{s+1}\|^2 + c_1\|x^{s+1}_{1}-\tilde{x}^{s+1}\|^2\big] \nonumber \\
& = \mathcal{L}_{\rho} (x^{s+1}_{1},y_{[k]}^{s+1,1},\lambda^{s+1}_1) +
\big(\frac{3\sigma^2_{\max}(G)}{\sigma^A_{\min}\eta^2\rho} + \frac{9L^2}{\sigma^A_{\min}\rho} + c_1 \big) \|x^{s+1}_{1}-x^{s+1}_{0}\|^2 \nonumber \\
& \leq \mathcal{L}_{\rho} (x^{s}_m,y_{[k]}^{s,m},\lambda^{s}_m) +
(\frac{3\sigma^2_{\max}(G)}{\sigma^A_{\min}\eta^2\rho} + \frac{9L^2}{\sigma^A_{\min}\rho})\|x^{s}_{m}-x^{s}_{m-1}\|^2
+ \frac{18 L^2d}{\sigma^A_{\min}\rho b}\|x^s_{m-1}-\tilde{x}^s\|^2_2 + (\frac{36 L^2d }{\sigma^A_{\min}\rho b} + \frac{2Ld}{b}) \|x^s_m-\tilde{x}^s\|^2_2  \nonumber \\
& \quad - \sigma_{\min}^H\sum_{j=1}^k \|y_j^{s,m}-y_j^{s+1,1}\|^2 - \big(\frac{\sigma_{\min}(G)}{\eta}+ \frac{\rho \sigma^A_{\min}}{2} - L -
\frac{6\sigma^2_{\max}(G)}{\sigma^A_{\min}\eta^2\rho} - \frac{9L^2}{\sigma^A_{\min}\rho}- c_1 \big)\|x^{s+1}_1-x^s_m\|^2_2 \nonumber \\
& \quad - \frac{9L^2}{\sigma^A_{\min}\rho} \|x^s_m - x^s_{m-1}\|^2_2 - \frac{18 L^2d}{\sigma^A_{\min}\rho b}
\|x^s_{m-1}-\tilde{x}^s\|^2_2 - (\frac{33 L^2d }{\sigma^A_{\min}\rho b} + \frac{2Ld}{b})\|x^s_m-\tilde{x}^s\|^2_2 + \frac{L d^2\mu^2}{4} \nonumber \\
& \leq R^s_m - \sigma_{\min}^H\sum_{j=1}^k \|y_j^{s,m}-y_j^{s+1,1}\|^2 - \frac{L d}{b}\|x^s_m-\tilde{x}^s\|^2_2 - \chi_m \|x^{s+1}_1-x^s_m\|^2 + \frac{L d^2\mu^2}{4} ,
\end{align}
where $c_m = \frac{36 L^2d }{\sigma^A_{\min}\rho b} + \frac{2Ld}{b}$, and $\chi_m = \frac{\sigma_{\min}(G)}{\eta}+ \frac{\rho \sigma^A_{\min}}{2} - L -
\frac{6\sigma^2_{\max}(G)}{\sigma^A_{\min}\eta^2\rho} - \frac{9L^2}{\sigma^A_{\min}\rho}- c_1$.

Let $c_{m+1} = 0$ and $\beta=\frac{1}{m}$, recursing on $t$, we have
\begin{align}
 c_{t+1} = (\frac{36 L^2d}{\sigma^A_{\min}\rho b} + \frac{2Ld}{ b})\frac{(1+\beta)^{m-t}-1}{\beta}
 & = \frac{md}{b}(\frac{36 L^2}{\sigma^A_{\min}\rho} + 2L ) \big((1+\frac{1}{m})^{m-t}-1\big) \nonumber \\
 & \leq \frac{md}{b}(\frac{36 L^2}{\sigma^A_{\min}\rho} + 2L )(e-1) \leq  \frac{2md}{b}(\frac{36 L^2}{\sigma^A_{\min}\rho} + 2L ),
\end{align}
where the above inequality holds by $(1+\frac{1}{m})^m$ is an increasing function and $\lim_{m\rightarrow \infty}(1+\frac{1}{m})^m=e$.
It follows that, for $t=1,2,\cdots,m$
\begin{align}
 \chi_t & \geq \frac{\sigma_{\min}(G)}{\eta}+\frac{\rho
\sigma^A_{\min}}{2} - L -\frac{6\sigma^2_{\max}(G)}{\sigma^A_{\min}\eta^2\rho}-\frac{9L^2}{\sigma^A_{\min}\rho} - (1+1/\beta)\frac{2md}{b}(\frac{36 L^2}{\sigma^A_{\min}\rho} + 2L ) \nonumber \\
& = \frac{\sigma_{\min}(G)}{\eta}+\frac{\rho
\sigma^A_{\min}}{2} - L -\frac{6\sigma^2_{\max}(G)}{\sigma^A_{\min}\eta^2\rho}-\frac{9L^2}{\sigma^A_{\min}\rho} - (1+m)\frac{2md}{b}(\frac{36 L^2}{\sigma^A_{\min}\rho} + 2L ) \nonumber \\
& \geq \frac{\sigma_{\min}(G)}{\eta}+\frac{\rho
\sigma^A_{\min}}{2} - L -\frac{6\sigma^2_{\max}(G)}{\sigma^A_{\min}\eta^2\rho}-\frac{9L^2}{\sigma^A_{\min}\rho} - \frac{4m^2d}{b}(\frac{36 L^2}{\sigma^A_{\min}\rho} + 2L ) \nonumber \\
& = \underbrace{\frac{\sigma_{\min}(G)}{\eta} - L - \frac{8m^2dL}{b}}_{T_1} + \underbrace{\frac{\rho\sigma^A_{\min}}{2} - \frac{6\sigma^2_{\max}(G)}{\sigma^A_{\min}\eta^2\rho}-\frac{9L^2}{\sigma^A_{\min}\rho}
- \frac{144m^2dL^2}{b\sigma^A_{\min}\rho}}_{T_2}.
\end{align}

When $1\leq d < n^{\frac{1}{3}}$, let $m=n^{\frac{1}{3}}$, $b=dn^{\frac{2}{3}}$ (i.e., $b=d^{1-l}n^{\frac{2}{3}} \ l=0$) and $0< \eta \leq \frac{\sigma_{\min}(G)}{9L}$, we have $T_1\geq 0$.
Further, let $\eta = \frac{\alpha\sigma_{\min}(G)}{9L}\ (0<\alpha \leq 1)$
and $\rho = \frac{6\sqrt{71}\kappa_G L}{\sigma^A_{\min}\alpha}$, we have
\begin{align}
T_2 &= \frac{\rho\sigma^A_{\min}}{2} -\frac{486\kappa_G^2L^2}{\sigma^A_{\min}\rho\alpha^2}-\frac{9L^2}{\sigma^A_{\min}\rho} - \frac{144L^2}{\sigma^A_{\min}\rho} \nonumber \\
& \geq \frac{\rho\sigma^A_{\min}}{2} -\frac{639\kappa_G^2L^2}{\sigma^A_{\min}\rho\alpha^2} \nonumber \\
& = \frac{\rho\sigma^A_{\min}}{4} + \underbrace{\frac{\rho\sigma^A_{\min}}{4} - \frac{639\kappa_G^2L^2}{\sigma^A_{\min}\rho\alpha^2}}_{\geq 0} \nonumber \\
& \geq \frac{3\sqrt{71}\kappa_GL}{2\alpha},
\end{align}
where the  second inequality follows $\rho = \frac{6\sqrt{71}\kappa_GL}{\sigma^A_{\min}\alpha}$. Thus, we have
$\chi_t \geq \frac{3\sqrt{71}\kappa_GL}{2\alpha} >0$ for all $t\in \{1,2,\cdots,m\}$.

When $n^{\frac{1}{3}} \leq d < n^{\frac{2}{3}}$, let $m= n^{\frac{1}{3}}$, $b= d^{\frac{1}{2}} n^{\frac{2}{3}}$ (i.e., $b=d^{1-l}n^{\frac{2}{3}} \ l=0.5$) and $0< \eta \leq \frac{\sigma_{\min}(G)}{9\sqrt{d}L}$,
we have $T_1 \geq 0$.
Further, let $\eta = \frac{\alpha\sigma_{\min}(G)}{9\sqrt{d}L} \ (0 < \alpha \leq 1)$
and $\rho = \frac{6\sqrt{71d}\kappa_G L}{\sigma^A_{\min}\alpha}$, we have
\begin{align}
T_2  &= \frac{\rho\sigma^A_{\min}}{2} -\frac{486d\kappa_G^2L^2}{\sigma^A_{\min}\rho\alpha^2}-\frac{9L^2}{\sigma^A_{\min}\rho} - \frac{144L^2}{\sigma^A_{\min}\rho} \nonumber \\
&\geq \frac{\rho\sigma^A_{\min}}{2} - \frac{639d\kappa_G^2L^2}{\sigma^A_{\min}\rho\alpha^2} \nonumber \\
& = \frac{\rho\sigma^A_{\min}}{4} + \underbrace{\frac{\rho\sigma^A_{\min}}{4} - \frac{639d\kappa_G^2L^2}{\sigma^A_{\min}\rho\alpha^2}}_{\geq 0} \nonumber \\
& \geq \frac{3\sqrt{71d}\kappa_GL}{2\alpha},
\end{align}
where the second equality follows by $\rho = \frac{6\sqrt{71d}\kappa_G L}{\sigma^A_{\min}\alpha}$. Thus, we have
$\chi_t \geq \frac{3\sqrt{71d}\kappa_GL}{2\alpha} >0$.

When $n^{\frac{2}{3}} \leq d $, let $m=n^{\frac{1}{3}}$, $b=n^{\frac{2}{3}}$ (i.e., $b=d^{1-l}n^{\frac{2}{3}} \ l=1$) and $0< \eta \leq \frac{\sigma_{\min}(G)}{9dL}$, we have $T_1 \geq 0$.
Further, let $\eta = \frac{\alpha\sigma_{\min}(G)}{9dL} \ (0 < \alpha \leq 1)$
and $\rho = \frac{6\sqrt{71}\kappa_G d L}{\sigma^A_{\min}\alpha}$, we have
\begin{align}
T_2  &= \frac{\rho\sigma^A_{\min}}{2} -\frac{486d^2\kappa_G^2L^2}{\sigma^A_{\min}\rho\alpha^2}-\frac{9L^2}{\sigma^A_{\min}\rho} - \frac{144L^2}{\sigma^A_{\min}\rho} \nonumber \\
&\geq \frac{\rho\sigma^A_{\min}}{2} - \frac{639d^2\kappa_G^2L^2}{\sigma^A_{\min}\rho\alpha^2} \nonumber \\
& = \frac{\rho\sigma^A_{\min}}{4} + \underbrace{\frac{\rho\sigma^A_{\min}}{4} - \frac{639d^2\kappa_G^2L^2}{\sigma^A_{\min}\rho\alpha^2}}_{\geq 0} \nonumber \\
& \geq \frac{3\sqrt{71}\kappa_G d L}{2\alpha},
\end{align}
where the second equality follows by $\rho = \frac{6\sqrt{71}\kappa_G d L}{\sigma^A_{\min}\alpha}$. Thus, we have
$\chi_t \geq \frac{3\sqrt{71}\kappa_G d L}{2\alpha} >0$.

By Assumption 4. i.e., $A$ is a full column rank matrix,
we have $(A^T)^+ = A(A^T A)^{-1}$.
It follows that $\sigma_{\max}((A^T)^+)^T(A^T)^+) = \sigma_{\max}((A^TA)^{-1}) = \frac{1}{\sigma_{\min}^A}$.
Since , we have
\begin{align}
&\mathcal{L}_{\rho} (x^s_{t+1},y_{[k]}^{s,t+1},\lambda^s_{t+1})
= f(x^s_{t+1}) + \sum_{j=1}^k\psi_j(y_j^{s,t+1}) - \lambda_{t+1}^T(Ax^s_{t+1} + \sum_{j=1}^kB_jy_j^{s,t+1} - c) + \frac{\rho}{2}\|Ax^s_{t+1} + \sum_{j=1}^kB_jy_j^{s,t+1} -c\|^2 \nonumber \\
& = f(x^s_{t+1}) + \sum_{j=1}^k\psi_j(y_j^{s,t+1}) -  \langle(A^T)^+(\hat{g}^s_{t} + \frac{G}{\eta}(x^s_{t+1}-x^s_t)), Ax^s_{t+1} + \sum_{j=1}^kB_jy_j^{s,t+1} -c\rangle + \frac{\rho}{2}\|Ax^s_{t+1} + \sum_{j=1}^kB_jy_j^{s,t+1}-c\|^2 \nonumber \\
& = f(x^s_{t+1}) + \sum_{j=1}^k\psi_j(y_j^{s,t+1}) - \langle(A^T)^+(\hat{g}^s_{t} - \nabla f(x^s_{t}) + \nabla f(x^s_{t})+ \frac{G}{\eta}(x^s_{t+1}-x^s_t)), Ax^s_{t+1} + \sum_{j=1}^kB_jy_j^{s,t+1} -c\rangle  \nonumber \\
& \quad +  \frac{\rho}{2}\|Ax^s_{t+1} + \sum_{j=1}^kB_jy_j^{s,t+1} -c\|^2 \nonumber \\
& \geq f(x^s_{t+1}) + \sum_{j=1}^k\psi_j(y_j^{s,t+1}) - \frac{5}{2\sigma^A_{\min}\rho}\|\hat{g}^s_{t} - \nabla f(x^s_{t})\|^2 - \frac{5}{2\sigma^A_{\min}\rho}\|\nabla f(x^s_{t})\|^2
- \frac{5\sigma^2_{\max}(G)}{2\sigma^A_{\min}\eta^2\rho}\|x^s_{t+1}-x^s_t\|^2 \nonumber \\
& \quad + \frac{\rho}{5}\|Ax^s_{t+1} + \sum_{j=1}^kB_jy_j^{s,t+1} -c\|^2 \nonumber\\
& \geq f(x^s_{t+1}) + \sum_{j=1}^k\psi_j(y_j^{s,t+1}) - \frac{5L^2d}{\sigma^A_{\min}\rho b}\|x^s_t-\tilde{x}^s\|^2_2 - \frac{5L^2 d^2\mu^2}{4\sigma^A_{\min}\rho}
  - \frac{5\delta^2}{2\sigma^A_{\min}\rho} - \frac{5\sigma^2_{\max}(G)}{2\sigma^A_{\min}\eta^2\rho}\|x^s_{t+1}-x^s_t\|^2,
\end{align}
where the first inequality is obtained by applying $ \langle a, b\rangle \leq \frac{1}{2\beta}\|a\|^2 + \frac{\beta}{2}\|b\|^2$ to the terms
$\langle(A^T)^+(\hat{\nabla} f(x_{t}) - \nabla f(x_{t})), Ax_{t+1} + \sum_{j=1}^kB_jy_j^{t+1} -c\rangle$, $\langle(A^T)^+\nabla f(x_{t}), Ax_{t+1} + \sum_{j=1}^kB_jy_j^{t+1} -c\rangle $ and
$\langle(A^T)^+\frac{G}{\eta}(x_{t+1}-x_t), Ax_{t+1} + \sum_{j=1}^kB_jy_j^{t+1} -c\rangle$ with $\beta = \frac{\rho}{5}$, respectively;
the second inequality follows by Lemma 1 of \cite{Huang2019faster} and Assumption 2.
Using the definition of function $R^s_t$ and Assumption 3, we have
\begin{align}
 R^s_{t+1} \geq f^* + \sum_{j=1}^k\psi_j^* - \frac{5L^2d^2\mu^2}{4\sigma^A_{\min}\rho} - \frac{5\delta^2}{2\sigma^A_{\min}\rho}, \ \mbox{for} \ t=0,1,2,\cdots
\end{align}
Thus the function $R^s_t$ is bounded from below. Let $R^*$ denotes a lower bound of $R^s_t$.

Finally, telescoping \eqref{eq:A81} and \eqref{eq:A89} over $t$ from $0$ to $m-1$
and over $s$ from $1$ to $S$, we have
\begin{align}
\frac{1}{T}\sum_{s=1}^S \sum_{t=0}^{m-1} (\sum_{j=1}^k \|y_j^{s,t}-y_j^{s,t+1}\|^2 + \frac{d}{b}\|x^s_t-\tilde{x}^s\|^2_2 + \|x^s_{t+1}-x^s_t\|^2)
\leq \frac{R^1_0 - R^*}{\gamma T}  + \frac{9L^2d^2\mu^2}{\gamma\sigma^A_{\min}\rho} + \frac{L d^2 \mu^2}{4\gamma},
\end{align}
where $\gamma = \min(\sigma_{\min}^H, L, \chi_t)$ and $\chi_t \geq \frac{3\sqrt{71}\kappa_GLd^{l}}{2\alpha} >0 \ (l=0,0.5,1)$.

\end{proof}

Next, based on the above lemmas, we give the convergence analysis of ZO-SVRG-ADMM algorithm. For notational simplicity, let
 \begin{align}
   \nu_1 = k\big(\rho^2\sigma^B_{\max}\sigma^A_{\max} + \rho^2(\sigma^B_{\max})^2 + \sigma^2_{\max}(H)\big), \
   \nu_2 = 6L^2 + \frac{3\sigma^2_{\max}(G)}{\eta^2}, \ \nu_3 = \frac{18L^2 }{\sigma^A_{\min}\rho^2} + \frac{3\sigma^2_{\max}(G)}{\sigma^A_{\min}\eta^2\rho^2}. \nonumber
 \end{align}
\begin{theorem}
Suppose the sequence $\{(x^{s}_t,y_{[k]}^{s,t},\lambda^{s}_t)_{t=1}^m\}_{s=1}^S$ is generated from Algorithm \ref{alg:1}. Let $m=[n^{\frac{1}{3}}]$,
 $b=[d^{1-l} n^{\frac{2}{3}}],\ l \in\{ 0,\frac{1}{2},1\}$, $\eta = \frac{\alpha\sigma_{\min}(G)}{9d^lL} \ (0 < \alpha \leq 1 )$ and
 $\rho = \frac{6\sqrt{71}\kappa_G d^l L}{\sigma^A_{\min}\alpha}$,
then we have
 \begin{align}
\min_{s,t} \mathbb{E}\big[ \mbox{dist}(0,\partial L(x^s_t,y_{[k]}^{s,t},\lambda^s_t))^2\big] \leq O(\frac{d^{2l}}{T}) + O(d^{2+2l}\mu^2),\nonumber
\end{align}
where $\gamma = \min(\sigma_{\min}^H, \chi_{t}, L)$ with $\chi_t \geq \frac{3\sqrt{71}\kappa_Gd^lL}{2\alpha} $, $\nu_{\max}= \max(\nu_2,\nu_3,\nu_4)$ and
 $R^*$ is a lower bound of function $R^s_t$.
It follows that suppose the smoothing parameter $\mu$ and the whole iteration number $T=mS$ satisfy
 \begin{align}
  \mu = O(\frac{\sqrt{\epsilon}}{d^{1+l}}), \quad  T = O(\frac{d^{2l}}{\epsilon}), \nonumber
 \end{align}
 then $(x^{s^*}_{t^*},y_{[k]}^{s^*,t^*},\lambda^{s^*}_{t^*})$
 is an $\epsilon$-approximate solution of \eqref{eq:1}, where $(t^*,s^*) = \mathop{\arg\min}_{t,s}\theta^s_{t}$.
\end{theorem}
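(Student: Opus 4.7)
The plan is to bound $\mathbb{E}[\mathrm{dist}(0,\partial L(x^s_t,y_{[k]}^{s,t},\lambda^s_t))^2]$ block by block using the optimality conditions of the updates, and then invoke Lemma \ref{lem:2} to turn the resulting sum of successive-difference terms into the $O(1/T)$ rate. Concretely, the subdifferential $\partial L$ has $k+2$ blocks: one $\nabla_x L$ block, $k$ blocks of the form $\partial_{y_j} L$, and one constraint block $-(Ax+\sum_j B_jy_j-c)$.

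First, I would handle the constraint block. From step 11 of Algorithm \ref{alg:1}, $Ax^s_t+\sum_{j}B_jy_j^{s,t}-c = \tfrac{1}{\rho}(\lambda^s_{t-1}-\lambda^s_t)$, so Lemma \ref{lem:1} immediately yields a bound in terms of $\|x^s_t-x^s_{t-1}\|^2$, $\|x^s_t-\tilde x^s\|^2$, $\|x^s_{t-1}-\tilde x^s\|^2$, and the bias term $d^2\mu^2$. Next, for each $\partial_{y_j}L$ block, I would use the optimality condition of the step 8 subproblem: $0\in\partial\psi_j(y_j^{s,t+1})-B_j^T\lambda^s_t+\rho B_j^T(Ax^s_t+\sum_{i\le j}B_iy_i^{s,t+1}+\sum_{i>j}B_iy_i^{s,t}-c)+H_j(y_j^{s,t+1}-y_j^{s,t})$. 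Comparing this with $\partial_{y_j}L$ evaluated at the current iterate produces a residual that is a linear combination of $\lambda^s_{t+1}-\lambda^s_t$, $x^s_{t+1}-x^s_t$ (via the $Ax$ term), and the proximal correction $H_j(y_j^{s,t+1}-y_j^{s,t})$, giving a bound of the form $\nu_1\big(\|x^s_{t+1}-x^s_t\|^2+\sum_j\|y_j^{s,t+1}-y_j^{s,t}\|^2\big)$ plus a $\|\lambda^s_{t+1}-\lambda^s_t\|^2$ term absorbed again via Lemma \ref{lem:1}.

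The $\nabla_x L$ block is where the zeroth-order bias enters. Using \eqref{eq:A65}, $A^T\lambda^s_{t+1}=\hat g^s_t+\tfrac{1}{\eta}G(x^s_{t+1}-x^s_t)$, so $\nabla_x L(x^s_t,y_{[k]}^{s,t},\lambda^s_t)=\nabla f(x^s_t)-A^T\lambda^s_t$ can be rewritten as $(\nabla f(x^s_t)-\hat g^s_t)+\tfrac{1}{\eta}G(x^s_{t+1}-x^s_t)+A^T(\lambda^s_{t+1}-\lambda^s_t)$ up to telescoping. The first term is controlled by Lemma 1 of Huang2019faster which gives the SVRG variance bound $\tfrac{L^2d}{b}\|x^s_t-\tilde x^s\|^2+\tfrac{L^2d^2\mu^2}{4}$; the second gives $\tfrac{\sigma^2_{\max}(G)}{\eta^2}\|x^s_{t+1}-x^s_t\|^2$; and the third is handled via Lemma \ref{lem:1} once more. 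Collecting these three block bounds, I would obtain
\begin{align}
\mathbb{E}[\mathrm{dist}(0,\partial L(x^s_t,y_{[k]}^{s,t},\lambda^s_t))^2]\le \nu_{\max}\,\theta^s_t + O(d^2\mu^2), \nonumber
\end{align}
where $\nu_{\max}$ depends polynomially on $\nu_1,\nu_2,\nu_3$.

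Finally, averaging over $s,t$ and invoking Lemma \ref{lem:2}, the right side becomes $\nu_{\max}\big(\tfrac{R^1_0-R^*}{\gamma T}+\tfrac{9L^2d^2\mu^2}{\gamma\sigma^A_{\min}\rho}+\tfrac{Ld^2\mu^2}{4\gamma}\big)+O(d^2\mu^2)$. Substituting the prescribed choices $m=n^{1/3}$, $b=d^{1-l}n^{2/3}$, $\eta=\Theta(d^{-l})$, $\rho=\Theta(d^l)$ makes $\nu_{\max}/\gamma=O(d^{2l})$ (this is the scaling driven by $\sigma^2_{\max}(G)/\eta^2$ and $\rho$ in the bounds), yielding the $O(\tilde\nu d^{2l}/T)+O(d^{2+2l}\mu^2)$ claim. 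The $\epsilon$-approximate stationary point statement then follows by solving $d^{2l}/T\le\epsilon$ and $d^{2+2l}\mu^2\le\epsilon$. The main obstacle I anticipate is the bookkeeping in the $\nabla_x L$ block: ensuring that the $\|x^s_{t+1}-x^s_t\|^2$, $\|x^s_t-x^s_{t-1}\|^2$, and $\|x^s_t-\tilde x^s\|^2$ coefficients produced by combining the gradient-bias bound, Lemma \ref{lem:1}, and the $G/\eta$ term line up exactly with the terms already present in $\theta^s_t$ so that Lemma \ref{lem:2} can be applied without losing the $d^{2l}$ scaling.
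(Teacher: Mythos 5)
Your proposal is correct and follows essentially the same route as the paper: bound the $\nabla_x L$, $\partial_{y_j}L$, and constraint blocks via the update optimality conditions, the relation $A^T\lambda^s_{t+1}=\hat g^s_t+\tfrac{1}{\eta}G(x^s_{t+1}-x^s_t)$, the zeroth-order variance bound, and Lemma \ref{lem:1}, collect everything into $\nu_{\max}\theta^s_t+O(d^2\mu^2)$, and then apply Lemma \ref{lem:2} with the stated parameter choices to get $O(d^{2l}/T)+O(d^{2+2l}\mu^2)$. The only cosmetic difference is that you retain explicit $\lambda^s_{t+1}-\lambda^s_t$ terms (re-absorbed via Lemma \ref{lem:1}) in the $y$- and $x$-blocks, whereas the paper's substitution of the dual update makes the $\lambda$-difference cancel exactly in the $y_j$-block; this only affects constants, not the claimed rate.
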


\begin{proof}
First, we define a useful variable $\theta^s_{t} = \mathbb{E}\big[\|x^s_{t+1}-x^s_{t}\|^2 + \|x^s_{t}-x^s_{t-1}\|^2 + \frac{d}{b}(\|x^s_{t}-\tilde{x}^s\|^2 + \|x^s_{t-1}-\tilde{x}^s\|^2 )
+ \sum_{j=1}^k \|y_j^{s,t}-y_j^{s,t+1}\|^2 \big]$.
By the step 8 of Algorithm \ref{alg:1}, we have, for all $i\in [k]$
\begin{align} \label{eq:A90}
  \mathbb{E}\big[\mbox{dist}(0,\partial_{y_j} L(x,y_{[k]},\lambda))^2\big]_{s,t+1} & = \mathbb{E}\big[\mbox{dist} (0, \partial \psi_j(y_j^{s,t+1})-B_j^T\lambda^s_{t+1})^2\big] \nonumber \\
 & = \|B_j^T\lambda^s_t -\rho B_j^T(Ax^s_t + \sum_{i=1}^jB_iy_i^{s,t+1} + \sum_{i=j+1}^k B_iy_i^{s,t} -c) - H_j(y_j^{s,t+1}-y_j^{s,t}) -B_j^T\lambda^s_{t+1}\|^2 \nonumber \\
 & = \|\rho B_j^TA(x^s_{t+1}-x^s_{t}) + \rho B_j^T \sum_{i=j+1}^k B_i (y_i^{s,t+1}-y_i^{s,t})- H_j(y_j^{s,t+1}-y_j^{s,t}) \|^2 \nonumber \\
 & \leq k\rho^2\sigma^{B_j}_{\max}\sigma^A_{\max}\|x^s_{t+1}-x^s_t\|^2 + k\rho^2\sigma^{B_j}_{\max}\sum_{i=j+1}^k \sigma^{B_i}_{\max}\|y_i^{s,t+1}-y_i^{s,t}\|^2 \nonumber \\
 & \quad + k\sigma^2_{\max}(H_j)\|y_j^{s,t+1}-y_j^{s,t}\|^2\nonumber \\
 & \leq k\big(\rho^2\sigma^B_{\max}\sigma^A_{\max} + \rho^2(\sigma^B_{\max})^2 + \sigma^2_{\max}(H)\big) \theta^s_{t},
\end{align}
where the first inequality follows by the inequality $\|\frac{1}{n}\sum_{i=1}^n z_i\|^2 \leq \frac{1}{n}\sum_{i=1}^n \|z_i\|^2$.

By the step 9 of Algorithm \ref{alg:1}, we have
\begin{align} \label{eq:A91}
 \mathbb{E}[\mbox{dist}(0,\nabla_x L(x,y_{[k]},\lambda))]_{s,t+1} & = \mathbb{E}\|A^T\lambda^s_{t+1}-\nabla f(x^s_{t+1})\|^2  \nonumber \\
 & = \mathbb{E}\|\hat{g}^s_t - \nabla f(x^s_{t+1}) - \frac{G}{\eta} (x^s_t-x^s_{t+1})\|^2 \nonumber \\
 & = \mathbb{E}\|\hat{g}^s_t - \nabla f(x^s_{t}) +\nabla f(x^s_{t})- \nabla f(x^s_{t+1})
  - \frac{G}{\eta}(x^s_t-x^s_{t+1})\|^2  \nonumber \\
 & \leq  \frac{6 L^2 d}{b}\|x^s_t-\tilde{x}^s\|^2 + 3(L^2+ \frac{\sigma^2_{\max}(G)}{\eta^2})\|x^s_t-x^s_{t+1}\|^2
 + \frac{3L^2d^2\mu^2}{2}  \nonumber \\
 & \leq \big( 6L^2 + \frac{3\sigma^2_{\max}(G)}{\eta^2} \big)\theta^s_{t} + \frac{3L^2d^2\mu^2}{2}.
\end{align}

By the step 10 of Algorithm \ref{alg:1}, we have
\begin{align}\label{eq:A92}
 \mathbb{E}[\mbox{dist}(0,\nabla_{\lambda} L(x,y_{[k]},\lambda))]_{s,t+1} & = \mathbb{E}\|Ax^s_{t+1}+By^s_{t+1}-c\|^2 \nonumber \\
 &= \frac{1}{\rho^2} \mathbb{E} \|\lambda^s_{t+1}-\lambda^s_t\|^2  \nonumber \\
 & \leq \frac{18 L^2d }{\sigma^A_{\min}\rho^2 b} \big( \|x^s_t - \tilde{x}^s\|^2
  + \|x^s_{t-1} - \tilde{x}^s\|^2\big) + \frac{3\sigma^2_{\max}(G)}{\sigma^A_{\min}\eta^2\rho^2}\|x^s_{t+1}-x^s_t\|^2 \nonumber \\
 & \quad + \frac{3(\sigma^2_{\max}(G) + 3L^2\eta^2)}{\sigma^A_{\min}\eta^2\rho^2}\|x^s_{t}-x^s_{t-1}\|^2
  + \frac{9L^2d^2\mu^2}{\sigma^A_{\min}\rho^2}\nonumber \\
 & \leq \big( \frac{18L^2 }{\sigma^A_{\min}\rho^2}
 + \frac{3\sigma^2_{\max}(G)}{\sigma^A_{\min}\eta^2\rho^2} \big)\theta^s_{t} + \frac{9L^2d^2\mu^2}{\sigma^A_{\min}\rho^2}. \nonumber \\
\end{align}

Next, combining the above inequalities \eqref{eq:A90}, \eqref{eq:A91} and \eqref{eq:A92}, we have
\begin{align}
\min_{s,t} \mathbb{E}\big[ \mbox{dist}(0,\partial L(x^s_t,y_{[k]}^{s,t},\lambda^s_t))^2\big]
& \leq \frac{1}{T} \sum_{s=1}^S \sum_{t=1}^m \mathbb{E}\big[ \mbox{dist}(0,\partial L(x^s_t,y_{[k]}^{s,t},\lambda^s_t))^2\big] \nonumber \\
& \leq \frac{\nu_{\max}}{T} \sum_{s=1}^S \sum_{t=0}^{m-1}\theta^s_t
+ \frac{3L^2 d^2 \mu^2}{2} + \frac{9L^2d^2\mu^2}{\sigma^A_{\min}\rho^2} \nonumber \\
& \leq \frac{2\nu_{\max}(R^1_0 - R^*)}{\gamma T}  + \frac{18\nu_{\max}L^2d^2\mu^2}{ \gamma \sigma^A_{\min}\rho} + \frac{\nu_{\max}L d^2 \mu^2}{2\gamma}
+ \frac{3L^2 d^2 \mu^2}{2} + \frac{9L^2d^2\mu^2}{\sigma^A_{\min}\rho^2} \nonumber \\
& = \frac{2\nu_{\max}(R^1_0 - R^*)}{\gamma T}  + \big( \frac{18\nu_{\max}L}{ \gamma \sigma^A_{\min}\rho} + \frac{\nu_{\max}L}{2\gamma}
+ \frac{3L}{2} + \frac{9L}{\sigma^A_{\min}\rho^2} \big)Ld^2\mu^2
\end{align}
where the third inequality holds by Lemma \ref{lem:2}, $\nu_{\max}= \max(\nu_1,\nu_2,\nu_3)$, $\gamma = \min(\sigma_{\min}^H, \chi_{t}, L)$, and $\chi_t \geq \frac{3\sqrt{71}\kappa_GLd^{l}}{2\alpha} >0 \ (l=0,0.5,1)$.

Given $\eta = \frac{\alpha\sigma_{\min}(G)}{9d^lL}\ (0< \alpha \leq 1)$ and $\rho = \frac{6\sqrt{71}\kappa_G Ld^l}{\sigma^A_{\min}\alpha}$,
it is easy verifies that $\gamma = O(1) $ and $\nu_{\max} = O(d^{2l})$, which are independent on $n$ and $d$.
Thus, we obtain
\begin{align}
\min_{s,t} \mathbb{E}\big[ \mbox{dist}(0,\partial L(x^s_t, y_{[k]}^{s,t}, \lambda^s_t))^2\big]  \leq O(\frac{d^{2l}}{T})  + O(d^{2+2l}\mu^2).
\end{align}

\end{proof}

\subsection{ Theoretical Analysis of the ZO-SAGA-ADMM }
In this subsection, we in detail give the convergence analysis of the ZO-SAGA-ADMM algorithm.
We begin with giving some useful lemmas as follows:

\begin{lemma} \label{lem:3}
 Suppose the sequence $\{x_t,y_{[k]}^t,\lambda_t\}_{t=1}^T$ is generated by Algorithm \ref{alg:2}. The following inequality holds
 \begin{align}
 \mathbb{E}\|\lambda_{t+1}-\lambda_{t}\|^2 \leq & \frac{18   L^2d }{\sigma^A_{\min} b} \frac{1}{n}\sum_{i=1}^n \big( \|x_t - z^t_i\|^2
 + \|x_{t-1} - z^{t-1}_i\|^2\big)
  + \frac{3\sigma^2_{\max}(G)}{\sigma^A_{\min}\eta^2}\|x_{t+1}-x_t\|^2 \nonumber \\
  & + \frac{3(\sigma^2_{\max}(G) + 3L^2\eta^2)}{\sigma^A_{\min}\eta^2}\|x_{t}-x_{t-1}\|^2
  + \frac{9L^2d^2\mu^2}{\sigma^A_{\min}}.
 \end{align}
\end{lemma}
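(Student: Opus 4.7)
The plan is to mirror the proof of Lemma~\ref{lem:1} line for line, replacing the SVRG anchor $\tilde{x}^s$ by the SAGA table entries $\{z_i^t\}$. First I would use the optimality condition of step 7 of Algorithm~\ref{alg:2}, namely $\hat{g}_t + \frac{1}{\eta}G(x_{t+1}-x_t) - A^T\lambda_t + \rho A^T(Ax_{t+1} + \sum_j B_j y_j^{t+1} - c) = 0$, together with the dual update in step 8, to obtain $A^T\lambda_{t+1} = \hat{g}_t + \frac{1}{\eta}G(x_{t+1}-x_t)$. Because $A$ has full column rank (Assumption 4), $(A^T)^+ = A(A^TA)^{-1}$ and $\|(A^T)^+ u\|^2 \leq \frac{1}{\sigma_{\min}^A}\|u\|^2$, so subtracting consecutive instances and invoking $\|a+b+c\|^2 \leq 3(\|a\|^2+\|b\|^2+\|c\|^2)$ yields
\begin{align*}
\mathbb{E}\|\lambda_{t+1}-\lambda_t\|^2 \leq \frac{1}{\sigma_{\min}^A}\Bigl[3\,\mathbb{E}\|\hat{g}_t-\hat{g}_{t-1}\|^2 + \frac{3\sigma^2_{\max}(G)}{\eta^2}\mathbb{E}\|x_{t+1}-x_t\|^2 + \frac{3\sigma^2_{\max}(G)}{\eta^2}\mathbb{E}\|x_t-x_{t-1}\|^2\Bigr].
\end{align*}
This first block of the argument is essentially identical to \eqref{eq:A65}--\eqref{eq:A67}.

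The main work is controlling $\mathbb{E}\|\hat{g}_t - \hat{g}_{t-1}\|^2$ for the SAGA estimator $\hat{g}_t = \frac{1}{b}\sum_{i_t\in\mathcal{I}_t}(\hat{\nabla}f_{i_t}(x_t) - \hat{\nabla}f_{i_t}(z_{i_t}^t)) + \hat{\phi}_t$. I would decompose
\[
\hat{g}_t - \hat{g}_{t-1} = (\hat{g}_t - \nabla f(x_t)) + (\nabla f(x_t) - \nabla f(x_{t-1})) + (\nabla f(x_{t-1}) - \hat{g}_{t-1}),
\]
apply $\|\cdot\|^2 \leq 3(\cdot)$ again, and treat the three pieces separately. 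The middle term is bounded by $L^2\|x_t-x_{t-1}\|^2$ via Assumption~1. For each SAGA bias term $\|\hat{g}_t - \nabla f(x_t)\|^2$, I would invoke Lemma~1 of \cite{Huang2019faster} (the same coordinate-smoothing bound used in the SVRG case, now applied per coordinate of the SAGA table) to obtain $\mathbb{E}\|\hat{g}_t - \nabla f(x_t)\|^2 \leq \frac{2L^2 d}{b n}\sum_{i=1}^n\|x_t - z_i^t\|^2 + \frac{L^2 d^2 \mu^2}{2}$, and analogously for $t-1$. Summing the three pieces gives
\[
\mathbb{E}\|\hat{g}_t-\hat{g}_{t-1}\|^2 \leq \frac{6L^2 d}{b n}\sum_{i=1}^n\bigl(\|x_t - z_i^t\|^2 + \|x_{t-1} - z_i^{t-1}\|^2\bigr) + 3L^2\|x_t - x_{t-1}\|^2 + 3L^2 d^2\mu^2.
\]

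Finally I would substitute this into the previous display and collect terms. The $3L^2\|x_t-x_{t-1}\|^2$ from the middle gradient difference combines with the $\frac{3\sigma^2_{\max}(G)}{\eta^2}\|x_t-x_{t-1}\|^2$ coming from the $G$-contribution to produce exactly the stated coefficient $\frac{3(\sigma^2_{\max}(G)+3L^2\eta^2)}{\sigma_{\min}^A\eta^2}$, while $3L^2 d^2\mu^2/\sigma_{\min}^A$ scales to $\frac{9L^2 d^2 \mu^2}{\sigma_{\min}^A}$ and the table term contributes $\frac{18 L^2 d}{\sigma_{\min}^A b}\cdot\frac{1}{n}\sum_i(\|x_t-z_i^t\|^2 + \|x_{t-1}-z_i^{t-1}\|^2)$, matching the claim. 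The only genuine obstacle is the SAGA-specific bias control: in SVRG all control variates share a single anchor $\tilde{x}^s$, but here each coordinate-smoothed component is anchored at its own stored $z_i^t$, so the bound must carry the average $\frac{1}{n}\sum_i\|x_t-z_i^t\|^2$ rather than a single squared distance. This is precisely why the Lyapunov function $\Omega_t$ in the subsequent SAGA theorem averages the $z$-distances over $i$; apart from this bookkeeping, the structure is strictly parallel to the proof of Lemma~\ref{lem:1}.
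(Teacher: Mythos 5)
Your proposal matches the paper's own proof essentially step for step: the same optimality/dual-update argument giving $A^T\lambda_{t+1}=\hat g_t+\frac{G}{\eta}(x_{t+1}-x_t)$, the same pseudoinverse bound via $\sigma^A_{\min}$, the same three-term decomposition of $\hat g_t-\hat g_{t-1}$, and the same SAGA variance bound carrying the averaged table distances $\frac{1}{n}\sum_i\|x_t-z_i^t\|^2$, with all constants assembling correctly. The only cosmetic difference is that the paper invokes Lemma~3 of \cite{Huang2019faster} (the SAGA-estimator version) for the bias term rather than Lemma~1, which is exactly the adaptation you describe.
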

\begin{proof}
 By the optimize condition of the the step 7 in Algorithm \ref{alg:2}, we have
 \begin{align}
   \hat{g}_t + \frac{G}{\eta}(x_{t+1}-x_t) - A^T\lambda_t + \rho A^T(Ax_{t+1}+\sum_{j=1}^kB_jy_j^{t+1}-c) = 0.
 \end{align}
 Using the step 8 of Algorithm \ref{alg:2}, then we have
 \begin{align}
  A^T\lambda_{t+1} = \hat{g}_t + \frac{G}{\eta}(x_{t+1}-x_t).
 \end{align}
  It follows that
 \begin{align} \label{eq:A12-1}
  \lambda_{t+1} = (A^T)^+ \big( \hat{g}_t + \frac{G}{\eta}(x_{t+1}-x_t) \big),
 \end{align}
where $(A^T)^+$ is the pseudoinverse of $A^T$. By Assumption 4, \emph{i.e.,} $A$ is a full column matrix, we have $(A^T)^+=A(A^TA)^{-1}$.
Then we have
\begin{align} \label{eq:A12-2}
\mathbb{E} \|\lambda_{t+1}-\lambda_t\|^2 & = \mathbb{E}\|(A^T)^+ \big(  \hat{g}_t - \hat{g}_{t-1} + \frac{G}{\eta}(x_{t+1}-x_t) - \frac{G}{\eta}(x_{t}-x_{t-1})\big)\|^2 \nonumber \\
 & \leq \frac{1}{\sigma^A_{\min}}\big[3 \mathbb{E}\|\hat{g}_t - \hat{g}_{t-1}\|^2 + \frac{3 \mathbb{E}\sigma^2_{\max}(G)}{\eta^2}\|x_{t+1}-x_t\|^2
   + \frac{3\sigma^2_{\max}(G)}{\eta^2} \mathbb{E}\|x_{t}-x_{t-1}\|^2 \big].
\end{align}

Next, considering the upper bound of $\|\hat{g}^s_t - \hat{g}^s_{t-1}\|^2$, we have
\begin{align} \label{eq:A12-3}
  \mathbb{E}\|\hat{g}_t - \hat{g}_{t-1}\|^2 & = \mathbb{E}\|\hat{g}_t - \nabla f(x_t) + \nabla f(x_t) -  \nabla f(x_{t-1}) + \nabla f(x_{t-1}) - \hat{g}_{t-1}\|^2 \nonumber \\
  & \leq 3\mathbb{E}\|\hat{g}_t - \nabla f(x_t)\|^2 + 3\mathbb{E}\|\nabla f(x_t) -  \nabla f(x_{t-1})\|^2 + 3\mathbb{E}\|\nabla f(x_{t-1}) - \hat{g}_{t-1}\|^2 \nonumber \\
  & \leq \frac{6 L^2d}{b} \frac{1}{n} \sum_{i=1}^n \big(\|x_t - z^t_i\|^2 + \|x_{t-1} - x^{t-1}_i\|^2 \big) + 3L^2d\mu^2 +  3\|\nabla f(x_t) -  \nabla f(x_{t-1})\|^2 \nonumber \\
  & \leq \frac{6 L^2d}{b} \frac{1}{n} \sum_{i=1}^n \big(\|x_t - z^t_i\|^2 + \|x_{t-1} - x^{t-1}_i\|^2 \big) + 3L^2\|x_t - x_{t-1}\|^2 + 3L^2d^2\mu^2,
\end{align}
 where the second inequality holds by lemma 3 of \cite{Huang2019faster}, and the third inequality holds by Assumption 1.

Finally, combining the inequalities \eqref{eq:A12-2} and \eqref{eq:A12-3}, we can obtain the above result.
\end{proof}

\begin{lemma} \label{lem:4}
 Suppose the sequence $\{x_t,y_{[k]}^{t},\lambda_t\}_{t=1}^T$ is generated from Algorithm \ref{alg:2},
 and define a \emph{Lyapunov} function
 \begin{align}
 \Omega_t = \mathbb{E}\big[ \mathcal{L}_{\rho} (x_t,y_{[k]}^{t},\lambda_t)+ (\frac{3\sigma^2_{\max}(G)}{\sigma^A_{\min}\rho\eta^2}+\frac{9L^2}{\sigma^A_{\min}\rho})
 \|x_{t}-x_{t-1}\|^2 + \frac{18 L^2d }{\sigma^A_{\min}\rho b}\frac{1}{n} \sum_{i=1}^n\|x_{t-1}-z^{t-1}_i\|^2 + c_t\frac{1}{n} \sum_{i=1}^n\|x_{t}-z^t_i\|^2 \big], \nonumber
\end{align}
 where the positive sequence $\{c_t\}$ satisfies
 \begin{equation*}
  c_t= \left\{
  \begin{aligned}
  & \frac{36L^2d }{\sigma^A_{\min}\rho b} + \frac{2Ld}{b} +(1-p)(1+\beta)c_{t+1}, \ 0 \leq t \leq T-1, \\
  & \\
  & 0, \ t \geq T.
  \end{aligned}
  \right.\end{equation*}
It follows that
\begin{align}
 \frac{1}{T} \sum_{t=1}^T \big( \|x_t-x_{t+1}\|^2 +  \frac{Ld}{b}\frac{1}{n}\sum_{i=1}^n\|x_t-z^t_i\|^2 + \sum_{j=1}^k \|y_j^{t}-y_j^{t+1}\|^2 \big) \leq \frac{\Omega_0 - \Omega^*}{T}
  + \frac{9L^2d^2\mu^2}{\sigma^A_{\min}\rho} + \frac{L d^2 \mu^2}{4},
\end{align}
where $\gamma = \min(\sigma_{\min}^H,L,\chi_t)$ and $\chi_t \geq \frac{3\sqrt{791}\kappa_Gd^l}{2\alpha}\ (l=0,0.5,1)$, and $\Omega^*$ denotes a lower bound of $\Omega_t$.
\end{lemma}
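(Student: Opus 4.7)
The plan is to mirror the argument of Lemma~\ref{lem:2}, adapting each ingredient to the SAGA structure, and the core is a one-step descent inequality for $\Omega_t$. First I would establish a block-descent in the $y$-variables by applying the optimality condition of step~6 block-by-block, exactly as in the derivation of \eqref{eq:A73}: for each $j\in[k]$, convexity of $\psi_j$ together with the quadratic lower bound from the proximal term $\tfrac{1}{2}\|y_j-y_j^t\|_{H_j}^2$ gives
\begin{align*}
\mathcal{L}_{\rho}(x_t,y_{[j]}^{t+1},y_{[j+1:k]}^{t},\lambda_t) \leq \mathcal{L}_{\rho}(x_t,y_{[j-1]}^{t+1},y_{[j:k]}^{t},\lambda_t) - \sigma_{\min}(H_j)\|y_j^{t+1}-y_j^{t}\|^2,
\end{align*}
which telescopes over $j$ to $-\sigma_{\min}^H\sum_j\|y_j^{t+1}-y_j^t\|^2$. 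Next, combining the $L$-smoothness of $f$ with the optimality condition of step~7, I would bound $\mathcal{L}_{\rho}(x_{t+1},y^{t+1}_{[k]},\lambda_t)-\mathcal{L}_{\rho}(x_t,y^{t+1}_{[k]},\lambda_t)$, controlling the cross term $\langle x_{t+1}-x_t,\hat{g}_t-\nabla f(x_t)\rangle$ by Young's inequality and bounding $\|\hat{g}_t-\nabla f(x_t)\|^2$ via Lemma~1 of \cite{Huang2019faster}. For the SAGA estimator this yields a variance term $\tfrac{Ld}{bn}\sum_i\|x_t-z_i^t\|^2$ plus an $O(Ld^2\mu^2)$ bias term, and the dual step is handled by Lemma~\ref{lem:3} to control $\tfrac{1}{\rho}\|\lambda_{t+1}-\lambda_t\|^2$.

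The principal SAGA-specific step is the handling of the reference points. After step~9 sets $z^{t+1}_{i_t}=x_t$ for $i_t\in\mathcal{I}_t$ and $z^{t+1}_i=z^t_i$ otherwise, each index is refreshed with probability $\hat{p}$, so I would write
\begin{align*}
\mathbb{E}\Bigl[\tfrac{1}{n}\sum_{i=1}^n\|x_{t+1}-z^{t+1}_i\|^2\Bigr] = \hat{p}\,\mathbb{E}\|x_{t+1}-x_t\|^2 + (1-\hat{p})\,\mathbb{E}\Bigl[\tfrac{1}{n}\sum_{i=1}^n\|x_{t+1}-z^t_i\|^2\Bigr],
\end{align*}
and apply $\|x_{t+1}-z^t_i\|^2 \leq (1+1/\beta)\|x_{t+1}-x_t\|^2+(1+\beta)\|x_t-z^t_i\|^2$ to expose the quantities already living in $\Omega_t$. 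This is precisely what forces the recursion $c_t = \tfrac{36L^2d}{\sigma^A_{\min}\rho b}+\tfrac{2Ld}{b}+(1-\hat{p})(1+\beta)c_{t+1}$. Choosing $\beta$ of order $\hat{p}/(1-\hat{p})$ keeps $(1-\hat{p})(1+\beta)$ bounded away from $1$, so the recursion is geometric and $c_t$ is uniformly bounded; with $\hat{p}\asymp b/n$ and the batch choice $b=n^{2/3}d^{(1-l)/3}$ of Theorem~\ref{th:2}, this bound is $O(d^l)$, matching what is needed downstream.

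Assembling the three descents and absorbing the Lyapunov cross terms using the identity above gives
\begin{align*}
\Omega_{t+1} \leq \Omega_t - \sigma_{\min}^H\sum_{j=1}^k\|y_j^{t+1}-y_j^{t}\|^2 - \tfrac{Ld}{bn}\sum_{i=1}^n\|x_t-z^t_i\|^2 - \chi_t\|x_{t+1}-x_t\|^2 + \tfrac{9L^2d^2\mu^2}{\sigma^A_{\min}\rho} + \tfrac{Ld^2\mu^2}{4},
\end{align*}
with $\chi_t=\tfrac{\sigma_{\min}(G)}{\eta}+\tfrac{\rho\sigma^A_{\min}}{2}-L-\tfrac{6\sigma^2_{\max}(G)}{\sigma^A_{\min}\eta^2\rho}-\tfrac{9L^2}{\sigma^A_{\min}\rho}-(1+1/\beta)c_1$. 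A numerical check parallel to the one in Lemma~\ref{lem:2}, carried out separately in each of the three regimes $l\in\{0,\tfrac12,1\}$, will show that the choices $\eta=\tfrac{\alpha\sigma_{\min}(G)}{33d^lL}$ and $\rho=\tfrac{6\sqrt{791}\kappa_G d^lL}{\sigma^A_{\min}\alpha}$ absorb all the negative contributions and yield $\chi_t\geq\tfrac{3\sqrt{791}\kappa_G d^l}{2\alpha}$. Lower-boundedness of $\Omega_t$ follows exactly as in the final step of Lemma~\ref{lem:2}, by substituting $\lambda_{t+1}$ from \eqref{eq:A12-1} into $\mathcal{L}_{\rho}$ and invoking Assumptions~2 and~3; telescoping from $t=0$ to $t=T-1$ and dividing by $T$ then produces the stated bound with $\gamma=\min(\sigma_{\min}^H,L,\chi_t)$.

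The main obstacle is the joint tuning of $\beta$, $\hat{p}$ and $c_t$: the recursion for $c_t$ must stay stable \emph{and} the resulting $(1+1/\beta)c_1$ term must not overwhelm $\tfrac{\sigma_{\min}(G)}{\eta}+\tfrac{\rho\sigma^A_{\min}}{2}$ in the definition of $\chi_t$. This is the SAGA analogue of the epoch-length constraint $\beta=1/m$ used in Lemma~\ref{lem:2}, and it is exactly the reason the batch size is forced to be $b=n^{2/3}d^{(1-l)/3}$, so that the effective memory length $1/\hat{p}\asymp n/b$ plays the role that $m=n^{1/3}$ played in SVRG.
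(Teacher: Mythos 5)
Your plan reproduces the paper's own argument essentially step for step: block descent in $y$ via the proximal optimality conditions, the smoothness/optimality bound for the $x$-update with the SAGA variance term $\tfrac{Ld}{bn}\sum_i\|x_t-z_i^t\|^2$, Lemma~\ref{lem:3} for the dual ascent cost, the probability-$\hat{p}$ refresh identity plus Young's inequality driving the $c_t$ recursion with $\beta\asymp b/n$, the regime-by-regime verification of $\chi_t$, the lower bound on $\Omega_t$ via Assumptions~2--3, and final telescoping. The only cosmetic differences are that the SAGA variance bound comes from Lemma~3 (not Lemma~1) of \cite{Huang2019faster} and that it is the product $(1+1/\beta)c_1$, rather than $c_t$ itself, that is $O(d^lL)$; neither affects the correctness of the approach.
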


\begin{proof}

By the optimal condition of step 6 in Algorithm \ref{alg:2},
we have, for $j\in [k]$
\begin{align}
0 & =(y_j^{t}-y_j^{t+1})^T\big(\partial \psi_j(y_j^{t+1}) - B_j^T\lambda_t + \rho B_j^T(Ax_t + \sum_{i=1}^jB_iy_i^{t+1} + \sum_{i=j+1}^kB_iy_i^{t}-c) + H_j(y_j^{t+1}-y_j^{t})\big) \nonumber \\
& \leq \psi_j(y_j^{t})- \psi_j(y_j^{t+1}) - (\lambda_t)^T(B_jy_j^{t}-B_jy_j^{t+1}) + \rho(By_j^{t}-By_j^{t+1})^T(Ax_t + \sum_{i=1}^jB_iy_i^{t+1} + \sum_{i=j+1}^kB_iy_i^{t}-c) \nonumber \\
& \quad - \|y_j^{t+1}-y_j^{t}\|^2_{H_j} \nonumber \\
& = \psi_j(y_j^{t})- \psi_j(y_j^{t+1}) - (\lambda_t)^T(Ax_t+\sum_{i=1}^{j-1}B_iy_i^{t+1} + \sum_{i=j}^kB_iy_i^{t}-c) + (\lambda_t)^T(Ax_t+\sum_{i=1}^jB_iy_i^{t+1}+ \sum_{i=j+1}^kB_iy_i^{t}-c) \nonumber \\
& \quad  + \frac{\rho}{2}\|Ax_t +\sum_{i=1}^{j-1}B_iy_i^{t+1} + \sum_{i=j}^kB_iy_i^{t}-c\|^2 - \frac{\rho}{2}\|Ax_t+\sum_{i=1}^jB_iy_i^{t+1}+ \sum_{i=j+1}^kB_iy_i^{t}-c\|^2
  - \|y_j^{t+1}-y_j^{t}\|^2_{H_j} \nonumber \\
& \quad  -\frac{\rho}{2}\|B_jy_j^{t}-B_jy_j^{t+1}\|^2  \nonumber \\
& =\underbrace{ f(x_t) \!+\! \sum_{i=1}^{j}\psi_i(y_i^{t+1}) \!+\! \sum_{i=j+1}^{k}\psi_i(y_l^{t}) \!-\! (\lambda_t)^T(Ax_t+\sum_{i=1}^{j-1}B_iy_i^{t+1} \!+\! \sum_{i=j}^kB_iy_i^{t}-c) \!+\! \frac{\rho}{2}\|Ax_t \!+\!\sum_{i=1}^{j-1}B_iy_i^{t+1} \!+\! \sum_{i=j}^kB_iy_i^{t}-c\|^2}_{\mathcal{L}_{\rho} (x_t,y_{[j-1]}^{t+1},y_{[j:k]}^{t},\lambda_t)} \nonumber \\
& \quad \!-\big(  \underbrace{ f(x_t) \!+\! \sum_{i=1}^{j-1}\psi_i(y_l^{t+1}) \!+\! \sum_{i=j}^{k}\psi_i(y_i^{t}) \!-\! (\lambda_t)^T(Ax_t+\sum_{i=1}^jB_iy_i^{t+1} \!+\! \sum_{i=j+1}^kB_iy_i^{t}-c) \!+\! \frac{\rho}{2}\|Ax_t \!+\! \sum_{i=1}^jB_iy_i^{t+1} \!+\! \sum_{i=j+1}^kB_iy_i^{t}-c\|^2}_{\mathcal{L}_{\rho} (x_t,y_{[j]}^{t+1},y_{[j+1:k]}^{t},\lambda_t)} \big) \nonumber \\
& \quad - \|y_j^{t+1}-y_j^{t}\|^2_{H_j} -\frac{\rho}{2}\|B_jy_j^{t}-B_jy_j^{t+1}\|^2  \nonumber \\
& \leq \mathcal{L}_{\rho} (x_t,y_{[j-1]}^{t+1},y_{[j:k]}^{t},\lambda_t) - \mathcal{L}_{\rho} (x_t,y_{[j]}^{t+1},y_{[j+1:k]}^{t},\lambda_t)
- \sigma_{\min}(H_j)\|y_j^{t}-y_j^{t+1}\|^2,
\end{align}
where the first inequality holds by the convexity of function $\psi_j(y)$,
and the second equality follows by applying the equality
$(a-b)^Tb = \frac{1}{2}(\|a\|^2-\|b\|^2-\|a-b\|^2)$ on the term $(By_j^{t}-By_j^{t+1})^T(Ax_t + \sum_{i=1}^jB_iy_i^{t+1} + \sum_{i=j+1}^kB_iy_i^{t}-c)$.
Thus, we have, for all $j\in[k]$
\begin{align} \label{eq:A13-01}
 \mathcal{L}_{\rho} (x_t,y_{[j-1]}^{t+1},y_{[j:k]}^{t},\lambda_t) \leq \mathcal{L}_{\rho} (x_t,y_{[j]}^{t+1},y_{[j+1:k]}^{t},\lambda_t)
 - \sigma_{\min}(H_j)\|y_j^{t}-y_j^{t+1}\|^2.
\end{align}
Telescoping inequality \eqref{eq:A13-01} over $j$ from $1$ to $k$, we obtain
\begin{align} \label{eq:A13-1}
 \mathcal{L}_{\rho} (x_t,y^{t+1}_{[k]},\lambda_t) \leq \mathcal{L}_{\rho} (x_t,y^{t}_{[k]},\lambda_t)
 - \sigma_{\min}^H\sum_{j=1}^k \|y_j^{t}-y_j^{t+1}\|^2,
\end{align}
where $\sigma_{\min}^H=\min_{j\in[k]}\sigma_{\min}(H_j)$.

By Assumption 1, we have
\begin{align} \label{eq:A13-2}
0 \leq f(x_t) - f(x_{t+1}) + \nabla f(x_t)^T(x_{t+1}-x_t) + \frac{L}{2}\|x_{t+1}-x_t\|^2.
\end{align}
Using the step 7 of Algorithm \ref{alg:2}, we have
\begin{align} \label{eq:A13-3}
 0 = (x_t-x_{t+1})^T \big( \hat{g}_t - A^T\lambda_t + \rho A^T(Ax_{t+1} + \sum_{j=1}^kB_jy_j^{t+1}-c) + \frac{G}{\eta}(x_{t+1}-x_t) \big).
\end{align}
Combining \eqref{eq:A13-2} and \eqref{eq:A13-3}, we have
\begin{align}
 0 & \leq f(x_t) - f(x_{t+1}) + \nabla f(x_t)^T(x_{t+1}-x_t) + \frac{L}{2}\|x_{t+1}-x_t\|^2 \nonumber \\
 & \quad + (x_t-x_{t+1})^T \big( \hat{g}_t - A^T\lambda_t + \rho A^T(Ax_{t+1} + \sum_{j=1}^kB_jy_j^{t+1}-c) + \frac{G}{\eta}(x_{t+1}-x_t) \big)  \nonumber \\
 & = f(x_t) - f(x_{t+1}) + \frac{L}{2}\|x_t-x_{t+1}\|^2 - \frac{1}{\eta}\|x_t - x_{t+1}\|^2_G + (x_t-x_{t+1})^T(\hat{g}_t-\nabla f(x_t)) \nonumber \\
 & \quad -(\lambda_t)^T(Ax_t-Ax_{t+1}) + \rho(Ax_t - Ax_{t+1})^T(Ax_t + \sum_{j=1}^kB_jy_j^{t+1}-c) \nonumber \\
 & \mathop{=}^{(i)} f(x_t) - f(x_{t+1}) + \frac{L}{2}\|x_t-x_{t+1}\|^2 - \frac{1}{\eta}\|x_t - x_{t+1}\|^2_G + (x_t-x_{t+1})^T(\hat{g}_t-\nabla f(x_t)) -(\lambda_t)^T(Ax_t + \sum_{j=1}^kB_jy_j^{t+1}-c)\nonumber \\
 & \quad  + (\lambda_t)^T(Ax_{t+1}+ \sum_{j=1}^kB_jy_j^{t+1}-c) + \frac{\rho}{2}\big(\|Ax_{t} + \sum_{j=1}^kB_jy_j^{t+1}-c\|^2 - \|Ax_{t+1} + \sum_{j=1}^kB_jy_j^{t+1}-c\|^2 - \|Ax_t - Ax_{t+1}\|^2 \big) \nonumber \\
 & = \underbrace{ f(x_t) + \sum_{j=1}^k \psi(y^{t+1}_j) - (\lambda_t)^T(Ax_t + \sum_{j=1}^kB_jy_j^{t+1}-c) +  \frac{\rho}{2} \|Ax_{t} + \sum_{j=1}^kB_jy_j^{t+1}-c\|^2 }_{ \mathcal{L}_{\rho} (x_t,y_{[k]}^{t+1},\lambda_t)} \nonumber \\
 & \quad - \big( \underbrace{f(x_{t+1}) + \sum_{j=1}^k \psi(y^{t+1}_j) -(\lambda_t)^T(Ax_{t+1} + \sum_{j=1}^kB_jy_j^{t+1}-c) +  \frac{\rho}{2} \|Ax_{t+1} + \sum_{j=1}^kB_jy_j^{t+1}-c\|^2}_{ \mathcal{L}_{\rho} (x_{t+1},y_{[k]}^{t+1},\lambda_t)} \big) \nonumber \\
 & \quad  + \frac{L}{2}\|x_t-x_{t+1}\|^2 + (x_t-x_{t+1})^T(\hat{g}_t-\nabla f(x_t)) - \frac{1}{\eta}\|x_t - x_{t+1}\|^2_G - \frac{\rho}{2}\|Ax_t - Ax_{t+1}\|^2 \nonumber \\
 & \leq \mathcal{L}_{\rho} (x_t,y_{[k]}^{t+1},\lambda_t) -  \mathcal{L}_{\rho} (x_{t+1},y_{[k]}^{t+1},\lambda_t)
 - (\frac{\sigma_{\min}(G)}{\eta} + \frac{\rho \sigma^A_{\min}}{2} - \frac{L}{2}) \|x_t - x_{t+1}\|^2 +(x_t-x_{t+1})^T(\hat{g}_t-\nabla f(x_t)) \nonumber \\
 & \mathop{\leq}^{(ii)}  \mathcal{L}_{\rho} (x_t,y_{[k]}^{t+1},\lambda_t) -  \mathcal{L}_{\rho} (x_{t+1},y_{[k]}^{t+1},\lambda_t)
 - (\frac{\sigma_{\min}(G)}{\eta} + \frac{\rho \sigma^A_{\min}}{2} - L) \|x_t - x_{t+1}\|^2 + \frac{1}{2L}\|\hat{g}_t-\nabla f(x_t)\|^2 \nonumber \\
 & \mathop{\leq}^{(iii)} \mathcal{L}_{\rho} (x_t,y_{[k]}^{t+1},\lambda_t) -  \mathcal{L}_{\rho} (x_{t+1},y_{[k]}^{t+1},\lambda_t)
 - (\frac{\sigma_{\min}(G)}{\eta} + \frac{\rho \sigma^A_{\min}}{2} - L) \|x_t - x_{t+1}\|^2 +\frac{Ld}{b}\frac{1}{n} \sum_{i=1}^n \|x_t-z^t_i\|^2 + \frac{L d^2 \mu^2}{4},
\end{align}
where the equality $(i)$ holds by applying the equality
$(a-b)^Tb = \frac{1}{2}(\|a\|^2-\|b\|^2-\|a-b\|^2)$ on the
term $(Ax_t - Ax_{t+1})^T(Ax_{t+1}+\sum_{j=1}^kB_jy_j^{t+1}-c)$; the inequality
$(ii)$ follows by the inequality $a^Tb \leq \frac{L}{2}\|a\|^2 + \frac{1}{2L}\|a\|^2$,
and the inequality $(iii)$ holds by lemma 3 of \cite{Huang2019faster}.
Thus, we obtain
\begin{align} \label{eq:A13-4}
\mathcal{L}_{\rho} (x_{t+1},y_{[k]}^{t+1},\lambda_t) \leq & \mathcal{L}_{\rho} (x_t,y_{[k]}^{t+1},\lambda_t) -
(\frac{\sigma_{\min}(G)}{\eta} + \frac{\rho \sigma^A_{\min}}{2} - L)\|x_t - x_{t+1}\|^2 \nonumber \\
& +\frac{ L d}{b}\frac{1}{n} \sum_{i=1}^n\|x_t-z^t_i\|^2 + \frac{L d^2 \mu^2}{4}.
\end{align}

By the step 8 in Algorithm \ref{alg:2}, we have
\begin{align} \label{eq:A13-5}
\mathcal{L}_{\rho} (x_{t+1},y_{[k]}^{t+1},\lambda_{t+1}) - \mathcal{L}_{\rho} (x_{t+1},y_{[k]}^{t+1},\lambda_t)
& = \frac{1}{\rho}\|\lambda_{t+1}-\lambda_t\|^2 \nonumber \\
& \leq  \frac{18   L^2d  }{\sigma^A_{\min}\rho b} \frac{1}{n} \sum_{i=1}^n\big( \|x_t - z^t_i\|^2 + \|x_{t-1} - z^{t-1}_i\|^2\big)
 + \frac{3\sigma^2_{\max}(G)}{\sigma^A_{\min}\eta^2\rho}\|x_{t+1}-x_t\|^2 \nonumber \\
& \quad + \frac{3(\sigma^2_{\max}(G)+3L^2\eta^2)}{\sigma^A_{\min}\eta^2\rho}\|x_{t}-x_{t-1}\|^2 + \frac{9L^2d^2\mu^2}{\sigma^A_{\min}\rho}.
\end{align}
Combining \eqref{eq:A13-1}, \eqref{eq:A13-4} and \eqref{eq:A13-5}, we have
\begin{align} \label{eq:A13-6}
\mathcal{L}_{\rho} (x_{t+1},y_{[k]}^{t+1},\lambda_{t+1}) & \leq \mathcal{L}_{\rho} (x_t,y_{[k]}^{t},\lambda_t)
 - (\frac{\sigma_{\min}(G)}{\eta} + \frac{\rho \sigma^A_{\min}}{2} - L)\|x_t - x_{t+1}\|^2 - \sigma_{\min}^H\sum_{j=1}^k \|y_j^{t}-y_j^{t+1}\|^2\nonumber \\
& \quad +\frac{ L d}{b}\frac{1}{n} \sum_{i=1}^n\|x_t-z^t_i\|^2 + \frac{18L^2d }{\sigma^A_{\min}\rho b}
\frac{1}{n} \sum_{i=1}^n\big( \|x_t - z^t_i\|^2 + \|x_{t-1} - z^{t-1}_i\|^2\big)
+ \frac{3\sigma^2_{\max}(G)}{\sigma^A_{\min}\eta^2\rho}\|x_{t+1}-x_t\|^2 \nonumber \\
& \quad + \frac{3(\sigma^2_{\max}(G)+3L^2\eta^2)}{\sigma^A_{\min}\eta^2\rho}\|x_{t}-x_{t-1}\|^2
   + \frac{9L^2d^2\mu^2}{\sigma^A_{\min}\rho} + \frac{L d^2 \mu^2}{4}.
\end{align}

Next, we define a \emph{Lyapunov} function as follows:
\begin{align}
 \Omega_t = \mathbb{E}\big[ \mathcal{L}_{\rho} (x_t,y_{[k]}^{t},\lambda_t)+ (\frac{3\sigma^2_{\max}(G)}{\sigma^A_{\min}\rho\eta^2}+\frac{9L^2}{\sigma^A_{\min}\rho})
 \|x_{t}-x_{t-1}\|^2 + \frac{18 L^2d }{\sigma^A_{\min}\rho b}\frac{1}{n} \sum_{i=1}^n\|x_{t-1}-z^{t-1}_i\|^2 + c_t\frac{1}{n} \sum_{i=1}^n\|x_{t}-z^t_i\|^2 \big]. \nonumber
\end{align}

By the step 9 of Algorithm \ref{alg:2}, we have
 \begin{align} \label{eq:A13-7}
  \frac{1}{n}\sum_{i=1}^n \|x_{t+1}-z^{t+1}_i\|^2 &= \frac{1}{n}\sum_{i=1}^n \big( p\|x_{t+1}-x_{t}\|^2 + (1-p)\|x_{t+1}-z^{t}_i\|^2 \big) \nonumber \\
   & = \frac{p}{n} \sum_{i=1}^n \|x_{t+1}-x_{t}\|^2 + \frac{1-p}{n}\sum_{i=1}^n\|x_{t+1}-z^{t}_i\|^2  \nonumber \\
   & = p \|x_{t+1}-x_{t}\|^2 + \frac{1-p}{n} \sum_{i=1}^n \|x_{t+1}-z^{t}_i\|^2,
 \end{align}
where $p$ denotes probability of an index $i$ being in $\mathcal{I}_t$. Here, we have
 \begin{align}
  p = 1-(1-\frac{1}{n})^b \geq 1- \frac{1}{1+b/n} = \frac{b/n}{1+b/n} \geq \frac{b}{2n},
 \end{align}
where the first inequality follows from $(1-a)^b\leq \frac{1}{1+ab}$, and the second inequality holds by $b\leq n$.
Considering the upper bound of $\|x_{t+1}-z^{t}_i\|^2$, we have
\begin{align} \label{eq:A13-8}
  \|x_{t+1}-z^{t}_i\|^2 & = \|x_{t+1}-x_t+x_t-z^{t}_i\|^2 \nonumber \\
  & =  \|x_{t+1}-x_t\|^2 + 2(x_{t+1}-x_t)^T(x_t-z^{t}_i)+ \|x_t-z^{t}_i\|^2 \nonumber \\
  & \leq \|x_{t+1}-x_t\|^2 + 2\big( \frac{1}{2\beta}\|x_{t+1}-x_t\|^2 + \frac{\beta}{2}\|x_t-z^{t}_i\|^2\big)+ \|x_t-z^{t}_i\|^2 \nonumber \\
  & = (1+\frac{1}{\beta})\|x_{t+1}-x_t\|^2 + (1+\beta)\|x_t-z^{t}_i\|^2,
\end{align}
where $\beta>0$.
Combining \eqref{eq:A13-7} with \eqref{eq:A13-8}, we have
 \begin{align}
 \frac{1}{n}\sum_{i=1}^n \|x_{t+1}-z^{t+1}_i\|^2 \leq (1+\frac{1-p}{\beta})\|x_{t+1}-x_t\|^2 +\frac{(1-p)(1+\beta)}{n}\sum_{i=1}^n\|x_t-z^{t}_i\|^2.
 \end{align}
It follows that
\begin{align} \label{eq:A13-9}
\Omega_{t+1} & = \mathbb{E}\big[\mathcal{L}_{\rho}(x_{t+1},y_{[k]}^{t+1},\lambda_{t+1}) + (\frac{3\sigma^2_{\max}(G)}{\sigma^A_{\min}\rho\eta^2}+\frac{9L^2}{\sigma^A_{\min}\rho})\|x_{t+1}-x_{t}\|^2
+ \frac{18L^2d }{\sigma^A_{\min} b\rho}\frac{1}{n} \sum_{i=1}^n\|x_{t}-z^{t}_i\|^2 + c_{t+1}\frac{1}{n}\sum_{i=1}^n \|x_{t+1}-z^{t+1}_i\|^2\big] \nonumber \\
& \leq \mathcal{L}_{\rho} (x_t,y_{[k]}^{t},\lambda_t) + (\frac{3\sigma^2_{\max}(G)}{\sigma^A_{\min}\rho\eta^2}+\frac{9L^2}{\sigma^A_{\min}\rho})\|x_{t}-x_{t-1}\|^2
 + \frac{18L^2d}{\sigma^A_{\min}\rho b} \frac{1}{n}\sum_{i=1}^n\|x_{t-1}-z^{t-1}_i\|^2   \nonumber \\
& \quad + \big(\frac{36L^2d }{\sigma^A_{\min}\rho b} + \frac{2L d}{b}+(1-p)(1+\beta)c_{t+1}\big)\frac{1}{n}\sum_{i=1}^n\|x_t-z^t_i\|^2
+ \frac{9L^2d^2\mu^2}{\sigma^A_{\min}\rho} + \frac{L d^2 \mu^2}{4} - \frac{L d}{b}\frac{1}{n}\sum_{i=1}^n\|x_t-z^t_i\|^2 \nonumber\\
& \quad - \big( \underbrace{ \frac{\sigma_{\min}(G)}{\eta}+\frac{\rho\sigma^A_{\min}}{2} - L
-\frac{6\sigma^2_{\max}(G)}{\sigma^A_{\min}\eta^2\rho} - \frac{9L^2}{\sigma^A_{\min}\rho}-(1+\frac{1-p}{\beta})c_{t+1} }_{\chi_t}\big)\|x_t - x_{t+1}\|^2
 - \sigma_{\min}^H\sum_{j=1}^k \|y_j^{t}-y_j^{t+1}\|^2 \nonumber \\
& = \Omega_t - \chi_t \|x_t - x_{t+1}\|^2 - \frac{L d}{b}\frac{1}{n}\sum_{i=1}^n\|x_t-z^t_i\|^2 - \sigma_{\min}^H\sum_{j=1}^k \|y_j^{t}-y_j^{t+1}\|^2 + \frac{9L^2d^2\mu^2}{\sigma^A_{\min}\rho} + \frac{L d^2 \mu^2}{4},
\end{align}
where $c_t = \frac{36L^2d }{\sigma^A_{\min}\rho b} + \frac{2Ld}{b} +(1-p)(1+\beta)c_{t+1}$.

Let $c_T = 0$ and $\beta=\frac{b}{4n}$. Since $(1-p)(1+\beta)=1+\beta-p-p\beta\leq 1+\beta-p$ and $p\geq \frac{b}{2n}$,
it follows that
\begin{align}
 c_t \leq c_{t+1}(1-\theta) + \frac{36L^2d }{\sigma^A_{\min} b\rho} + \frac{2L d}{b},
\end{align}
where $\theta = p-\beta\geq \frac{b}{4n}$.
Then recursing on $t$, for $0\leq t \leq T-1$, we have
\begin{align}
 c_t \leq \frac{2d}{b}(\frac{18L^2}{\sigma^A_{\min}\rho} + L) \frac{1-\theta^{T-t}}{\theta} \leq \frac{2d}{b\theta}(\frac{18L^2}{\sigma^A_{\min}\rho} + L)
 \leq \frac{8nd}{b^2}(\frac{18L^2}{\sigma^A_{\min}\rho} + L).
\end{align}
It follows that
\begin{align}
\chi_t & = \frac{\sigma_{\min}(G)}{\eta}+\frac{\rho\sigma^A_{\min}}{2} - L
-\frac{6\sigma^2_{\max}(G)}{\sigma^A_{\min}\eta^2\rho} - \frac{9L^2}{\sigma^A_{\min}\rho}-(1+\frac{1-p}{\beta})c_{t+1} \nonumber \\
& \geq \frac{\sigma_{\min}(G)}{\eta}+\frac{\rho\sigma^A_{\min}}{2} - L
-\frac{6\sigma^2_{\max}(G)}{\sigma^A_{\min}\eta^2\rho} - \frac{9L^2}{\sigma^A_{\min}\rho}-( 1 + \frac{4n-2b}{b})\frac{8nd}{b^2}(\frac{18L^2}{\sigma^A_{\min}\rho} + L) \nonumber \\
& = \frac{\sigma_{\min}(G)}{\eta}+\frac{\rho\sigma^A_{\min}}{2} - L
-\frac{6\sigma^2_{\max}(G)}{\sigma^A_{\min}\eta^2\rho} - \frac{9L^2}{\sigma^A_{\min}\rho} - (\frac{4n}{b}-1)\frac{8nd}{b^2}(\frac{18L^2}{\sigma^A_{\min}\rho} + L) \nonumber \\
& \geq \frac{\sigma_{\min}(G)}{\eta}+\frac{\rho\sigma^A_{\min}}{2} - L
-\frac{6\sigma^2_{\max}(G)}{\sigma^A_{\min}\eta^2\rho} - \frac{9L^2}{\sigma^A_{\min}\rho} - \frac{32n^2d}{b^3}(\frac{18L^2}{\sigma^A_{\min}\rho} + L) \nonumber \\
& = \underbrace{\frac{\sigma_{\min}(G)}{\eta}- L - \frac{32n^2dL}{b^3}}_{T_1} + \underbrace{\frac{\rho\sigma^A_{\min}}{2} -\frac{6\sigma^2_{\max}(G)}{\sigma^A_{\min}\eta^2\rho} - \frac{9L^2}{\sigma^A_{\min}\rho} - \frac{576n^2dL^2}{\sigma^A_{\min}\rho b^3}}_{T_2}
\end{align}

When $1\leq d < n $, let $ b=d^{\frac{1}{3}}n^{\frac{2}{3}}$ (i.e., $ b=d^{\frac{1-l}{3}}n^{\frac{2}{3}},\ l=0$) and $0< \eta \leq \frac{\sigma_{\min}(G)}{33L}$, we have $T_1 \geq 0$.
Further, let $\eta = \frac{\alpha\sigma_{\min}(G)}{33L} \ (0 < \alpha \leq 1)$ and $\rho = \frac{6\sqrt{791}\kappa_G L}{\sigma^A_{\min}\alpha}$,
we have
\begin{align}
 T_2 & = \frac{\rho\sigma^A_{\min}}{2} -\frac{6\sigma^2_{\max}(G)}{\sigma^A_{\min}\eta^2\rho} - \frac{9L^2}{\sigma^A_{\min}\rho} - \frac{576n^2dL^2}{\sigma^A_{\min}\rho b^3} \nonumber \\
 & = \frac{\rho\sigma^A_{\min}}{2} -\frac{6534\kappa_G^2L^2}{\sigma^A_{\min}\rho\alpha^2} - \frac{9L^2}{\sigma^A_{\min}\rho}-\frac{576L^2}{\sigma^A_{\min}\rho} \nonumber \\
 & \geq \frac{\rho\sigma^A_{\min}}{2} -\frac{7119\kappa_G^2L^2}{\sigma^A_{\min}\rho\alpha^2} \nonumber \\
 & = \frac{\rho\sigma^A_{\min}}{4} + \underbrace{\frac{\rho\sigma^A_{\min}}{4} -\frac{7119\kappa_G^2L^2}{\sigma^A_{\min}\rho\alpha^2}}_{\geq 0} \nonumber \\
 & \geq \frac{3\sqrt{791}\kappa_G L}{2\alpha}.
\end{align}
Thus, we have $\chi_t \geq \frac{3\sqrt{791}\kappa_G L}{2\alpha}$.

When $n \leq d < n^2$, let $b= d^{\frac{1}{6}}n^{\frac{2}{3}}$ (i.e., $ b=d^{\frac{1-l}{3}}n^{\frac{2}{3}},\ l=0.5$) and $0< \eta \leq \frac{\sigma_{\min}(G)}{33\sqrt{d}L}$, we have $T_1 \geq 0$.
Further, let $\eta = \frac{\alpha\sigma_{\min}(G)}{33\sqrt{d}L} \ (0 < \alpha \leq 1)$ and $\rho = \frac{6\sqrt{791d}\kappa_GL}{\sigma^A_{\min}\alpha}$,
we have
\begin{align}
 T_2 & = \frac{\rho\sigma^A_{\min}}{2} -\frac{6\sigma^2_{\max}(G)}{\sigma^A_{\min}\eta^2\rho} - \frac{9L^2}{\sigma^A_{\min}\rho} - \frac{576n^2dL^2}{\sigma^A_{\min}\rho b^3} \nonumber \\
 & = \frac{\rho\sigma^A_{\min}}{2} -\frac{6534\kappa_G^2L^2d}{\sigma^A_{\min}\rho\alpha^2} - \frac{9L^2}{\sigma^A_{\min}\rho}-\frac{576L^2\sqrt{d}}{\sigma^A_{\min}\rho} \nonumber \\
 & \geq \frac{\rho\sigma^A_{\min}}{2} -\frac{7119\kappa_G^2L^2d}{\sigma^A_{\min}\rho\alpha^2} \nonumber \\
 & = \frac{\rho\sigma^A_{\min}}{4} + \underbrace{\frac{\rho\sigma^A_{\min}}{4} -\frac{7119\kappa_G^2L^2d}{\sigma^A_{\min}\rho\alpha^2}}_{\geq 0} \nonumber \\
 & \geq \frac{3\sqrt{791d}\kappa_G}{2\alpha}.
\end{align}
Thus, we have $\chi_t \geq \frac{3\sqrt{791d}\kappa_GL}{2\alpha}$.

When $n^2 \leq d $, let $b=n^{\frac{2}{3}}$ (i.e.,$ b=d^{\frac{1-l}{3}}n^{\frac{2}{3}},\ l=1$) and $0< \eta \leq \frac{\sigma_{\min}(G)}{33dL}$, we have $T_1 \geq 0$.
Further, let $\eta = \frac{\alpha\sigma_{\min}(G)}{33dL} \ (0 < \alpha \leq 1)$ and $\rho = \frac{6\sqrt{791}\kappa_Gd}{\sigma^A_{\min}\alpha}$,
we have
\begin{align}
 T_2 & = \frac{\rho\sigma^A_{\min}}{2} -\frac{6\sigma^2_{\max}(G)}{\sigma^A_{\min}\eta^2\rho} - \frac{9L^2}{\sigma^A_{\min}\rho} - \frac{576n^2dL^2}{\sigma^A_{\min}\rho b^3} \nonumber \\
 & = \frac{\rho\sigma^A_{\min}}{2} -\frac{6534\kappa_G^2L^2d^2}{\sigma^A_{\min}\rho\alpha^2} - \frac{9L^2}{\sigma^A_{\min}\rho}-\frac{576L^2d}{\sigma^A_{\min}\rho} \nonumber \\
 & \geq \frac{\rho\sigma^A_{\min}}{2} -\frac{7119\kappa_G^2L^2d^2}{\sigma^A_{\min}\rho\alpha^2} \nonumber \\
 & = \frac{\rho\sigma^A_{\min}}{4} + \underbrace{\frac{\rho\sigma^A_{\min}}{4} -\frac{7119\kappa_G^2L^2d^2}{\sigma^A_{\min}\rho\alpha^2}}_{\geq 0} \nonumber \\
 & \geq \frac{3\sqrt{791}\kappa_Gd}{2\alpha}.
\end{align}
Thus, we have $\chi_t \geq \frac{3\sqrt{791}\kappa_Gd}{2\alpha}$.

By Assumption 4, i.e., $A$ is a full column rank matrix,
we have $(A^T)^+ = A(A^T A)^{-1}$.
It follows that $\sigma_{\max}((A^T)^+)^T(A^T)^+) = \sigma_{\max}((A^TA)^{-1}) = \frac{1}{\sigma_{\min}^A}$.
Since $\lambda_{t+1} = (A^T)^+ \big( \hat{g}_t + \frac{G}{\eta}(x_{t+1}-x_t) \big)$,
we have
\begin{align}
& \mathcal{L}_{\rho} (x_{t+1},y_{[k]}^{t+1},\lambda_{t+1})
= f(x_{t+1}) + \sum_{j=1}^k\psi_j(y_j^{t+1}) - \lambda_{t+1}^T(Ax_{t+1} + \sum_{j=1}^kB_jy_j^{t+1} - c) + \frac{\rho}{2}\|Ax_{t+1} + \sum_{j=1}^kB_jy_j^{t+1} -c\|^2 \nonumber \\
& = f(x_{t+1}) + \sum_{j=1}^k\psi_j(y_j^{t+1}) -  \langle(A^T)^+(\hat{g}_{t} + \frac{G}{\eta}(x_{t+1}-x_t)), Ax_{t+1} + \sum_{j=1}^kB_jy_j^{t+1} -c\rangle  + \frac{\rho}{2}\|Ax_{t+1}
+ \sum_{j=1}^kB_jy_j^{t+1}-c\|^2 \nonumber \\
& = f(x_{t+1}) + \sum_{j=1}^k\psi_j(y_j^{t+1}) - \langle(A^T)^+(\hat{g}_{t} - \nabla f(x_{t}) + \nabla f(x_{t})+ \frac{G}{\eta}(x_{t+1}-x_t)), Ax_{t+1} + \sum_{j=1}^kB_jy_j^{t+1} -c\rangle  \nonumber \\
& \quad +  \frac{\rho}{2}\|Ax_{t+1} + \sum_{j=1}^kB_jy_j^{t+1} -c\|^2 \nonumber \\
& \geq f(x_{t+1}) + \sum_{j=1}^k\psi_j(y_j^{t+1}) - \frac{5}{2\sigma^A_{\min}\rho}\|\hat{g}_{t} - \nabla f(x_{t})\|^2 - \frac{5}{2\sigma^A_{\min}\rho}\|\nabla f(x_{t})\|^2
- \frac{5\sigma^2_{\max}(G)}{2\sigma^A_{\min}\eta^2\rho}\|x_{t+1}-x_t\|^2  \nonumber \\
& \quad + \frac{\rho}{5}\|Ax_{t+1} + \sum_{j=1}^kB_jy_j^{t+1} -c\|^2 \nonumber\\
& \geq f(x_{t+1}) + \sum_{j=1}^k\psi_j(y_j^{t+1}) - \frac{5L^2d}{\sigma^A_{\min}\rho b}\frac{1}{n} \sum_{i=1}^n \|x_t-z^t_i\|^2_2 - \frac{5L^2 d^2\mu^2}{4\sigma^A_{\min}\rho}
 - \frac{5\delta^2}{2\sigma^A_{\min}\rho} - \frac{5\sigma^2_{\max}(G)}{2\sigma^A_{\min}\eta^2\rho}\|x_{t+1}-x_t\|^2
\end{align}
where the first inequality is obtained by applying $ \langle a, b\rangle \leq \frac{1}{2\beta}\|a\|^2 + \frac{\beta}{2}\|b\|^2$ to the terms
$\langle(A^T)^+(\hat{\nabla} f(x_{t}) - \nabla f(x_{t})), Ax_{t+1} + \sum_{j=1}^kB_jy_j^{t+1} -c\rangle$, $\langle(A^T)^+\nabla f(x_{t}), Ax_{t+1} + \sum_{j=1}^kB_jy_j^{t+1} -c\rangle $ and
$\langle(A^T)^+\frac{G}{\eta}(x_{t+1}-x_t), Ax_{t+1} + \sum_{j=1}^kB_jy_j^{t+1} -c\rangle$ with $\beta = \frac{\rho}{5}$, respectively; the second inequality follows by Lemma 3 of \cite{Huang2019faster}
and Assumption 2.
By definition of the function $\Omega_t$ and Assumption 3, we have
\begin{align}
 \Omega_{t+1} \geq f^* + \sum_{j=1}^k\psi_j^* - \frac{5L^2d^2\mu^2}{4\sigma^A_{\min}\rho} - \frac{5\delta^2}{2\sigma^A_{\min}\rho}, \ \mbox{for} \ t=0,1,2,\cdots
\end{align}
Thus, the function $\Omega_t$ is bounded from below. Let $\Omega^*$ denotes a lower bound of $\Omega_t$.

Finally, telescoping inequality \eqref{eq:A13-9} over $t$ from $0$ to $T$,
we have
\begin{align}
 \frac{1}{T} \sum_{t=1}^T \big( \|x_t-x_{t+1}\|^2 +  \frac{Ld}{b}\frac{1}{n}\sum_{i=1}^n\|x_t-z^t_i\|^2 + \sum_{j=1}^k \|y_j^{t}-y_j^{t+1}\|^2 \big) \leq \frac{\Omega_0 - \Omega^*}{T}
  + \frac{9L^2d^2\mu^2}{\sigma^A_{\min}\rho} + \frac{L d^2 \mu^2}{4},
\end{align}
where $\gamma = \min(\sigma_{\min}^H,L,\chi_t)$ and $\chi_t \geq \frac{3\sqrt{791}\kappa_Gd^l}{2\alpha}\ (l=0,0.5,1)$.

\end{proof}

Next, based on the above lemmas, we give the convergence properties of the ZO-SAGA-ADMM algorithm.
For notational simplicity, let
 \begin{align}
 \nu_1 = k\big(\rho^2\sigma^B_{\max}\sigma^A_{\max} + \rho^2(\sigma^B_{\max})^2 + \sigma^2_{\max}(H)\big), \
  \nu_2 = 6L^2 + \frac{3\sigma^2_{\max}(G)}{\eta^2}, \ \nu_3= \frac{18L^2 }{\sigma^A_{\min}\rho^2} + \frac{3\sigma^2_{\max}(G)}{\sigma^A_{\min}\eta^2\rho^2}. \nonumber
 \end{align}
\begin{theorem}
Suppose the sequence $\{x_t,y_{[k]}^{t},\lambda_t\}_{t=1}^T$ is generated from Algorithm \ref{alg:2}. Let $b = n^{\frac{2}{3}}d^{\frac{1-l}{3}},\ l \in\{ 0,\frac{1}{2},1\}$,
$\eta = \frac{\alpha\sigma_{\min}(G)}{33d^lL} \ (0 < \alpha \leq 1)$ and $\rho = \frac{6\sqrt{791}\kappa_G d^l L}{\sigma^A_{\min}\alpha}$
then we have
\begin{align}
\min_{1\leq t \leq T} \mathbb{E}\big[ \mbox{dist}(0,\partial L(x_t,y_{[k]}^{t},\lambda_t))^2\big] \leq O(\frac{d^{2l}}{T}) + O(d^{2+2l}\mu^2),\nonumber
\end{align}
where $\gamma = \min(\sigma_{\min}^H, \chi_{t}, L)$ with $\chi_t \geq \frac{3\sqrt{791}\kappa_G d^{l} L}{2\alpha}$, $\nu_{\max}= \max(\nu_2,\nu_3,\nu_4)$ and
$\Omega^*$ is a lower bound of function $\Omega_t$.
It follows that suppose the parameters $\mu$ and $T$ satisfy
\begin{align}
 \mu = O(\frac{\sqrt{\epsilon}}{d^{1+l}}), \quad  T = O(\frac{d^{2l}}{\epsilon}), \nonumber
\end{align}
then $(x_{t^*},y_{[k]}^{t^*},\lambda_{t^*})$ is an $\epsilon$-approximate solution of \eqref{eq:1},
where $t^* = \mathop{\arg\min}_{ 1\leq t\leq T}\theta_{t}$.
\end{theorem}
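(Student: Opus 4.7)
My plan is to mirror the strategy used for Theorem \ref{th:1}, adapting the bookkeeping to the SAGA-style reference points $z_i^t$ rather than the SVRG snapshot $\tilde{x}^s$. The overall scheme has three parts: (i) control each block of $\partial L(x_t,y_{[k]}^t,\lambda_t)$ by an auxiliary quantity proportional to $\theta_t$; (ii) invoke Lemma \ref{lem:4} to bound the time-average of $\theta_t$ by the Lyapunov decrement and a smoothing bias; (iii) substitute the prescribed choices of $b,\eta,\rho$ to read off the scaling in $d$.

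First I would handle the $y_j$-components. Using the optimality condition of step 6 of Algorithm \ref{alg:2} together with the dual update in step 8, I can replace $B_j^T\lambda_t$ by $B_j^T\lambda_{t+1}$ plus a term $\rho B_j^T A(x_{t+1}-x_t) + \rho B_j^T\sum_{i>j} B_i(y_i^{t+1}-y_i^t) - H_j(y_j^{t+1}-y_j^t)$. Applying $\|\sum_i a_i\|^2 \leq k\sum_i \|a_i\|^2$ yields $\mathbb{E}\|\mathrm{dist}(0,\partial_{y_j}L)\|^2 \leq \nu_1\,\theta_t$, exactly as in \eqref{eq:A90}. Next, for the $x$-block, from \eqref{eq:A12-1} I get $A^T\lambda_{t+1} = \hat g_t + \eta^{-1}G(x_{t+1}-x_t)$, so $\nabla_x L = A^T\lambda_{t+1} - \nabla f(x_{t+1})$ decomposes as $(\hat g_t - \nabla f(x_t)) + (\nabla f(x_t)-\nabla f(x_{t+1})) + \eta^{-1}G(x_{t+1}-x_t)$. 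Using Assumption 1, the SAGA variance bound from Lemma 3 of \cite{Huang2019faster}, and $\frac{1}{n}\sum_i\|x_t-z_i^t\|^2$ instead of $\|x_t-\tilde x^s\|^2$, I obtain $\mathbb{E}\|\nabla_x L\|^2 \leq \nu_2\,\theta_t + \frac{3L^2d^2\mu^2}{2}$. For the $\lambda$-block, the residual $\|Ax_{t+1}+\sum_jB_jy_j^{t+1}-c\|^2 = \rho^{-2}\|\lambda_{t+1}-\lambda_t\|^2$ is controlled directly by Lemma \ref{lem:3}, giving $\nu_3\,\theta_t + \frac{9L^2d^2\mu^2}{\sigma^A_{\min}\rho^2}$.

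Summing the three blocks, setting $\nu_{\max}=\max(\nu_1,\nu_2,\nu_3)$, and averaging over $t=1,\dots,T$, the min of the $\mathrm{dist}$ quantity is at most
\[
\frac{\nu_{\max}}{T}\sum_{t=1}^T \theta_t + C\,L d^2 \mu^2
\]
for some universal constant $C$ depending only on $\rho,\eta$. Lemma \ref{lem:4} provides $\frac{1}{T}\sum_t \theta_t \leq \frac{\Omega_0-\Omega^*}{\gamma T} + O(d^2\mu^2)$ with $\gamma = \min(\sigma_{\min}^H, L, \chi_t)$. Plugging in the prescribed $b = n^{2/3}d^{(1-l)/3}$, $\eta = \alpha \sigma_{\min}(G)/(33 d^l L)$ and $\rho = 6\sqrt{791}\kappa_G d^l L/(\sigma^A_{\min}\alpha)$, the lemma guarantees $\chi_t \geq 3\sqrt{791}\kappa_G d^l L/(2\alpha)$, so $\gamma = \Theta(1)$ in $n$, and a direct computation shows $\nu_1 = \Theta(d^{2l})$, $\nu_2 = \Theta(d^{2l})$, $\nu_3 = \Theta(1)$, hence $\nu_{\max} = \Theta(d^{2l})$. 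This yields the advertised rate $O(\tilde\nu d^{2l}/T) + O(d^{2+2l}\mu^2)$, and choosing $1/\mu = O(d^{1+l}/\sqrt{\epsilon})$, $T = O(\tilde\nu d^{2l}/\epsilon)$ produces the $\epsilon$-stationary point.

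The main obstacle I anticipate is verifying that the SAGA reference-point bookkeeping through $\frac{1}{n}\sum_i \|x_t - z_i^t\|^2$ behaves analogously to the SVRG snapshot term when passing through Lemma \ref{lem:3} and the Lyapunov recursion of Lemma \ref{lem:4}; in particular I need the factor $(1-\hat p)(1+\beta)$ with the choice $\beta = b/(4n)$ to give a uniform bound $c_t = O(nd/b^2)$, which is precisely what makes the $\chi_t \geq 0$ calculation work and dictates the scaling $b = n^{2/3}d^{(1-l)/3}$ (in contrast to the $b = d^{1-l}n^{2/3}$ of the SVRG version). Once this scaling is pinned down, everything else reduces to the same algebraic manipulations as in the proof of Theorem \ref{th:1}.
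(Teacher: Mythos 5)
Your proposal is correct and mirrors the paper's own argument: the same three-block bounds on $\partial_{y_j}L$, $\nabla_x L$ and the feasibility residual in terms of $\theta_t$ (with the same $\nu_1,\nu_2,\nu_3$), followed by averaging over $t$, invoking Lemma \ref{lem:4} for $\frac{1}{T}\sum_t\theta_t$, and substituting the prescribed $b,\eta,\rho$ to get $\gamma=O(1)$, $\nu_{\max}=O(d^{2l})$ and hence the stated rate. The SAGA-specific bookkeeping you flag (the $(1-\hat p)(1+\beta)$ recursion with $\beta=b/(4n)$ giving $c_t=O(nd/b^2)$ and forcing $b=n^{2/3}d^{(1-l)/3}$) is exactly how the paper's Lemma \ref{lem:4} handles it, so there is no gap.
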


\begin{proof}
We begin with defining an useful variable $\theta_t = \mathbb{E}\big[\|x_{t+1}-x_t\|^2 + \|x_t-x_{t-1}\|^2 + \frac{d}{b n}\sum^n_{i=1} (\|x_t-z^t_i\|^2 + \|x_{t-1}-z^{t-1}_i\|^2)
 + \sum_{j=1}^k \|y_j^{t}-y_j^{t+1}\|^2 \big]$.
By the optimal condition of the step 6 in Algorithm \ref{alg:2}, we have, for all $i\in [k]$
\begin{align} \label{eq:A79}
  \mathbb{E}\big[\mbox{dist}(0,\partial_{y_j} L(x,y_{[k]},\lambda))^2\big]_{t+1} & = \mathbb{E}\big[\mbox{dist} (0, \partial \psi_j(y_j^{t+1})-B_j^T\lambda_{t+1})^2\big] \nonumber \\
 & = \|B_j^T\lambda_t -\rho B_j^T(Ax_t + \sum_{i=1}^jB_iy_i^{t+1} + \sum_{i=j+1}^k B_iy_i^{t} -c) - H_j(y_j^{t+1}-y_j^{t}) -B_j^T\lambda_{t+1}\|^2 \nonumber \\
 & = \|\rho B_j^TA(x_{t+1}-x_{t}) + \rho B_j^T \sum_{i=j+1}^k B_i (y_i^{t+1}-y_i^{t})- H_j(y_j^{t+1}-y_j^{t}) \|^2 \nonumber \\
 & \leq k\rho^2\sigma^{B_j}_{\max}\sigma^A_{\max}\|x_{t+1}-x_t\|^2 + k\rho^2\sigma^{B_j}_{\max}\sum_{i=j+1}^k \sigma^{B_i}_{\max}\|y_i^{t+1}-y_i^{t}\|^2 \nonumber \\
 & \quad + k\sigma^2_{\max}(H_j)\|y_j^{t+1}-y_j^{t}\|^2\nonumber \\
 & \leq k\big(\rho^2\sigma^B_{\max}\sigma^A_{\max} + \rho^2(\sigma^B_{\max})^2 + \sigma^2_{\max}(H)\big) \theta_{t},
\end{align}
where the first inequality follows by the inequality $\|\sum_{i=1}^r \alpha_i\|^2 \leq r\sum_{i=1}^r \|\alpha_i\|^2$.

By the step 7 in Algorithm \ref{alg:2}, we have
\begin{align} \label{eq:A80}
 \mathbb{E}[\mbox{dist}(0,\nabla_x L(x,y_{[k]},\lambda))]_{t+1} & = \mathbb{E}\|A^T\lambda_{t+1}-\nabla f(x_{t+1})\|^2  \nonumber \\
 & = \mathbb{E}\|\hat{g}_t - \nabla f(x_{t+1}) - \frac{G}{\eta} (x_t-x_{t+1})\|^2 \nonumber \\
 & = \mathbb{E}\|\hat{g}_t - \nabla f(x_{t}) +\nabla f(x_{t})- \nabla f(x_{t+1})
  - \frac{G}{\eta}(x_t-x_{t+1})\|^2  \nonumber \\
 & \leq  \frac{6  L^2 d}{b n}\sum_{i=1}^n\|x_t-z^t_i\|^2 + 3(L^2+ \frac{\sigma^2_{\max}(G)}{\eta^2})\|x_t-x_{t+1}\|^2
 + \frac{3L^2d^2\mu^2}{2}  \nonumber \\
 & \leq \big( 6L^2+ \frac{3\sigma^2_{\max}(G)}{\eta^2} \big)\theta_{t} + \frac{3L^2d^2\mu^2}{2}.
\end{align}

By the step 8 of Algorithm \ref{alg:2}, we have
\begin{align} \label{eq:A81}
 \mathbb{E}[\mbox{dist}(0,\nabla_{\lambda} L(x,y_{[k]},\lambda))]_{t+1} & = \mathbb{E}\|Ax_{t+1}+\sum_{j=1}^kB_jy_j^{t+1}-c\|^2 \nonumber \\
 &= \frac{1}{\rho^2} \mathbb{E} \|\lambda_{t+1}-\lambda_t\|^2  \nonumber \\
 & \leq \frac{18L^2d }{\sigma^A_{\min}\rho^2 b n} \sum_{i=1}^n\big( \|x_t - z^t_i\|^2
  + \|x_{t-1} - z^{t-1}_i\|^2\big) + \frac{3\sigma^2_{\max}(G)}{\sigma^A_{\min}\eta^2\rho^2}\|x_{t+1}-x_t\|^2 \nonumber \\
 & \quad +(\frac{3\sigma^2_{\max}(G)}{\sigma^A_{\min}\eta^2\rho^2} + \frac{9L^2)}{\sigma^A_{\min}\rho^2})\|x_{t}-x_{t-1}\|^2
  + \frac{9L^2d\mu^2}{\sigma^A_{\min}\rho^2}\nonumber \\
 & \leq \big( \frac{18L^2 }{\sigma^A_{\min}\rho^2}
 + \frac{3\sigma^2_{\max}(G)}{\sigma^A_{\min}\eta^2\rho^2} \big)\theta_{t} + \frac{9L^2d^2\mu^2}{\sigma^A_{\min}\rho^2}, \nonumber \\
\end{align}
where the first inequality holds by Lemma \ref{lem:3}.

Next, combining the above inequalities \eqref{eq:A79}, \eqref{eq:A80} and \eqref{eq:A81}, we have
\begin{align}
\min_{1\leq t \leq T} \mathbb{E}\big[ \mbox{dist}(0,\partial L(x_t,y_{[k]}^{t},\lambda_t))^2\big] &\leq \frac{1}{T}\sum_{t=1}^T\mathbb{E}\big[ \mbox{dist}(0,\partial L(x_t,y_{[k]}^{t},\lambda_t))^2\big] \nonumber \\
& \leq \frac{\nu_{\max}}{T} \sum_{t=1}^T \theta_t + \frac{3L^2 d^2 \mu^2}{2} + \frac{9L^2d^2\mu^2}{\sigma^A_{\min}\rho^2} \nonumber \\
& \leq \frac{2\nu_{\max}(\Omega_0 - \Omega^*)}{\gamma T}  + \frac{18\nu_{\max}L^2d^2\mu^2}{ \gamma \sigma^A_{\min}\rho} + \frac{\nu_{\max}L d^2 \mu^2}{2\gamma}
+ \frac{3L^2 d^2 \mu^2}{2} + \frac{9L^2d^2\mu^2}{\sigma^A_{\min}\rho^2} \nonumber \\
& = \frac{2\nu_{\max}(\Omega_0 - \Omega^*)}{\gamma T}  + \big( \frac{18\nu_{\max}L}{ \gamma \sigma^A_{\min}\rho} + \frac{\nu_{\max}}{2\gamma}
+ \frac{3L}{2} + \frac{9L}{\sigma^A_{\min}\rho^2} \big) Ld^2\mu^2\nonumber \\
\end{align}
where the third inequality holds by Lemma \ref{lem:4}, and
 $\nu_{\max}= \max(\nu_1,\nu_2,\nu_3)$, $\gamma = \min(\sigma_{\min}^H, \chi_{t}, L)$, and $\chi_t \geq \frac{3\sqrt{791}\kappa_Gd^l}{2\alpha}\ (l=0,0.5,1)$.

Given $\eta = \frac{\alpha\sigma_{\min}(G)}{33d^lL} \ (0 < \alpha \leq 1, \ l=0,0.5,1 )$ and $\rho = \frac{6\sqrt{791}\kappa_G Ld^l}{\sigma^A_{\min}\alpha}$,
since $k$ is relatively small, it is easy verifies that $\gamma = O(1) $ and $\nu_{\max} = O(d^{2l})$,
which are independent on $n$ and $d$.
Thus, we obtain
\begin{align}
\min_{1\leq t \leq T} \mathbb{E}\big[ \mbox{dist}(0,\partial L(x_t,y_{[k]}^{t},\lambda_t))^2\big]  \leq O(\frac{d^{2l}}{T})  + O(d^{2+2l}\mu^2).
\end{align}

\end{proof}

\end{appendices}

\end{onecolumn}

\end{document}